\DeclareSymbolFont{bbold}{U}{bbold}{m}{n}
\DeclareSymbolFontAlphabet{\mathbbold}{bbold}
\theoremstyle{plain} 
\newtheorem{thm}{Theorem}[section] 
\newtheorem{lem}[thm]{Lemma}
\newtheorem{cor}[thm]{Corollary}
\newtheorem{pro}[thm]{Proposition}
\newtheorem{eg}[thm]{Example}
\newtheorem{problem}[thm]{Problem}
\theoremstyle{definition}
\newtheorem{remark}[thm]{Remark}
\newcommand{\logspace}{\texttt{logspace}}
\newcommand{\ga}{\operatorname{ga}}
\newcommand{\up}[1]{\textup{#1}}
\newcommand{\CSP}{\operatorname{CSP}}
\newcommand{\onto}{\twoheadrightarrow}
\newcommand{\ba}{\mathbf{a}}
\newcommand{\bb}{\mathbf{b}}
\newcommand{\ha}{\hat{a}}
\newcommand{\hb}{\hat{b}}
\newcommand{\he}{\hat{e}}
\tikzset{%
element/.style={draw, shape=circle, fill=white, inner sep=1.4pt}}
\begin{document}

\title[Flexible Satisfiability]{Flexible constraint satisfiability and a problem in semigroup theory}

\author{Marcel Jackson}
\address{Department of Mathematical and Physical Sciences\\ La Trobe University\\ Victoria  3086\\
Australia} \email{M.G.Jackson@latrobe.edu.au}

\subjclass[2010]{Primary: 68Q17, 20M07; Secondary: 03C05, 08B99, 08C15}
\keywords{}
\thanks{The author was supported by ARC Discovery Project DP1094578 and ARC Future Fellowship FT120100666}

\begin{abstract}
We examine some flexible notions of constraint satisfaction, observing some relationships between model theoretic notions of universal Horn class membership and robust satisfiability.  We show the \texttt{NP}-completeness of $2$-robust positive 1-in-3SAT in order to give very small examples of finite algebras with \texttt{NP}-hard variety membership problem.  In particular we give a $3$-element algebra with this property, and solve a widely stated problem by showing that the $6$-element Brandt monoid has \texttt{NP}-hard variety membership problem.  These are the smallest possible sizes for a general algebra and a semigroup to exhibit \texttt{NP}-hardness for the membership problem of finite algebras in finitely generated varieties.
\end{abstract}

\maketitle


There are a number computational situations where the task is not to find if there is a single solution to an instance, but rather to find whether there is a sufficiently broad family of solutions to witness various separation conditions on variables.  The following example is illustrative of idea.  

Recall that a \emph{$3$-colouring} of a graph $\mathbb{G}$ is a homomorphism from $\mathbb{G}$ to the complete graph $\mathbb{K}_3$.  Equivalently, there is a map to the set {of ``colours''} $\{0,1,2\}$ such that adjacent vertices have different colours.  Graph colouring places no restriction on what is to happen to \emph{non-adjacent} vertices.  What happens if we ask that a graph not simply be $3$-colourable, but $3$-colourable in enough ways that there is no restriction to how any pair of nonadjacent vertices may be coloured:  can they  be coloured the same by some valid $3$-colouring, and can they be coloured differently?  This interesting combinatorial question can be rephrased in the following equally interesting way: is it true that every valid colouring of two vertices extends to a full $3$-colouring?

In the present article we look at a range of constraint problems of this kind, grouped loosely under the name  \emph{flexible satisfaction}, with specific kinds of flexible satisfaction to be given more precise definition.  The idea appears natural enough in its own right, and so it is perhaps not surprising that it might emerge from several different investigations of study.  For example, there has been extensive exploration of the emergence of computationally challenging instances in randomly generated instances of constraint problems; see Hayes \cite{hay} or Monasson et al.~\cite{MZKST} for example.  Such work points toward the emergence of hidden constraints being intimately related to computational difficulty; see Beacham and Culberson \cite{beacul} or Culberson and Gent \cite{culgen} for example.  These hidden constraints correspond exactly to non-constrained tuples being preserved as a constraint under any possible solution, or being forced to fall outside of any relation  under any solution (cf.~non-adjacent vertices being forced to be  differently coloured or forced to be like-coloured, respectively).   As a second example, the resolution of problems relating to minimal constraint problems (in the sense of Montenari~\cite{mon}), led Gottlob~\cite{got} to \emph{supersymmetric} SAT instances, which concerns precisely the problem of extendability of arbitrary partial solutions (on a bounded number of variables) to full solutions.  This is developed further in Abramsky, Gottlob and Kolaitis \cite{AGK} in the guise of \emph{robust satisfiability}, where the flexible graph $3$-colouring problem described above is shown to be \texttt{NP}-complete, and related notions are tied to a problem in quantum mechanics.  A third point of interest lies in the fundamental relationship between flexible satisfaction and universal Horn classes: the flexibly $3$-colourable graphs (in the sense described above) are precisely those in the universal Horn class generated by $\mathbb{K}_3$; see Trotta \cite{tro} for example.  

A final point of interest arises somewhat more coincidentally.  Intuitively, constraint networks that can be satisfied in very flexible ways lie at some extreme antipodal point compared to those that are not even satisfiable at all.   Yet in the course of proving \texttt{NP}-completeness, it seems necessary to use a string of reductions which leads to an interesting stretching of the landscape of instances.  It is found that there is no polynomial time boundary separating the ostensibly extreme subclasses \emph{not satisfiable} and \emph{flexibly satisfiable}.  This can already be seen in the results of Abramsky, Gottlob and Kolaitis \cite{AGK}, where after reduction, one arrives at a graph that is either not $3$-colourable at all, or which is flexibly $3$-colourable.  It is also built into the definition of supersymmetric SAT in \cite{got}.  This extreme separation of YES instances from NO instances gives the results extra applicability, which we make use of in the present article.  Aside from wider applicability, the issue is also of interest from a purely computational complexity perspective, as it is at the very least quite reminiscent of the currently open problem as to whether or not there are two disjoint \texttt{NP} languages that are inseparable by a polynomial time boundary (see \cite{FLM, GSSZ,grosel}, for example), and is a natural example of a promise problem (see the survey by Goldreich \cite{gol} for example).  {Interest in promise problems variants of constraint problems has grown substantially in recent years, stemming from  the  \emph{promise CSP} framework introduced by Brakensiek and Guruswami \cite{bragur}.  The promise problem contributions in the present article appear to be somewhat orthogonal to those of \cite{bragur}, but they are perhaps closer to cousins than total strangers, and we able to make use of one of the recent developments in promise CSP to prove Proposition~\ref{pro:highercolouring}.}

A key motivation for the present article comes from computational problems arising in the theory of varieties of algebras.  Recall that the \emph{variety} {generated by} an algebra $\mathbf{A}$ is the class $\mathsf{V}(\mathbf{A})$ of algebras $\mathbf{B}$ of the same signature as $\mathbf{A}$ and which satisfy all identities true on $\mathbf{A}$.  Equivalently, $\mathbf{B}$ is in the variety generated by $\mathbf{A}$ if and only if it is a homomorphic image of a subalgebra of a direct power of $\mathbf{A}$.  The pseudovariety generated by $\mathbf{A}$ is the finite part of the variety generated by $\mathbf{A}$, though in general a \emph{pseudovariety} will be any class of finite algebras of the same signature and which is closed under taking homomorphic images, subalgebras and finitary direct products.  Not every pseudovariety is the finite part of a variety, so the pseudovariety setting is a more encompassing setting for considering finite membership problems of finite algebras.  

Bergman and Slutzki \cite{berslu} gave a $2$-\texttt{exptime} algorithm for deciding membership of~$\mathbf{B}$ in the pseudovariety {generated by} $\mathbf{A}$ (even if both algebras are allowed to vary), while Kozik~\cite{koz09} showed that even if $\mathbf{A}$ is fixed, this is best possible in general: there is a finite algebra with $2$-\texttt{exptime}-complete finite membership problem for its pseudovariety.  Other examples with nontractable finite membership problem (subject to usual complexity theoretic assumptions such as $\texttt{P}\neq \texttt{NP}$) include the original example, due to Szekely \cite{sze}, of a 7-element (or $6$-element \cite{jacmck}) algebra with \texttt{NP}-complete membership problem, semigroups of the author and McKenzie~\cite{jacmck} with \texttt{NP}-hard membership problem (the smallest has 55 elements, though for a monoid one needs 56 elements), a finite algebra due to Kozik with \texttt{PSPACE}-complete~\cite{koz07} finite membership problem for its pseudovariety, and a family of finite semigroups recently discovered Kl\'{\i}ma, Kunc and Pol\'ak~\cite{KKP}, each with co-\texttt{NP}-complete finite membership problem for their pseudovariety (the smallest is a monoid of size 42).  

All of these examples are somewhat ad hoc.  We are able to use the hardness of various flexible satisfaction ideas---and in particular, the extreme separation properties that arise---to give an interval in the lattice of semigroup pseudovarieties in which every pseudovariety has \texttt{NP}-hard finite membership problem.  At the base of this interval is the pseudovariety generated by the ubiquitous six element Brandt monoid ${\bf B}_2^1$.  The particular case of ${\bf B}_2^1$ solves---subject to the assumption $\texttt{P}\neq \texttt{NP}$---Problem~4 of Almeida \cite[p.~441]{alm} and Problem~3.11 of Kharlampovich and Sapir~\cite{khasap}; see also Volkov, Gol$'$dberg and Kublanovski\u{\i} \cite[p.~849]{VGK} (English version).   Also, because every semigroup of order less than $6$ has a finite identity basis (hence tractable membership problem for its pseudovariety), it shows that the smallest size of generator for a pseudovariety with computationally hard membership is~$6$.  For general algebras, we show that there is a $3$-element algebra with \texttt{NP}-complete membership problem for its pseudovariety, also the smallest possible size; we give a $4$-element groupoid (that is, an algebra with a single binary operation) with the same property.  

\subsection*{Results and structure}
The structure of the article is as follows.  In Part~\ref{part:1} we give preliminary discussion of basic concepts relating to constraint satisfaction problems, and relate various flexible notions of satisfaction to model-theoretic classes such as universal Horn classes.  Results here are mostly basic observations, but nevertheless of some interest.  Theorem \ref{thm:separation} identifies unfrozenness of relations to the structure of Horn clauses defining the class; several examples are given, including Example~\ref{eg:K3=} which shows that the not-necessarily induced subgraphs of direct powers of the complete graph $\mathbb{K}_3$ are those $3$-colourable graphs in which nonadjacent vertices can be distinctly coloured.  Proposition \ref{pro:core} gives a \logspace\ relationship between CSP complexity and the complexity of deciding unfrozenness and frozenness of relations in the case of core structures; the relationship is shown to fail for non-core structures in Example \ref{eg:noncore}.

Part \ref{part:sat} contains the most technical sections of the article.  We build off the work of Gottlob \cite{got} to establish a series of results about robust satisfiability, culminating in the \texttt{NP}-completeness of 2-robust positive 1-in-3SAT (Theorem \ref{thm:SP2}).  
The result also shows that the universal Horn class generated by the $2$-element relational template for positive 1-in-3SAT (which consists of the single ternary relation $\{(1,0,0),(0,1,0),(0,0,1)\}$ on the set $\{0,1\}$) has \texttt{NP}-complete membership problem.  
In fact, as alluded to above, it shows that every class of 1-in-3 satisfiable instances that contains the finite members of this universal Horn class has \texttt{NP}-hard membership problem.  
Along the way, we prove the {\texttt{NP}-completeness of $3$-robust NAE3SAT (Theorem \ref{thm:3robNAE}), and reprove the known \texttt{NP}-completeness of deciding flexible $3$-colourability (Theorem \ref{thm:G3C}).  
This last theorem is proved for graphs in which every edge lies within a triangle, a nontrivial extension of the previous proof in~\cite{AGK}, and we are also able to demonstrate} that the flexibility of colouring extends slightly further to a kind of edge flexibility: for any two edges, we show that aside from a few obvious special cases where the range of colours is limited, we can otherwise  always use  $2$ colours in total amongst the two edges, and also $3$ colours; 
see Proposition \ref{pro:edgerobust}.  The main difficulties of the proof relate to the triangulation of edges, however this is the detail we require for a reduction into positive 1-in-3SAT.

Part \ref{part:varieties} contains applications of the results in Part \ref{part:sat} to hardness of finite membership problems for pseudovarieties.  
In Section \ref{sec:B21} we consider the pseudovariety $\mathsf{V}_{\rm fin}({\bf B}_2^1)$ generated by the six-element Brandt monoid $\mathbf{B}_2^1$ and show in Theorem \ref{thm:LDS} that any pseudovariety containing $\mathsf{V}_{\rm fin}({\bf B}_2^1)$ and contained within the join of $\mathsf{V}_{\rm fin}({\bf B}_2^1)$ with $\mathsf{L}\mathbf{DS}$ has \texttt{NP}-hard finite membership problem (with respect to \logspace\ reductions).  Note that $\mathsf{L}\mathbf{DS}$ is the largest pseudovariety omitting ${\bf B}_2^1$.  
As a corollary we deduce that adding an identity element to any finite completely 0-simple semigroup whose sandwich matrix is block-diagonal produces a semigroup (or monoid) whose pseudovariety has \texttt{NP}-hard finite membership problem; Corollary~\ref{cor:orthodoxlike}.  
In Section~\ref{sec:small} we explore results beyond the monoid and semigroup signatures. We deduce some easier consequences of the flexible satisfaction results by giving a $3$-element algebra and $4$-element groupoid with \texttt{NP}-complete and \texttt{NP}-hard finite membership problem (respectively) for their pseudovarieties; see Corollary~\ref{cor:small}. 
We then consider expansions of ${\bf B}_2^1$ to the inverse semigroup signature and the semiring signature.
\begin{remark}
A number of developments have occurred since the submission of the article, some of which depend on the key results contained here.  
Ham \cite{ham:SATconf,ham:SAT} used Theorem \ref{thm:SP2} below (from arXiv version) to provide a full classification of all constraint problems on 2-elements for which a version of $2$-robust satisfiability is intractable and initiated the algebraic method for both robust satisfiability and universal Horn class membership.  
Following this, the author and Ham revisited the central approach of Part \ref{part:sat} below and showed how to obtain a complete classification of tractability and intractability for robust satisfiability on core templates, as well as of tractability and intractability for universal Horn class membership on core templates \cite{hamjac}.  
In progress work of the author with Barto and Ham will extend this to a complete classification of finite relational structures with tractable universal Horn class membership problem.  
While those developments are in a sense infinitely more powerful than the results obtained in the present article, they do not supersede the results here due to the slightly weakened definitions used in \cite{hamjac}; see discussion after Lemma \ref{lem:folklore} below as well as after Corollary \ref{cor:uHK3}.  
The tighter definitions used in the present results enable computational reductions to instances with a high level of local uniformity, which is very useful in practice and important for the main results of Part \ref{part:varieties}.  

Further developments depending on the present paper can be found in Jackson and Zhang~\cite{jaczha} and Jackson, Ren and Zhao~\cite{JRZ}, which will be discussed in more detail at appropriate points of Part~\ref{part:varieties}.
\end{remark}
\part{Preliminaries: constraints, satisfaction and separation}\label{part:1}

\section{Constraint Satisfaction Problems.}\label{sec:CSPintro}
In this article we consider constraint problems over fixed finite relational templates only.  A \emph{template} $\mathbb{A}$ consists of a finite set $A$ (the \emph{domain}) endowed with a finite family $R_1^\mathbb{A},\dots,R_n^\mathbb{A}$ of relations on $A$, each of some finite arity.  The family of symbols $R_1,\dots,R_n$ and associated arities is the \emph{vocabulary}, or (relational) \emph{signature} of $\mathbb{A}$, so that we can equivalently think of the template $\mathbb{A}$ as a relational structure $\mathbb{A}=\langle A; R_1^\mathbb{A},\dots,R_n^\mathbb{A}\rangle$. 

Two key examples of interest in the present article are the template for graph $3$-colourability, which consists of the simple graph 
\[
\mathbb{K}_3=\langle\{0,1,2\};\neq\rangle
\]
which has a single relation, of arity $2$,
 and the two-element template for what is {commonly called} positive 1-in-3SAT
 \[
 \mathbbold{2}=\langle \{0,1\}; \{(0,0,1),(0,1,0),(1,0,0)\}\rangle
 \]
 which has a single relation, of arity $3$.

The \emph{constraint satisfaction problem} $\CSP(\mathbb{A})$ over $\mathbb{A}$ is the computational problem that takes as an instance a set of variables $x_1,\dots,x_m$ and a set of constrained tuples: expressions of the form $(x_{i_1},\dots,x_{i_k})\in R$, where $R$ is a symbol of arity $k$ from the vocabulary of $\mathbb{A}$.  Such an instance is a YES instance of $\CSP(\mathbb{A})$ if there is an assignment $\phi\colon\{x_1,\dots,x_m\}\to A$ satisfying all of the constraints of the instance: for each constraint $(x_{i_1},\dots,x_{i_k})\in {R}$, we must have $(\phi(x_{i_1}),\dots,\phi(x_{i_k}))\in {R}^\mathbb{A}$.  Otherwise the instance is a NO instance of $\CSP(\mathbb{A})$.
 
An algebraic/model-theoretic perspective reveals that $\CSP(\mathbb{A})$ is nothing other than the  homomorphism problem for $\mathbb{A}$.  To see this, we may consider the set of variables in a CSP instance $I$ as the universe of some relational structure $\mathbb{B}$ of the same signature as $\mathbb{A}$.  For each symbol $R$ in the relational signature of $\mathbb{A}$, the relation $R^\mathbb{B}$ consists of the set of all tuples that were constrained to be $R$-related in $I$:
\[
R^\mathbb{B}:=\{(x_{i_1},\dots,x_{i_k})\mid (x_{i_1},\dots,x_{i_k})\in R\text{ is a constraint in $I$}\}.
\]
Then the notion of ${\phi}\colon\{x_1,\dots,x_m\}\to A$ satisfying the constraints of $I$ coincides with the definition of ${\phi}\colon\mathbb{B}\to\mathbb{A}$ being a homomorphism.  Conversely, any finite relational structure $\mathbb{B}$ of the same type as $\mathbb{A}$ gives a CSP instance by calling the elements of the universe of $\mathbb{B}$ ``variables'' and taking the list of elements of each fundamental relation of $\mathbb{B}$ as the list of constraints.  We often use the notation $\mathbb{B}\in\CSP(\mathbb{A})$ to denote the situation where $\mathbb{B}$ is a YES instance of the computational problem $\CSP(\mathbb{A})$.

\section{Separation and unfrozen relations}\label{sec:sep}
Let $\mathcal{R}$ be a relational signature and $R\in \mathcal{R}\cup\{=\}$.  For a template $\mathbb{A}$ of signature~$\mathcal{R}$, we will say that an $\mathcal{R}$-structure $\mathbb{B}$ satisfies the \emph{$R$-separation condition} with respect to $\mathbb{A}$ if for each $(b_1,\dots,b_k)\notin R^\mathbb{B}$, there is a satisfying assignment $\nu$ (that is, a homomorphism $\nu:\mathbb{B}\to\mathbb{A}$) with $(\nu(b_1),\dots,\nu(b_k))\notin R^\mathbb{A}$, where of course $=$ is to be interpreted as equality on both $\mathbb{A}$ and $\mathbb{B}$.  

Following  Beacham \cite{bea}, we may say that a tuple $(b_1,\dots,b_k)$ is \emph{frozen-in} {to a $k$-ary relation $R$} (with respect to  $\mathbb{A}$) if $(b_1,\dots,b_k)\notin R^\mathbb{B}$ and the {$R$-}separation condition fails at $(b_1,\dots,b_k)$.     We say that the relation $R^\mathbb{B}$ is \emph{unfrozen-in} (with respect to $\mathbb{A}$) if no tuple outside of $R^\mathbb{B}$ is frozen{-}in.  The reader will note that a relation $R$ is unfrozen-in if and only if the $R$-separation condition holds.
As we now see, {this frozenness condition is} closely related to familiar concepts in model theory and universal algebra.  The following definitions can be found Burris and Sankappanavar \cite{bursan} for example; a more detailed treatment is Gorbunov \cite{gor}.  

A \emph{Horn clause} in a first order language (with equality) is a disjunction 
\begin{equation}
\phi_1\vee\phi_2\vee\dots\vee \phi_k\label{eqn:Horn}
\end{equation}
in which each of $\phi_1,\dots,\phi_k$ is either atomic or  negated-atomic, but at most one disjunct $\phi_i$ is not negated.  A first order sentence in this language is a \emph{universal Horn sentence} if it is a universally quantified Horn clause.  (Technically one might take universally quantified conjunctions of Horn clauses, but these are logically equivalent to a finite set of sentences of the form we have described.)

When \emph{exactly one} disjunct is not negated (say, $\phi_k$), the Horn clause \eqref{eqn:Horn} can be written as the implication 
\begin{equation}
(\neg \phi_1\And\dots\And \neg\phi_{k-1})\rightarrow \phi_k\label{eqn:qeqn}
\end{equation}
(of course as $\phi_1,\dots,\phi_{k-1}$ were already negated by assumption, the premise of \eqref{eqn:qeqn} is really just a conjunction of atomic formul{\ae}).  Universally quantified expressions of this form are often called \emph{quasi-equations}, or \emph{quasi-identities}.  
Universal Horn sentences in which \emph{all} disjuncts are negated are often called \emph{anti-identities}, though these are also easily seen to be just the negations of primitive-positive sentences (existentially quantified conjunctions of atomic formul{\ae}). {Atomic formul{\ae} in relational signatures are frequently not equalities, however we follow \cite{gor} and continue to use the words ``equation'' and ``identity'' in these definitions.}

The following theorem is a basic synthesis of a number of commonly encountered conditions.  When $k=0$ (meaning $\{R_1,\dots,R_k\}=\varnothing$), then Theorem \ref{thm:separation} gives the usual syntactic characterisation for the antivariety of a finite structure.  When $\{R_1,\dots,R_k\}$ is $\mathcal{R}\cup\{=\}$, the theorem gives a very widely used series of equivalent conditions for membership in the universal Horn class generated by a finite structure.  When $k=1$ and $R_1$ is the relation $=$, then $\up{({ii})}\Leftrightarrow \up{({iii})}$ of the theorem is essentially Corollary~8 of Stronkowski \cite{str}.  (When $\mathbb{A}$ is infinite, or if we consider an infinite set of structures instead of a single $\mathbb{A}$, then one needs to also account for ultraproduct closure, but there is an obvious extension of statements {(i)} and {(iii)} that holds, with standard adjustments to the proof.)

\begin{thm}\label{thm:separation}
Let $\mathbb{A}$ be a finite relational structure and $\mathbb{B}$ a relational structure of the same finite signature $\mathcal{R}$.  Let $\{R_1,\dots,R_k\}$ be a subset of $\mathcal{R}\cup\{{=}\}$.  The following are equivalent.
\begin{enumerate}
\item[\textup{({i})}] {There is at least one homomorphism from $\mathbb{B}$ into $\mathbb{A}$ and $\mathbb{B}$} satisfies the separation condition for each of $R_1,\dots,R_k$.
\item[\textup{({ii})}] $\mathbb{B}$ satisfies all universal Horn sentences satisfied by $\mathbb{A}$ for which any non-negated disjunct involves relations only from $R_1,\dots,R_k$.
\item[\textup{({iii})}] There is a homomorphism from $\mathbb{B}$ into a  nonzero direct power of $\mathbb{A}$ that preserves $\{\neg R_1,\dots,\neg R_k\}$.
\end{enumerate}
\end{thm}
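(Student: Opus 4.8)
The plan is to prove the cycle of implications $(1)\Rightarrow(2)\Rightarrow(3)\Rightarrow(1)$, with the two ``outer'' implications being routine and the bulk of the work concentrated in $(2)\Rightarrow(3)$. Throughout, the single structural fact doing the heavy lifting is that relations in a direct power are computed coordinatewise: for a tuple $\bar b$ and a relation symbol $R$, we have $\bar b\in R^{\mathbb{A}^n}$ if and only if the projection of $\bar b$ to coordinate $i$ lies in $R^\mathbb{A}$ for every $i$; equivalently $\bar b\notin R^{\mathbb{A}^n}$ exactly when some coordinate projection falls outside $R^\mathbb{A}$. This is precisely what links ``separation'' to ``preservation of $\neg R$'' under maps into powers. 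For the conversion between (2) and the other conditions I would use the canonical positive-primitive description of $\mathbb{B}$ (its ``diagram'' or ``canonical query''): introduce one variable $x_b$ for each element $b$ of $\mathbb{B}$ and let $\Phi$ be the conjunction of all atomic facts $R(x_{c_1},\dots,x_{c_j})$ for which $(c_1,\dots,c_j)\in R^\mathbb{B}$. For finite $\mathbb{B}$ this is a genuine first-order formula, and an assignment $x_b\mapsto a_b$ satisfies $\Phi$ in $\mathbb{A}$ exactly when $b\mapsto a_b$ is a homomorphism $\mathbb{B}\to\mathbb{A}$.

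For $(1)\Rightarrow(2)$, let $\sigma$ be a universal Horn sentence true in $\mathbb{A}$ whose positive disjunct (if present) involves a relation from $R_1,\dots,R_k$, and suppose some assignment $\bar b$ of elements of $\mathbb{B}$ makes all negated atoms of $\sigma$ false (so their underlying atoms hold) while the conclusion fails. If $\sigma$ is an anti-identity, take any homomorphism $\nu\colon\mathbb{B}\to\mathbb{A}$ supplied by $\mathbb{B}\in\CSP(\mathbb{A})$; since homomorphisms preserve positive atomic formul{\ae} (including equalities), $\nu(\bar b)$ witnesses that $\sigma$ fails in $\mathbb{A}$, a contradiction. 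If $\sigma$ has a positive conclusion $R_j(\dots)$ with $R_j\in\{R_1,\dots,R_k\}$, then the relevant subtuple lies outside $R_j^\mathbb{B}$, so the separation (unfrozen-in) condition for $R_j$ yields a homomorphism $\nu$ with that subtuple's image outside $R_j^\mathbb{A}$; as $\nu$ still preserves the premises, again $\sigma$ fails in $\mathbb{A}$. For the easy direction $(3)\Rightarrow(1)$, compose the given homomorphism $h\colon\mathbb{B}\to\mathbb{A}^n$ with the coordinate projections: each $\pi_i\circ h$ is a homomorphism $\mathbb{B}\to\mathbb{A}$, so $\mathbb{B}\in\CSP(\mathbb{A})$, and the coordinatewise description of $R_j^{\mathbb{A}^n}$ together with the assumed preservation of $\neg R_j$ turns each $\bar b\notin R_j^\mathbb{B}$ into a coordinate witnessing separation.

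The main work is $(2)\Rightarrow(3)$, which I would route through the canonical description $\Phi$. First, the anti-identity $\neg\exists\bar x\,\Phi$ is allowed in (2) (it has no positive disjunct); were there no homomorphism $\mathbb{B}\to\mathbb{A}$, this anti-identity would hold in $\mathbb{A}$ yet fail in $\mathbb{B}$ under the identity assignment, contradicting (2); hence $\CSP(\mathbb{A})$ is nonempty and the target power will be nonzero. Next, fix $R_j\in\{R_1,\dots,R_k\}$ and a tuple $(b_1,\dots,b_d)\notin R_j^\mathbb{B}$, and consider the quasi-equation $\forall\bar x\,(\Phi\rightarrow R_j(x_{b_1},\dots,x_{b_d}))$, whose single positive disjunct is $R_j$ and is therefore allowed in (2). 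This sentence is false in $\mathbb{B}$ (the identity assignment satisfies $\Phi$ but not the conclusion), so by (2) it must be false in $\mathbb{A}$ as well; an assignment in $\mathbb{A}$ falsifying it is exactly a homomorphism $\nu\colon\mathbb{B}\to\mathbb{A}$ sending $(b_1,\dots,b_d)$ outside $R_j^\mathbb{A}$. Collecting one such witnessing homomorphism for each of the finitely many tuples outside each $R_j^\mathbb{B}$ (together with one arbitrary homomorphism to guarantee a nonempty index set) yields homomorphisms $\nu_1,\dots,\nu_n$, and the map $h=\langle\nu_1,\dots,\nu_n\rangle\colon\mathbb{B}\to\mathbb{A}^n$ given by $h(b)=(\nu_1(b),\dots,\nu_n(b))$ is a homomorphism into a finite nonzero power. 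By the coordinatewise computation of relations, every $\bar b\notin R_j^\mathbb{B}$ has a coordinate placing its image outside $R_j^\mathbb{A}$, so $h$ preserves each $\neg R_j$, which is (3).

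The genuine obstacle, and the only place finiteness of $\mathbb{B}$ is essential, is the passage to a \emph{finite} power in $(2)\Rightarrow(3)$: the construction assembles one coordinate per frozen-in witness, so it terminates in a finite product precisely because $\mathbb{B}$ (hence the set of tuples to be separated) is finite, and because $\Phi$ is then a single first-order sentence. For infinite $\mathbb{B}$ one would replace $\Phi$ by the (possibly infinite) atomic diagram and invoke compactness, landing in a possibly infinite power and picking up the ultraproduct closure flagged in the remark preceding the theorem; I would also take care, when $R_j$ is the equality symbol, to read ``$\bar b\notin{=}^\mathbb{B}$'' as ``$b\neq b'$'', so that the corresponding separation statement becomes injectivity of $h$ and preservation of $\neg({=})$ becomes the assertion that $h$ is an embedding, recovering the classical $\ISP$-characterisation of the quasivariety as the special case $\{R_1,\dots,R_k\}=\mathcal{R}\cup\{=\}$.
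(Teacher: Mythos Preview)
Your proposal is correct and uses the same ingredients as the paper's proof (positive diagram of $\mathbb{B}$, evaluation/product map, coordinate projections), differing only in organisation: you prove the cycle $(1)\Rightarrow(2)\Rightarrow(3)\Rightarrow(1)$, whereas the paper establishes $(1)\Leftrightarrow(3)$ and $(1)\Leftrightarrow(2)$ as two separate equivalences. In particular the paper gets $(1)\Rightarrow(3)$ in one line via the evaluation map $b\mapsto e_b\in\mathbb{A}^{\hom(\mathbb{B},\mathbb{A})}$ with $e_b(\phi)=\phi(b)$, rather than passing through (2) and the diagram; your route through $(2)\Rightarrow(3)$ effectively re-derives (1) along the way and then builds the power from the witnessing homomorphisms, which is marginally more economical in the exponent but otherwise the same idea. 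Your observation that finiteness of $\mathbb{B}$ is needed for $\Phi$ to be a genuine formula and for the product to be finite is apt and applies equally to the paper's argument (both $\operatorname{diag}^+(\mathbb{B})$ and $\hom(\mathbb{B},\mathbb{A})$ are finite only when $\mathbb{B}$ is).
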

\begin{proof}
({i})$\Rightarrow$({iii}).  
Assume ({i}) and let $\hom(\mathbb{B},\mathbb{A})$ be the family of all homomorphisms from $\mathbb{B}$ into $\mathbb{A}$; this is nonempty because there is at least one homomorphism from $\mathbb{B}$ to $\mathbb{A}$.  
There is a natural  homomorphism from $\mathbb{B}$ into $\mathbb{A}^{\hom(\mathbb{B},\mathbb{A})}$ given by identifying each $b$ with the function $e_b\colon\hom(\mathbb{B},\mathbb{A})\to \mathbb{A}$ defined by $e_b({\nu})={\nu}(b)${, for each $\nu\in\hom(\mathbb{B},\mathbb{A})$}.  
This is a homomorphism, but also it preserves $\{\neg R_1,\dots,\neg R_k\}$: for each $R\in \{R_1,\dots,R_k\}$ and $(b_1,\dots,b_\ell)\notin R^\mathbb{B}$, the separation condition of ({i}) ensures that there is ${\nu}\colon\mathbb{B}\to\mathbb{A}$ with $({\nu}(b_1),\dots,{\nu}(b_\ell))\notin R{^\mathbb{A}}$, so that $(e_{b_1},\dots,e_{b_\ell})({\nu})=({\nu}(b_1),\dots,{\nu}(b_\ell))\notin R^{\mathbb{A}}$.  Thus ({iii}) holds.

({iii})$\Rightarrow$({i}).  Assume ({iii}) holds.   Let $\mathbb{B}'$ be the image of $\mathbb{B}$ under the assumed homomorphism.  Then whenever $(b_1,\dots,b_\ell)\notin R^\mathbb{B}$ for some $R\in \{R_1,\dots,R_k\}$ we also have $(b_1,\dots,b_\ell)\notin R^{\mathbb{B}'}$, and projecting onto a coordinate $i$ in which $(b_1(i),\dots,b_\ell(i))\notin R^{\mathbb{A}}$ gives the desired homomorphism from $\mathbb{B}$ into $\mathbb{A}$ witnessing the $R$-separation condition at the (arbitrary) tuple $(b_1,\dots,b_\ell)\notin R^\mathbb{B}$.  This shows that ({i}) holds.

({ii})$\Rightarrow$({i}).  {We first prove this in the case that $\mathbb{B}$ is finite.}  Assume that ({i}) fails.   Let $\operatorname{diag}^+(\mathbb{B})$ denote positive diagram of $\mathbb{B}${: the conjunction of all atomic formul{\ae} in elements $B$ that are true in $\mathbb{B}$}.  
{The negation $\neg\operatorname{diag}^+(\mathbb{B})$ can be universally quantified to produce a sentence that is logically equivalent to an anti-identity.}  
If {there is no homomorphism from $\mathbb{B}$ to $\mathbb{A}$}, then $\mathbb{A}$ satisfies {(the universal quantification of)} $\neg\operatorname{diag}^+(\mathbb{B})$, {an anti-identity} which trivially fails on $\mathbb{B}$.  

Now assume that there is a homomorphism from $\mathbb{B}$ to $\mathbb{A}$.   
It follows  that for some $R\in \{R_1,\dots,R_k\}$, the $R$-separation condition fails at a tuple $(b_1,\dots,b_\ell)\in B^\ell\backslash R^\mathbb{B}$.  
Then $\mathbb{A}$ satisfies the quasi-identity $\operatorname{diag}^+(\mathbb{B})\rightarrow (b_1,\dots,b_\ell)\in R$ {(again implicitly universally quantified)}, while again $\mathbb{B}$ trivially fails this.  
Hence ({ii}) fails.  {Now we consider the case when $\mathbb{B}$ is infinite.  
We prove $\up{(ii)}\Rightarrow\up{(i)}$ using the fact that $\mathbb{B}$ embeds into an ultraproduct of its finite substructures.  Assume that $\mathbb{B}$ satisfies {(ii)}.  Then~(ii)  also holds on all finite substructures of $\mathbb{B}$.  Hence condition~(i) also holds on all finite substructures of $\mathbb{B}$.  
Basic model theory arguments then easily extend~(i) to any ultraproduct of these finite structures, thereby establishing~(i) for~$\mathbb{B}$.  We give a brief sketch of the details for those unfamiliar with such arguments.  
Let $I$ be the family of finite subsets of the universe $B$ and for each $i\in I$, let $\mathbb{B}_i$ denote the substructure of~$\mathbb{B}$ on $i$.  
Let $\mathscr{U}$ be an ultrafilter of the Boolean algebra of all subsets of $I$.  
Consider any set of homomorphisms $\{\phi_i\colon\mathbb{B}_i\to\mathbb{A}\mid{i\in I}\}$, one for each $\mathbb{B}_i$.
For each element $\underline{b}/\mathscr{U}$ of the ultraproduct $\prod_{i\in I}\mathbb{B}_i/\mathscr{U}$, the finiteness of $A$ ensures that there is a unique element $a$ of $A$ for which the set $\{i\in I\mid \phi_i(\underline{b}(i))=a\}$ is a member of $\mathscr{U}$.  We denote this element (which depends on $\underline{b}/\mathscr{U}$) by $\phi(\underline{b}/\mathscr{U})$; this function $\phi$ is a homomorphism $\phi\colon\prod_{i\in I}\mathbb{B}_i/\mathscr{U}\to \mathbb{A}$ so that at least one such homomorphism exists.  
If the $R$-separation condition holds for some relation~$R$ on all of the $\mathbb{B}_i$, then for any tuple $(\underline{b}_1/\mathscr{U},\dots,\underline{b}_n/\mathscr{U})$ that falls outside of $R$ in the ultraproduct $\prod_{i\in I}\mathbb{B}_i/\mathscr{U}$, we may select a set $\{\phi_i\colon\mathbb{B}_i\to\mathbb{A}\mid i\in I\}$ such that 
\[
\{i\in I\mid (\phi_i(\underline{b}_1(i)),\dots,\phi_i(\underline{b}_j(i)))\notin R^{\mathbb{B}_i}\}\in \mathscr{U}.
\]  
The homomorphism $\phi:\prod_{i\in I}\mathbb{B}_i/\mathscr{U}\to\mathbb{A}$ determined by this set then witnesses the $R$-separation condition for  $(\underline{b}_1/\mathscr{U},\dots,\underline{b}_n/\mathscr{U})$ with respect to $\mathbb{A}$.
}

({i})$\Rightarrow$({ii}).  Finally, assume that ({ii}) fails for some universal Horn sentence $\psi$.  Let $x_1,\dots,x_n$ be the variables appearing in $\psi$, and let $b_1,\dots,b_n$ be elements of~$B$ such that $\mathbb{B}$ fails $\psi$ at the substitution $x_i\mapsto b_i$.  
If $\psi$ is an anti-identity, then there is no homomorphism from $\mathbb{B}$ into $\mathbb{A}$ {because the substitution $x_i\mapsto b_i$ can be followed by such a homomorphism $\nu\colon\mathbb{B}\to\mathbb{A}$ to yield a contradictory failure of the anti-identity $\psi$ in $\mathbb{A}$}.  
Now assume that one disjunct in $\psi$ is an atomic formula $(x_{i_1},\dots,x_{i_\ell})\in R$ (with $R$ in $\{R_1,\dots,R_k\}$), so that $\psi$ can be written as a quasi-identity, with conclusion $(x_{i_1},\dots,x_{i_\ell})\in R$.  So, as $\psi$ fails at $b_1,\dots,b_n$, we have that $(b_{i_1},\dots,b_{i_\ell})\notin R^\mathbb{B}$.  {We show that the $R$-separation condition fails here.}  Let ${\nu}\colon\mathbb{B}\to\mathbb{A}$ be an arbitrary homomorphism.  
Then ${\nu}$ yields a satisfying interpretation of the premise of $\psi$ in $\mathbb{A}$ by giving each $x_i$ the value ${\nu}(b_i)$.  As $\mathbb{A}\models \psi$, we have that $({\nu}(b_{i_1}),\dots,{\nu}(b_{i_\ell}))\in R^\mathbb{A}$.   Hence the $R$-separation condition fails, as required. 
\end{proof}
When $\{R_1,\dots,R_k\}$ is $\mathcal{R}\cup\{=\}$ in Theorem \ref{thm:separation}, part ({iii}) states that $\mathbb{B}$ is isomorphic to an induced substructure of a direct power of $\mathbb{A}$.  The class of induced substructures of finite direct powers of $\mathbb{A}$ is denoted by $\mathsf{SP}(\mathbb{A})$, where $\mathsf{S}$ is the class operator returning isomorphic copies of induced substructures, and $\mathsf{P}$ is the class operator returning nonempty-index set direct powers of $\mathbb{A}$.  {We use $\mathsf{P}_{\rm fin}$ to denote finite direct powers.}
{When $k=1$ and $R_1$ is the  relation of equality, item~(iii) states that there is an injective homomorphism into a direct power of $\mathbb{A}$.  We give an explicit example of this in the case of graphs, which we here consider as relational structures in the signature of a single binary relation that is symmetric (in other words, directed graphs with symmetric edge relation).  The one-element looped graph will be referred to as the \emph{trivial graph}.}
\begin{eg}\label{eg:K3=}
The class of graphs arising a{s}  \up(not necessarily induced\up) subgraphs of  direct powers of the complete graph $\mathbb{K}_3$ {coincides with} the class of {nontrivial graphs} satisfying the separation condition for $=$.  Equivalently, an \up(undirected\up) graph is {isomorphic to} a subgraph of a nonempty direct power of $\mathbb{K}_3$ if and only if it is nontrivial, and every pair of {distinct}  vertices can be coloured distinctly.
\end{eg}
\begin{proof}
These statements are simply $\textup{(i)}\Leftrightarrow\textup{(iii)}$ of Theorem \ref{thm:separation} applied to $\mathbb{A}:=\mathbb{K}_3$ and with $k=1$ and  $R_1$ chosen to be $=$.
The input structure $\mathbb{B}$ is restricted to be a graph, so that an injective homomorphism from $\mathbb{B}$ into a nonempty direct power of $\mathbb{K}_3$ means that $\mathbb{B}$ is isomorphic to a (not necessarily induced) subgraph of a nonempty direct power of $\mathbb{K}_3$.  Note that the separation condition for $=$ with respect to $\mathbb{K}_3$ is exactly the property that distinct vertices can be distinctly coloured. 
\end{proof} 
{The following example provides} structures achieving the ``defrosting'' of relations in the graph $\mathbb{K}_3$.  
\begin{figure}
\begin{tikzpicture}
\node at (0,0) {
\begin{tikzpicture}
\node (0) [draw,thick,circle,inner sep=1mm, fill = white] at (0,0) {};
\node (1) [draw,thick,circle,inner sep=1mm, fill = white] at (1,0) {};
\node (2) [draw,thick,circle,inner sep=1mm, fill = white] at (2,0) {};
\node (3) [draw,thick,circle,inner sep=1mm, fill = white] at (.5,1) {};
\node (4) [draw,thick,circle,inner sep=1mm, fill = white] at (1.5,1) {};

\draw (0) -- (1) -- (2) -- (4) -- (3) -- (1) -- (4);
\draw (0) -- (3);
\node at (1,-.5) {$\mathbb{C}_3$};
\end{tikzpicture}};
\node at (3,0) {\begin{tikzpicture}
\node (0) [draw,thick,circle,inner sep=1mm, fill = white] at (0,0) {};
\node (1) [draw,thick,circle,inner sep=1mm, fill = white] at (1,0) {};
\node (2) [draw,thick,circle,inner sep=1mm, fill = white] at (.5,1) {};
\node (3) [draw,thick,circle,inner sep=1mm, fill = white] at (1.5,1) {};

\draw (0) -- (1) -- (2) -- (3) -- (1);
\draw (0) -- (2);
\node at (0.75,-.5) {$\mathbb{D}_3$};
\end{tikzpicture}};

\node at (5.5,0) {\begin{tikzpicture}
\node (0) [draw,thick,circle,inner sep=1mm, fill = white] at (0,0) {};
\node (1) [draw,thick,circle,inner sep=1mm, fill = white] at (1,0) {};
\node (2) [draw,thick,circle,inner sep=1mm, fill = white] at (.5,1) {};

\draw (0) -- (1) -- (2) -- (0);
\node at (0.5,-.5) {$\mathbb{K}_3$};\end{tikzpicture}};

\end{tikzpicture}
\caption{Three finite graphs generating universal Horn classes with various stages of unfrozenness relative to homomorphisms into~$\mathbb{K}_3$; see Example \ref{eg:K3}.}\label{fig:K3}
\end{figure}
\begin{eg}\label{eg:K3}
The examples concern the three graphs shown in Figure \ref{fig:K3}.
\begin{enumerate}
\item[{\textup{(i)}}] $3$-colourability.  The class of $3$-colourable graphs coincides with class of \up(isomorphic copies of\up) induced subgraphs of direct powers of the graph $\mathbb{C}_3$.    
\item[{\up{(ii)}}] Edge-unfrozen-in $3$-colourability.  The class of  graphs that {have no loops and} satisfy,  relative to $\mathbb{K}_3$, the separation property for their edge relation coincides with the class of isomorphic copies of induced subgraphs of direct powers of the graph $\mathbb{D}_3$.  
\item[{\up{(iii)}}] Edge- and equality-unfrozen-in $3$-colourability.  The class of graphs that {are nontrivial and} satisfy, relative to $\mathbb{K}_3$, the separation property for both their edge relation and equality coincides with the class of isomorphic copies of induced subgraphs of direct powers of $\mathbb{K}_3$.
\end{enumerate}
\end{eg}
\begin{proof}
Part ({i}) is due to Ne\v{s}et\v{r}il and Pultr \cite{nespul}, though there is a similar construction also in Wheeler \cite{whe}.  Part ({iii}) can be found in Trotta \cite[Lemma 4.1]{tro}.  There are obvious extensions to $k$-colourability given in each of these articles.  For part~({ii}), {let the vertices of {$\mathbb{D}_3$} be numbered clockwise $0,1,2,3$ in the drawing of Figure \ref{fig:K3} so that vertices 0 and 2 are of degree $2$}.  Now let $\mathbb{G}$ be a graph whose edge relation is unfrozen-in (relative to $\mathbb{K}_3$) {and which has no loops}.   Then the edge relation remains unfrozen-in with respect to ${\mathbb{D}_3}$ because $\mathbb{K}_3$ is an induced subgraph of ${\mathbb{D}_3}$.  We claim that equality {on $\mathbb{G}$} is also unfrozen-in with respect to ${\mathbb{D}_3}$.  {To prove this, let} $u\neq v$ {be} distinct vertices of $\mathbb{G}$.  {Because there are no loops, there is at least one non-adjacency, so the edge-separation condition implies that there is at least one $3$-colouring of $\mathbb{G}$.  Let $\nu\colon\mathbb{G}\to \mathbb{K}_3$ be such a $3$-colouring.}  If $\nu(u)\neq\nu(v)$ we are done because $\mathbb{K}_3$ is an induced subgraph of ${\mathbb{D}_3}$.  Otherwise, if $\nu(u)=\nu(v)$, then we can assume without loss of generality that $\nu(u)=\nu(v)=0$.  
Define a new map from $\mathbb{G}$ into ${\mathbb{D}_3}$ by $\nu'(w):=\nu(w)$ for all vertices $w$, except for $v$ where we define $\nu'(v)=3$.  This is a homomorphism {satisfying} $\nu'(u)\neq\nu'(v)$.  So, by Theorem \ref{thm:separation} it follows that $\mathbb{G}$ is isomorphic to an induced subgraph of a power of ${\mathbb{D}_3}$.

{For the converse, let $\mathbb{G}$ be an induced subgraph of a power of $\mathbb{D}_3$.  Note that the induced subgraph of $\mathbb{D}_3$ on $\{0,1,2\}$ is isomorphic to $\mathbb{K}_3$ and that $\mathbb{D}$ retracts onto this subgraph.  As $\mathbb{G}\in\mathsf{SP}(\mathbb{D}_3)$ we may follow any projection from $\mathbb{G}$ with this retraction to obtain a homomorphism from $\mathbb{G}$ to $\mathbb{K}_3$.  To show that the edge relation of $\mathbb{G}$ is unfrozen-in with respect to $\mathbb{K}_3$,} let $u$ and $v$ be such that $\{u,v\}$ is not an edge of $\mathbb{G}$.  Then there is a projection $\nu:\mathbb{G}\to {\mathbb{D}_3}$ with $\{\nu(u),\nu(v)\}$ not an edge of ${\mathbb{D}_3}$.  Follow this by the retraction of ${\mathbb{D}_3}$ onto {the induced subgraph $\mathbb{K}_3$} on vertices $\{0,1,2\}$.  This retraction maps nonedges of ${\mathbb{D}_3}$ to nonedges of $\mathbb{K}_3$.  Thus we have a $3$-colouring of $\mathbb{G}$ in which the nonedge $\{u,v\}$ is mapped to a nonedge of~$\mathbb{K}_3$.
\end{proof} 
Deciding membership in the class of  $3$-colourable graphs with the {$=$-separation condition with respect to $\mathbb{K}_3$} was shown to be \texttt{NP}-complete by Beacham \cite{bea}, while Ham \cite{ham:SAT} used Theorem \ref{thm:1in3} of the present article to classify the \texttt{NP}-completeness of deciding the $=$-separation condition in the case of Boolean templates.  Deciding membership in the class of graphs {simultaneously satisfying both the edge and equality separation conditions with respect to $\mathbb{K}_3$} was shown to be \texttt{NP}-complete by Abramsky, Gottlob and Kolaitis~\cite{AGK}.  This property coincides with ``$2$-robust $3$-colourability'', which we {will describe in Section \ref{sec:robustsat} below.}

\begin{remark}
 {Case (i) of Example \ref{eg:K3} shows that $\CSP(\mathbb{K}_3)=\mathsf{SP}_{\rm fin}(\mathbb{C}_3)$.}  In Jackson and Trotta~\cite[Lemma~6]{jactro} it is shown that for every finite template ${\bf T}$ of finite signature, there is a finite template ${\bf T}^\sharp$ of the same signature and with 
$\CSP({\bf T})=\CSP({\bf T}^\sharp)=\mathsf{SP}_{\rm fin}({\bf T}^\sharp)$.  In other words, there is always an equivalent template ${\bf T}^\sharp$ relative to which all YES instances of $\CSP({\bf T})=\CSP({\bf T}^\sharp)$ have every relation (and equality) unfrozen-in. 
\end{remark}

\section{Robust satisfaction}\label{sec:robustsat}
The various separation and identification conditions of Section \ref{sec:sep} have a close relationship with ``robust satisfiability'' in the sense of Abramsky, Gottlob and Kolaitis \cite{AGK}.  A \emph{partial assignment} from $\mathbb{B}$ to $\mathbb{A}$ is a map~$\phi$ from some subset $S$ of the underlying universe~$B$ into~$\mathbb{A}$.  The notion of robust satisfiability concerns the ability to extend  partial assignments to satisfying {total} assignment{s}.  For this it is necessary that~$\phi$ satisfy any existing constraints inherited by the induced substructure of $\mathbb{B}$ on $S$, but also any additionally implicit constraints that might be imposed on $S$ by the larger structure $\mathbb{B}$.  In Abramsky, Gottlob and Kolaitis \cite{AGK}, a partial assignment is said to be  \emph{locally compatible} if it preserves all projections of  constraints, in addition to the constraints themselves.
Then the structure $\mathbb{B}$ is \emph{$k$-robustly satisfiable} if every locally compatible partial assignment from {every} $k$-element subset of $B$ into $\mathbb{A}$ extends to a satisfying assignment of $\mathbb{B}$ in $\mathbb{A}$.  {Note that $(k+1)$-robust satisfiability does not in general imply $k$-robust satisfiability.  We write $\leq k$-robust satisfiability to mean $\ell$-robust satisfiability for each $\ell\leq k$; see \cite[Lemma 3]{hamjac} for an alternative approach.}

{As an example, consider the template $\mathbbold{2}$ for positive 1-in-3SAT introduced in Section \ref{sec:CSPintro};  the domain is $\{0,1\}$ and the single ternary relation $R$ which in $\mathbbold{2}$ is given by $R^\mathbbold{2}:=\{(1,0,0), (0,1,0),(0,0,1)\}$.   Let $\pi_{1,2}(R)$ denote the projection of $R$ to the first two coordinates: so $\pi_{1,2}(R^\mathbbold{2})=\{(0,0),(0,1),(1,0)\}$ (there are two other such binary projections, but we focus here on $\pi_{1,2}(R)$ for simplicity).  If in an instance~$\mathbb{B}$ of 1-in-3SAT there are two elements $x,y\in B$ that lie in $\pi_{1,2}(R^\mathbb{B})$ (meaning there is some $z$ such that $(x,y,z)\in R^{\mathbb{B}}$)} then a partial truth assignment would fail to be {locally} compatible if it assigned both $x$ and $y$ the value $1$, as this lies outside of $\pi_{1,2}(R^\mathbbold{2})$.

There are natural variations of this notion of local compatibility.  Recall that a relation is \emph{primitive-positive definable} (abbreviated to \emph{pp-definable}) from a set of relations $\{R_1,\dots,R_k\}$ if it is the solution set to an existentially quantified formula built from the relations $R_1,\dots,R_k$ and conjunction.  It is easily verified that pp-definable relations must be preserved by any homomorphism preserving $\{R_1,\dots,R_k\}$.  The notion of local compatibility is an example of a pp-definition, as the projection of a $k$-ary relation $R$ to some subset  $\{i_1,\dots,i_\ell\}\subseteq \{1,\dots,k\}$ is the relation $\{(x_{i_1},\dots,x_{i_\ell})\mid \exists x_{j_1}\dots \exists x_{j_{k-\ell}}\ (x_1,\dots,x_k)\in R\}$, where $i_1<\dots<i_\ell$, $j_1<\dots<j_{k-\ell}$ and $\{j_1,\dots,j_{k-\ell}\}=\{1,\dots,k\}\backslash\{i_1,\dots,i_\ell\}$.  The following lemma is probably folklore.
\begin{lem}\label{lem:folklore}
Let $\mathbb{B}$ be an instance of $\CSP(\mathbb{A})$, and $\nu:S\to A$ be a partial assignment from some subset $S$ of $B$.  Then $\nu$ extends to a homomorphism if and only if $\nu$ preserves the restriction to $S$ of all {pp-definable relations} from the fundamental relations of $\mathbb{B}$.
\end{lem}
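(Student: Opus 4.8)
The plan is to prove the two implications separately, with the forward direction essentially free and the reverse direction hinging on a single, carefully chosen pp-definition. Throughout I use that a $\CSP(\mathbb{A})$ instance $\mathbb{B}$ has finite universe $B$, and I write $S=\{b_1,\dots,b_p\}$ and $B\setminus S=\{c_1,\dots,c_q\}$. I read the phrase ``$\nu$ preserves the restriction to $S$ of a pp-definable relation'' as follows: each such relation is presented by a pp-formula $\psi$, which determines both a relation $\psi^{\mathbb{B}}$ on $B$ and, via the same formula, a relation $\psi^{\mathbb{A}}$ on $A$; the condition asserts that every tuple drawn from $S$ and lying in $\psi^{\mathbb{B}}$ has its $\nu$-image in $\psi^{\mathbb{A}}$. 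For the forward direction I would simply invoke the fact recorded just before the lemma, that any homomorphism preserves pp-definable relations: if $\nu$ extends to $\phi\colon\mathbb{B}\to\mathbb{A}$ and $\bar{s}\in\psi^{\mathbb{B}}$ is a tuple from $S$, then $\nu(\bar{s})=\phi(\bar{s})\in\psi^{\mathbb{A}}$, which is exactly the required preservation.

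The reverse direction carries the content, and the idea is to apply the hypothesis to a single, maximally informative pp-relation: the one encoding all of $\mathbb{B}$ as seen from $S$. Concretely, I would form the positive diagram $\operatorname{diag}^+(\mathbb{B})$, regarding each element of $B$ as a variable ($b_i$ as $x_i$ and $c_j$ as $y_j$), and then existentially quantify the variables $y_1,\dots,y_q$ coming from outside $S$, obtaining a pp-formula $\psi(x_1,\dots,x_p):=\exists y_1\cdots\exists y_q\,\operatorname{diag}^+(\mathbb{B})$. By construction $(b_1,\dots,b_p)\in\psi^{\mathbb{B}}$, witnessed by setting each $y_j=c_j$, so the preservation hypothesis gives $(\nu(b_1),\dots,\nu(b_p))\in\psi^{\mathbb{A}}$. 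Unpacking the existential quantifiers then produces elements $a_1,\dots,a_q\in A$ for which the assignment $b_i\mapsto\nu(b_i)$, $c_j\mapsto a_j$ satisfies every atomic conjunct of $\operatorname{diag}^+(\mathbb{B})$; that is, it is a homomorphism $\mathbb{B}\to\mathbb{A}$, and by design it extends $\nu$.

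The only real obstacle is to be sure that $\psi$ is a legitimate pp-formula, and this is precisely where finiteness of $\mathbb{B}$ is used: because $B$ is finite, $\operatorname{diag}^+(\mathbb{B})$ is a \emph{finite} conjunction of atomic formulae, so its existential closure over $y_1,\dots,y_q$ is a genuine existentially quantified conjunction rather than an infinitary expression. One should also note that the canonical relation $\psi^{\mathbb{B}}$ is $p$-ary with all coordinates confined to $S$, so its restriction to $S$ is all of $\psi^{\mathbb{B}}$ and the hypothesis applies directly to the tuple of interest; no further bookkeeping is needed.
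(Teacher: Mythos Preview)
Your proof is correct and follows essentially the same route as the paper: both directions are handled identically, with the reverse implication established by existentially quantifying the positive diagram of $\mathbb{B}$ over $B\setminus S$ and applying the preservation hypothesis to the resulting single pp-relation at the tuple $(b_1,\dots,b_p)$. Your added remark that finiteness of $\mathbb{B}$ is what makes $\operatorname{diag}^+(\mathbb{B})$ a genuine (finite) conjunction is a worthwhile clarification not made explicit in the paper.
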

\begin{proof}
Certainly every homomorphism preserves primitive-positive definable relations.  So it suffices to show that if $\nu$ preserves every pp-definable relation then it extends to a homomorphism.
Let $s_1,\dots,s_\ell$ be an enumeration of the elements of $S$.  Now let $\delta(s_1,\dots,s_\ell)$ denote the {positive diagram $\operatorname{diag}^+(\mathbb{B})$ (so that} each element of $B$ is a variable) but with all variables formed from $B\backslash S$ existentially quantified and the variables $s_1,\dots,s_\ell$ formed from $S$ left unquantified.  Let $R$ denote the $\ell$-ary relation that is pp-defined by $\delta(s_1,\dots,s_\ell)$; obviously $(s_1,\dots,s_\ell)$ is a tuple in $S\cap R$.  Assume then that $\nu$ preserves the restriction of $R$ to $S$.  So in particular, $(\nu(s_1),\dots,\nu(s_\ell))\in R^\mathbb{A}$.  Now each $b\in B\backslash S$ appears as existentially quantified in~$\delta$, and hence there is an evaluation of each $b\in B\backslash S$ in $A$ such that the {positive} diagram of $\mathbb{B}$ holds.  This means there is a homomorphism from $\mathbb{B}$ into $\mathbb{A}$ that agrees with $\nu$ on $S$.
\end{proof}
Thus, requiring that \emph{all} pp-definable relations be preserved is the same as requiring that $\nu$ extend to a homomorphism, which is hardly local.  By restricting the number of quantified variables in a pp-formula though, we obtain a natural hierarchy of local compatibility conditions.  We could say for example that a relation is \emph{$k$-local} if it can be defined from the fundamental relations by way of a pp-formula with at most $k$ existentially quantified variables.  

Consider for example the relational template for positive NAE3SAT, which consists of the domain $\{0,1\}$ with the one ternary relation $R$ given on $\{0,1\}$ by $\{(0,0,1),(0,1,0),(1,0,0),(1,1,0),(1,0,1),(0,1,1)\}$.  Consider the instance on variables $v,w,x,y,z$ with $(v,w,x)\in R$ and $(x,y,z)\in R$.  The partial assignment $\nu:\{v,w,y,z\}\to \{0,1\}$ given by $\nu(v)=\nu(w)=1$ and $\nu(y)=\nu(z)=0$ preserves all projections of the relation $R$, however it cannot extend to a satisfying truth assignment because it fails the pp-definable $4$-ary relation $\{(a,b,d,e)\mid \exists c\ (a,b,c)\in R\And (c,d,e)\in R\}$, under which $(v,w,y,z)$ is related and which on $\{0,1\}$ avoids the tuple $(1,1,0,0)$.    This example demonstrates that it is almost impossible for a positive NAE3SAT instance to be $4$-robustly satisfiable: it would require that all clauses either do not overlap at all or overlap by two variables.  However, asking for preservation of $1$-local relations might allow for a much more powerful concept of $4$-robust NAE-satisfiability in NAE3SAT.  {These suggestions were taken to full generalisation by Ham \cite{ham:SATconf,ham:SAT}, where \texttt{NP}-completeness with respect to some finite level of locality was characterised for Boolean constraint problems.  The subsequent article~\cite{hamjac} extended this to other finite relational templates.}

{For any finite relational structure $\mathbb{A}$, let $\mathbb{A}_{\rm const}$ denote the result of adjoining all singleton unary relations to the signature of $\mathbb{A}$.}  {Now} recall that a relational structure is a \emph{core} if every endomorphism is an automorphism.  Every finite relational structure $\mathbb{A}$ retracts onto a core substructure $\mathbb{A}^\flat$, so that $\CSP(\mathbb{A})$ is identical to $\CSP(\mathbb{A}^\flat)$.  {For any core structure $\mathbb{A}$, the problem $\CSP(\mathbb{A})$ is logspace equivalent to $\CSP(\mathbb{A}_{\rm const})$}  \cite{BJK,lartes}.  {This observation implies that in the following proposition, when $\mathbb{A}$ itself is a core, then $\CSP(\mathbb{A}_{\rm const})$ can be replaced by $\CSP(\mathbb{A})$.}
\begin{pro}\label{pro:core}
Let $\mathbb{A}=\langle A;R_1^{{\mathbb{A}}},\dots,R_{{\ell}}^{{\mathbb{A}}}\rangle$ be a finite relational structure of finite signature.   There is a logspace Turing reduction from each of the following decision problems  to $\CSP(\mathbb{A}_{\rm const})$\up:
\begin{enumerate}
\item[\textup{(i)}] deciding ${\leq} k$-robust satisfiability of a structure $\mathbb{B}$ relative to $\mathbb{A}$\up;
\item[\textup{(ii)}] deciding of a given structure $\mathbb{B}$ if the separability condition holds for any fixed subset of relations from $\{R_1,\dots,R_{{\ell}}\}\cup\{=\}$.
\end{enumerate}
\end{pro}
\begin{proof}
We  make use of the singleton unary relations that appear in $\mathbb{A}_{\rm const}$: for each $a\in A$, let $c_a$ be the unary relation defined on $A$ by $c_a^\mathbb{A}=\{a\}$.

{We first describe a preliminary ``preprocessing'' construction.  Let $\mathbb{B}$ be an instance of one of the above problems and let $m$ be any fixed positive integer; we will use the case of $m=k$ for (i) of the proof and for part (ii) we choose $m$ to be the maximal arity of the fundamental relations.  For any partial assignment $\phi$ from a subset of $B$ of size at most $m$ to $A$ we may construct a new structure $\mathbb{B}_\phi$ in the expanded signature of $\mathbb{A}_{\rm const}$ from $\mathbb{B}$ as follows: we simply output $\mathbb{B}$, but with each of the new relation symbols $\{c_a\mid a\in A\}$ defined by  $c_{a}^{\mathbb{B}}:=\phi^{-1}(a)$.  
This can be constructed in logspace and has the property that $\phi$ extends to a homomorphism if and only if $\mathbb{B}$ is a YES instance of $\CSP(\mathbb{A}_{\rm const})$.}
  
{To decide $k$-robustness, we will successively test each locally compatible partial assignment on every $k$ points as follows. Given a partial assignment $\phi$ on $k$ elements of $B$, first test that $\phi$ is locally compatible (because $k$ is a fixed number this can be done in logspace).  If not, discard and construct the next possible partial assignment.  If $\phi$ is locally compatible, then write the structure $\mathbb{B}_\phi$ to the query tape for a $\CSP(\mathbb{A}_{\rm const})$-oracle to verify that $\phi$ extends to a homomorphism.  If it does not, then reject.  If $\phi$ does extend, then continue the verification of $k$-robustness by trialling the next partial assignment or tuple.}
{Note that the oracle tape is not part of the workspace, the query string does not need to be logarithmic in length and the oracle tape can be assumed to be automatically cleaned at no cost after each query (see \cite[Section 6.2]{BDJN} for example).}

{Deciding separation is similar.  Our algorithm will consult a dictionary consisting of the complements of each fundamental relation on $\mathbb{A}$: for each fundamental relation $R$ or arity $k$, construct $S:=A^k\backslash R^\mathbb{A}$.  These are fixed, so not part of the logspace reduction. The $R$-separation property for a tuple $(b_1,\dots,b_{{n}})\notin R^\mathbb{B}$ (of arity {$n$}) can be decided as follows.  Successively (using the same section of work tape each time), consider each $(a_1,\dots,a_{{n}})\in S^\mathbb{A}$, and for each, let $\phi$ be the partial assignment mapping $b_i$ to $a_i$.  For each such tuple, query $\mathbb{B}_\phi$ to $\CSP(\mathbb{A}_{\rm const})$ to discover if $\phi$ extends to a homomorphism.  If all tuples $(a_1,\dots,a_{{n}})\in S^\mathbb{A}$ return failure then  $R$-separation fails at $(b_1,\dots,b_{{n}})\notin R^\mathbb{B}$.  Otherwise a tuple is found that returns success and the $R$-separation is verified for the tuple $(b_1,\dots,b_{{n}})\notin R^\mathbb{B}$.  This process can be performed on the same segment of work tape for each  tuple $(b_1,\dots,b_k)\notin R^\mathbb{B}$.}
\end{proof}
{A special case of (ii) in Proposition \ref{pro:core} is that there is a reduction from membership in $\mathsf{SP}(\mathbb{A})$ to $\CSP(\mathbb{A}_{\rm const})$, and even to $\CSP(\mathbb{A})$ in the case that $\mathbb{A}$ is a core.}
\begin{eg}\label{eg:noncore}
Recall that the complete graph $\mathbb{K}_2$ is the usual template for {the graph 2-colourability problem} G2C \up(identically, positive 1-in-2SAT\up).  As $\mathbb{K}_2$ is a core structure and $\CSP(\mathbb{K}_2)$ is decidable in logspace, then membership of finite structures in $\mathsf{SP}(\mathbb{K}_2)$ is also decidable in logspace, as is $k$-robust $2$-colourability \up(for any fixed $k$\up).  Similarly, if {$\mathbb{A}$} denotes the template {$\langle \{0,1\};\leq,\{0\},\{1\}\rangle$} for directed graph unreachability {\up(see Immerman \cite{imm} and Larose and Tesson \cite[Theorem 4.1]{lartes}\up),} then $\mathsf{SP}({\mathbb{A}})$ has membership problem decidable in nondeterministic logspace and $k$-robust unreachability is decidable in nondeterministic logspace.
\end{eg}

The assumption of being a core in Proposition \ref{pro:core} is necessary, at least when it comes to testing the separation conditions.  
{\begin{eg} 
Let $\mathbb{C}_3^+$ be the result of adjoining a single looped vertex to the graph~$\mathbb{C}_3$ of Example \ref{eg:K3}.  Then $\CSP(\mathbb{C}_3^+)$ is trivial but deciding membership in the universal Horn class of $\mathbb{C}_3^+$ is \texttt{NP}-complete. 
\end{eg}
\begin{proof}
The loop trivially ensures that $\CSP(\mathbb{C}_3^+)$ is trivial (all graphs are YES instances).  Now let $\mathbb{G}$ be any finite connected graph with more than one vertex.  If $\mathbb{G}$ is $3$-colourable then it lies in $\mathsf{SP}(\mathbb{C}_3)\subseteq \mathsf{SP}(\mathbb{C}_3^+)$.  Now assume that $\mathbb{G}$ lies in $\mathsf{SP}(\mathbb{C}_3^+)$.  As $\mathbb{G}$ is nontrivial, there is at least one tuple that requires a witnessing homomorphism for $=$-separation.  Because $\mathbb{G}$ is connected, this requires a homomorphism from $\mathbb{G}$ into the subgraph $\mathbb{C}_3$.  Thus $\mathbb{G}$ is $3$-colourable.  
\end{proof}}

\part{Flexible satisfaction}\label{part:sat}
{In this part we develop a series of reductions culminating in the \texttt{NP}-completeness of ${\leq}2$-robust satisfiability for positive 1-in-3SAT,  with some extra technical properties that we make use of in Part \ref{part:varieties}.  Along the way, we also deduce the \texttt{NP}-completeness of ${\leq}2$-robust graph 3-colourability and the \texttt{NP}-completeness of ${\leq}3$-robust satisfiability for NAE3SAT.}

{The technical properties we eventually require in Part \ref{part:varieties} are accumulated through each of the reductions that we perform in Part \ref{part:sat}, which we account for by incrementing the index starting from some original instance $I$ (of the problem positive NAE3SAT), as $I=I_0$, $I_1$, $I_2$,\dots.}
\section{Robust NAE3SAT}\label{sec:sat}
The main result in this section is Theorem  \ref{thm:3robNAE} demonstrating the \texttt{NP}-completeness of ${\leq}3$-robust {NAE}3SAT, which is obtained by reduction from positive NAE3SAT via the intermediate problem of positive NAE$(3k+3)$SAT.  The arguments are parallel to those of Gottlob~\cite{got} and Ham and Jackson \cite[\S10]{hamjac} (the latter proving a result sketched in Abramsky, Gottlob and Kolaitis~\cite{AGK}) for conventional satisfiability (as opposed to NAE-satisfiability), though the details are considerably different for the second reduction.  

We commence  with an instance $I=I_0$ of the positive NAE3SAT problem and construct an instance $I_1$ of positive NAE$(3k+3)$SAT (we later choose $k=6$).  This reduction will be a trivial variation of that of Gottlob~\cite{got}.

We let the variables of $I$ be denoted by $X=\{x_1,\dots,x_n\}$ and for any~$k$, consider the set 
\[
X_k=\{x_{i,j}\mid i\in\{1,\dots,n\} \text{ and }j\in\{1,\dots,2k+1\}\}.
\]   
Following Gottlob~\cite{got}, we construct an instance $I_1$ of positive NAE$(3k+3)$SAT as follows: each clause $(x\vee y\vee z)$ in $I$ (so, $x,y,z\in X$) is replaced by the family of all $\binom{2k+1}{k+1}^3$ clauses of the form
\[
(x_{i_1}\vee\dots \vee x_{i_{k+1}}\vee y_{i_1'}\vee\dots \vee y_{i_{k+1}'}\vee z_{i_1''}\vee\dots \vee z_{i_{k+1}''})
\]
where $\{i_1,\dots,i_{k+1}\}$, $\{i_1',\dots,i_{k+1}'\}$, $\{i_1'',\dots,i_{k_{k+1}}''\}$ are $(k+1)$-element subsets of $\{1,\dots,2k+1\}$.  Here if $x=x_i$ then by $x_{i_j}$ we mean $x_{i,i_j}$ and similarly for $y,z$.  
 \begin{thm}\label{thm:NAE3k3}
{For any $k>0$, the problem positive NAE$(3k+3)$SAT is \texttt{NP}-complete.  Moreover, the instance $I_1$ is either a NO instance \up(when $I$ is a NO instance\up), or has the property  that every partial assignment on at most $k$ variables extends to a NAE-satisfying truth assignment.}
\end{thm}
\begin{proof}  This proof is really just a routine verification that Gottlob's argument in~\cite{got} for reducing 3SAT to $(3k+3)$SAT continues to hold in the NAE setting, and is included because this verification needs to be done, and for relative completeness of the rest of the proof, which depends on this initial step.

We show that the mapping $I_0\mapsto I_1$ is a  reduction that takes NO instances of positive NAE3SAT to NO instances of positive NAE$(3k+3)$SAT and YES instances of positive NAE3SAT to $\leq k$-robustly satisfiable instances of positive NAE$(3k+3)$SAT.  It is easy to see that this mapping can be performed in linear time, albeit with a fairly large constant, and in logspace; moreover no negations are added, so the construction trivially preserves positive instances.

Each NAE-satisfying truth assignment $\nu$ for $I$ extends to a NAE-satisfying truth assignment $\nu^\sharp$ for $I_1$ by giving each $x_{i,j}$ the value given $x_i$ by $\nu$.  So $I$ is a YES instance implies $I_1$ is a YES instance.  There is far more flexibility in choosing $\nu^\sharp$ however: it suffices to let $\nu^\sharp$ assign the majority of the variables $x_{i,1},\dots,x_{i,2k+1}$ the value of~$x_i$, and let remaining variables be assigned arbitrarily: this is because each $(k+1)$-subset of $\{1,\dots,2k+1\}$ will also include one of the elements in this majority.  Thus, given $\nu$ for $I$, we may speak of a ``choice for $\nu^\sharp$'' to mean any such assignment agreeing in majority with the underlying value given by $\nu$.   
Based on this, we may easily show that if $I$ is NAE-satisfiable (by some truth assignment $\nu$, say), then $I_1$ is $m$-robustly satisfiable for any $m\leq k$.  To see this, let $p$ be any partial assignment from some $m$-element set of variables $Y\subseteq X_k$ to $\{0,1\}$.  As $m<k+1$, the values of $p$ leave the majority of $x_{i,1},\dots,x_{i,2k+1}$ unassigned, and so we can choose $\nu^\sharp$ to agree with $p$ on all variables in $Y$ and still enable majority agreement with the underlying value given by $\nu$.  Thus $p$ extends to a NAE-satisfying truth assignment for $I_1$.

Now we must prove that when $I$ is a NO instance, then $I_1$ is a NO instance.  In the contrapositive, assume that~$\mu$ is a NAE-satisfying truth assignment for $I_1$.  
We claim that the truth assignment $\mu^\flat$ for~$I$ given by assigning each $x_i\in X$ the majority value in ${\mu}(x_{i,1}),\dots, {\mu}(x_{i,2k+1})$ is a NAE-satisfying assignment for $I$.  
Indeed, for each clause $(\bar{x}\vee\bar{y}\vee\bar{z})$ in $I$, consider $\{k+1\}$-element subsets $\{i_1,\dots,i_{k+1}\}$, $\{i_1',\dots,i_{k+1}'\}$, $\{i_1'',\dots,i_{k+1}''\}$ of $\{1,\dots,2k+1\}$ such that ${\mu}(\bar{x}_{i_1})=\dots={\mu}(\bar{x}_{i_{(k+1)}})={\mu}^\flat(\bar{x})$ and ${\mu}(\bar{y}_{i_1'})=\dots={\mu}(\bar{y}_{i_{(k+1)}'})={\mu}^\flat(\bar{y})$ and ${\mu}(\bar{z}_{i''_1})=\dots={\mu}(\bar{z}_{i''_{(k+1)}})={\mu}^\flat(\bar{z})$, which exist due to the definition of ${\mu}^\flat$.  
As the clause 
\[
(\bar{x}_{i_1}\vee\dots \vee \bar{x}_{i_{k+1}}\vee \bar{y}_{i_1'}\vee\dots \vee \bar{y}_{i_{k+1}'}\vee \bar{z}_{i_1''}\vee\dots \vee \bar{z}_{i_{k+1}''})
\]
exists in $I_1$ and is NAE-satisfied by $\mu$, it follows that $|\{\mu^\flat(\bar{x}),\mu^\flat(\bar{y}),\mu^\flat(\bar{z})\}|=2$ so that $(\bar{x}\vee\bar{y}\vee\bar{z})$ is NAE-satisfied by $\mu^\flat$, as required.
\end{proof}

{\begin{remark}\label{rem:distinct}
Theorem \ref{thm:NAE3k3} can be tightened by the assumption that each clause of $I_1$ contains at most one occurrence of each variable.
\end{remark}}
\begin{proof}
{The given reduction from NAE$3$SAT will not produce a repeated variable in a clause of $I_1$ if the clauses of $I_0$ are without repeats.  To prove the \texttt{NP}-completeness of positive NAE3SAT for this class of instances, first observe that NAE3SAT instances containing clauses of the form $(x\vee x\vee x)$, are trivially unsatisfiable in the NAE sense, so there is no loss of generality in assuming these do not appear.  Up to permutation of the order of appearance of variables, the only other clause type that contains a repeat is one of the form $(x\vee x\vee y)$ (for distinct $x$, $y$), which is satisfied in the NAE sense if and only if $x$ takes the opposite truth value to $y$.  We can replace such clauses by introducing new variables $a,b,c$ (for each such clause) and replacing $(x\vee x\vee y)$ by the  four clauses 
\[
(x\vee y\vee a), (x\vee y\vee b),(x\vee y\vee c),(a\vee b\vee c)
\]
which are equivalently satisfiable in the NAE sense.}
\end{proof}

{In \cite{AGK} it is stated that the standard reduction from SAT to 3SAT translates the $3$-robust 12SAT problem to the $3$-robust 3SAT problem; a sketch proof approach is suggested, while a complete argument can be found in the work of the author and Ham \cite[\S10]{hamjac}}.
We wish to establish an analogous result for the  NAE3SAT problem, this time in reduction from $\leq k$-robust NAE$(3k+3)$SAT for sufficiently large $k$; we find that $k=6$ suffices.

The standard reduction from $m$SAT (for $m>3$) to 3SAT replaces clauses $(\ell_1\vee\ell_2\vee \dots\vee\ell_{m})$ by the conjunction of $3$-clauses
\[
(\ell_1\vee \ell_2\vee z_2)\wedge (\neg z_2 \vee \ell_3\vee z_{3})\wedge \dots \wedge (\neg z_{m-3} \vee \ell_{m-2}\vee z_{m-2})\wedge (\neg z_{m-2}\vee \ell_{m-1}\vee\ell_m)
\]
by way of the introduction of $m-3$ new variables $z_2,\dots z_{m-2}$.  Starting from the instance $I_1$ constructed above {(an instance of positive NAE$(3k+3)$SAT)}, we let the new instance of NAE3SAT {created by this construction be denoted by $I_2$.}    Again this reduction can be computed in logspace. We refer to the {introduced variables as the \emph{$Z$-variables}, and the literals formed from these as \emph{$Z$-literals}}.  Variables in $X_k$ are referred to as \emph{$X$-variables}.   {(Not to be confused with the original set $X$ of variables for $I_0$, which we make no further mention of.)}
Note that the $z_i$ are chosen afresh for each clause (so that a more careful though cumbersome nomenclature would include the clause name).  We refer to $(\ell_1\vee\dots\vee \ell_{m})$ as the \emph{ancestor clause} for the displayed family of $3$-clauses, and that the displayed family of $3$-clauses are the {\emph{clause family} that is \emph{descendent} from $(\ell_1\vee\dots\vee \ell_{m})$}.

It is well known that {the construction just given} is a valid reduction from NAE{$m$}SAT to NAE3SAT {(for $m>3$)}{, so that $I_2$ is a NO instance when $I_1$ is a NO instance}.  {We are aiming to show that if $I_1$ if $\leq k$-robustly satisfiable then $I_2$ is ${\leq}3$-robustly satisfiable.}
In order to give the careful analysis required to translate robust satisfiability properties across this reduction we introduce a notation.
Let $p$ be a partial assignment from some domain $D\subseteq X_k\cup Z$.  \ 
Each $Z$-variable $z$ appears just twice in $I_2$, and only in a pattern of the form $(\underline{\phantom{a}}\vee\underline{\phantom{a}}\vee z)\wedge (\neg z\vee\underline{\phantom{a}}\vee\underline{\phantom{a}})$ within some clause family.  
If $z\in D$ and $p(z)=0$ then we write a $>$ above the bracket pair~``$)\wedge ($'' between the two occurrences of $z$, as follows: $(\underline{\phantom{a}}\vee\underline{\phantom{a}}\vee 0\stackrel{>}{)\wedge (}1\vee\underline{\phantom{a}}\vee\underline{\phantom{a}})$. 
If $p(z)=1$ then we similarly place a $<$ sign.  We will refer to a ``$>$ assignment'' to mean a $Z$-variable given the value $0$, and dually for $<$.
Note that the order signs appear to be the opposite of what might be natural, as it presents visually as suggesting the falsehoods $0>1$ and $1<0$.  The role of the $<,>$ signs however is to record what is needed to be witnessed elsewhere on account of the value that~$z$ has taken, rather than to record the relative ``size'' of the values taken by $z,\neg z$.  
Depending on the number of $Z$-literals from $D$ that appear in a clause family, the partial assignment~$p$ may contribute anywhere between $0$ and $m-1$ instances of $<,>$.  
As we will argue shortly, whenever a clause family contains a $>$ followed later by a $<$, we will need to be able to identify a variable appearing within this interval that takes the value~$0$.  
Likewise, if $<$ is followed by $>$ then we will need to identify a value $1$ somewhere in between.  
The left and right boundaries of the clause family will also act as conditional order symbols of both kinds, provided that the clause family includes at least one $Z$-literal assigned a value by $p$.  So,  if $>$ appears due to a variable $z\in D$ with $p(z)=0$, then there must be an $X$-variable taking the value $1$ somewhere to the left and an $X$-variable taking the value $0$ somewhere to the right; and dually for $<$ if $p(z)=1$. 
We will say that $z$ is \emph{balanced} by $p$ if there are $X$-variables in $D$ that provide the appropriate witnesses: between any pair of~$>$ and 
$<$ there is an $X$-variable taking the value $0$ under $p$, and dually for $<$ and $>$.  
Consecutive symbols in the sequence of order symbols for a single clause family under $p$ will be called \emph{adjacent}.  From the definition of balanced it should be clear that to balance $p$ it suffices to find appropriate values for $X$-variables that lie between adjacent symbols of different kinds: $>,<$ or $<,>$.  Two adjacent symbols will be called \emph{contiguous}, if they lie over brackets in the same clause, or equivalently, if there are no further $Z$-variables between them.

Before we give the key lemma that demonstrates the importance of balanced partial assignments, we give an example to demonstrate the definitions, here with $k=2$ and for space constraints, omitting $\wedge$ and writing $\vee$ as ``$,$'' only.  Consider the following clause family.
\begin{align*}
(x_0,x_1,z_1)(\neg z_1,x_2,z_2)(\neg z_2,x_3,z_3)(\neg z_3,x_4,z_4)(\neg z_4,x_5,z_5)(\neg z_5,x_6,z_6)(\neg z_6,x_7,x_8)
\end{align*}
Assume we have a partial assignment $p$ with domain $D=\{x_1,z_1,z_3,x_5,z_5,z_6\}$ and defined by:
\begin{center}
\begin{tabular}{c|cccccc}
$x$:&$x_1$&$z_1$&$z_3$&$x_5$&$z_5$&$z_6$\\
\hline
$p(x)$:&0&0&1&1&0&0
\end{tabular}
\end{center}
We obtain 
\begin{align*}
(x_0,0,\stackrel{>}{0)(1},x_2,z_2)(\neg z_2,x_3,\stackrel{<}{1)(0},x_4,z_4)(\neg z_4,1,\stackrel{>}{0)(1},x_6,\stackrel{>}{0)(1},x_7,x_8)
\end{align*}
This partial assignment is not balanced.  The pattern of order symbols is $><>>$ (only the $>>$ are contiguous), so allowing for the conditional order symbols at the boundaries, we would need to find $X$-variables witnessing $1>0<1>>0$.  Only one of these required values can be found amongst the present values: that given by $p(x_5)=1$.  The partial assignment can be extended to a balanced partial assignment in a number of ways:
\begin{center}\begin{tabular}{c|ccc}
$x$:&$x_0$&$x_2$ or $x_3$&$x_7$ or $x_8$\\
\hline
$p(x)$:&1&0&0
\end{tabular}
\end{center}
The value of $x_6$ is immaterial, and in general it is easily seen that a contiguous sequence of $>$ assignments to $Z$-literals already NAE-satisfies the clauses internal to the sequence regardless of the value of any $X$-variable within and likewise for~$<$.  Only at the changes between $>$ and $<$ (including the boundaries of clause families, which are conditionally both $<$ and $>$ for this purpose) does the value of the $X$-variable matter.
\begin{lem}\label{lem:balanced}
A partial assignment $p:D\to\{0,1\}$ extends to a NAE-satisfying total assignment of $I_2$ if and only if $p$ can be extended to balanced assignment $p'$ on a domain $D'\supseteq D$ with $D'\backslash D\subseteq X_k$ and such that $p'{\mid}_{D'\cap X_k}$ extends to a NAE satisfying assignment for $I_1$.
\end{lem}  
\begin{proof}
First let us assume that $p$ extends to a NAE satisfying assignment $p^+$ for~$I_2$.  
Then every $Z$-literal has been assigned, and it is obvious that whenever $>$ appears immediately before $<$ (and vice versa) the $X$-variable in the clause within must take the value $0$ (or $1$, respectively) for the clause to be NAE-satisfied.  
As usual, we include the boundaries of clause families as conditional $<$ and $>$.  
Thus for any adjacent occurrences of $<$, $>$ and $>$, $<$ within the same clause family under~$p$, there must be at least one $X$-variable assigned the desired value by $p^+$.  Fix such a witness for each pair of adjacent $<$, $>$ and $>$, $<$.   
If we restrict $p^+$ to the domain consisting of the union of $D$ and these $X$-variable witnesses, then we have a balanced extension~$p'$ of~$p$ to the domain $D'\supseteq D$, and such that $p'{\mid}_{D'\cap X_k}$ is a restriction of a NAE-satisfying assignment for $I_1$.

For the converse, assume that $p$ is balanced on the domain $D'$ and that $p{\mid}_{D'\cap X_k}$ extends to a NAE satisfying assignment $p^+$ for $I_1$.  
We may assume that $p^+$ not only extends $p{\mid}_{D'\cap X_k}$, but also extends $p$, by letting $p^+$ agree with $p$ on all $Z$-variables in $D'$.  Then $p^+$ is  a balanced partial assignment for $I_2$, missing only, possibly, some values on $Z$-variables. 
We now show how to extend $p^+$ to a NAE satisfying assignment for $I_2$.  
If $p^+$ is already total, then we are done, but otherwise there is at least one $Z$-variable not in $D'$.  
It suffices to show that we can extend $p^+$ to one further $Z$-variable and remain balanced; if so, then repeated application of this process eventually leads to a total assignment that is balanced and therefore NAE-satisfying.  
For instances where $>$ is adjacent but not contiguous to $>$, we may extend to all intermediate $Z$-variables by making further $>$ assignments.  
A dual statement holds for $<$ adjacent to $<$.  
Now we consider where an instance of $>$ is adjacent to but not contiguous to $<$ under $p^+$, with the dual case of $<$ adjacent to $>$ following by symmetry.  
We include the situation where both of $>$ and $<$ are from boundaries of clause families in this consideration, which is stronger than what is required for the notion of balanced (where we include boundaries as contributing conditional order symbols only to match instances that are created from $Z$-variables).  
Now, $p^+$ must assign $0$ to at least one $X$-variable $x$ between the current choice of $>$ and $<$: if both of $>$ and $<$ are from boundaries, then this is because $p^+$ NAE-satisfies $I_1$, but otherwise it is because $p^+$ is balanced.  
As $>$ and $<$ are adjacent but not contiguous, at most one of the symbols $>$, $<$ lies above a bracket in the clause containing $x$.  
Without loss of generality, assume that $>$ does not. 
We consider two cases, according to whether $>$ is from a boundary, or from a $Z$-literal.  
First consider when $>$ is due to a $Z$-variable assignment rather than a boundary, as in the following example (here with one boundary case at the other end, though this has no influence on the way that we extend $p^+$):
\begin{align*}
\ldots,\stackrel{>}{0)(1},\stackrel{\text{not }x}{\overbrace{\underline{\phantom{a}}}},z)(\neg z,\overbrace{\underline{\phantom{a}},z')(\neg z',\underline{\phantom{a}},\underline{\phantom{a}}}^{x\text{ here}}\stackrel{<}{)}.
\end{align*}
In this case we make a further $>$ assignment to the variable $z$, which maintains balance for $p^+$ given that $x$ appears somewhere to the right of $z$.  
The general case for this configuration differs from the example only in that there may be extra clauses between the one containing $z$ and the one containing $z'$, but the argument is identical.
Now consider when $>$ is due to a boundary, as in the following example (again, differing from the general case, only by having $z'$ appear in the second clause, but the argument is the same if there are further intermediate clauses):
\begin{align*}
\stackrel{>}{(}\stackrel{\text{not }x}{\overbrace{\underline{\phantom{a}}}},\stackrel{\text{not }x}{\overbrace{\underline{\phantom{a}}}},z)(\neg z,\overbrace{\underline{\phantom{a}},z')(\neg z',\underline{\phantom{a}}}^{x\text{ here}},1\stackrel{<}{)(}0,\ldots
\end{align*}
Let $y_0,y_1$ denote the two $X$-variables in the boundary clause associated with $>$ and $z$ the $Z$-literal (as shown in the example).  If $p^+(y_0)\neq p^+(y_1)$ or $p^+(y_0)=p^+(y_1)=1$ then we extend $p^+$ to $z$ by $p^+(z)=0$ as shown:
\begin{align*}
\stackrel{>}{(}1,1,0\stackrel{>}{)(}1,\underline{\phantom{a}},z')(\neg z',\underline{\phantom{a}},1\stackrel{<}{)(}0,\ldots
\end{align*}
which remains balanced because $p^+(x)=0$ still appears between the adjacent $>$, $<$.  If $p(y_0)=p^+(y_1)=0$ then we extend $p^+$ to $z$ by $p^+(z)=1$:
\begin{align*}
\stackrel{>}{(}0,0,1\stackrel{<}{)(}0,\underline{\phantom{a}},z')(\neg z',\underline{\phantom{a}},1\stackrel{<}{)(}0,\ldots
\end{align*}
which again is balanced.  This completes the proof, as repeated application eventually leads $p^+$ to be totally defined and NAE-satisfying.
\end{proof}
Each $Z$-variable in the domain of a partial assignment requires at most two $X$-variables to stabilise it, so if the size of $D$ is relatively small in comparison to $k$ then we have some hope that we can extend it to a balanced assignment on $D'$ containing at most $k$ distinct $X$-variables.  This is not immediately guaranteed, as the pattern of occurrences of $X$-variables around any $Z$-variables in $D$ have to be compatible with the values they are given by $p$.  When $|D|\geq 4$ it is easy to see that this may not in general be possible.  The following theorem shows that it is always possible when $|D|\leq 3$.

\begin{thm}\label{thm:3robNAE}
Let $k\geq 6$.  If $I_1$ is not NAE-satisfiable then $I_2$ is not NAE-satisfiable.  If $I_1$ is NAE-satisfiable then $I_2$ is ${\leq}3$-robustly NAE-satisfiable.
\end{thm}
\begin{proof}
The first statement is well known but also follows from Lemma \ref{lem:balanced}: if $\nu$ NAE-satisfies $I_2$, then it is balanced, from which it immediately follows that each clause family contains an $X$-variable given $0$ by $\nu$ and an $X$-variable given $1$ by $\nu$, so that $\nu$ also NAE-satisfies $I_1$.  

Now assume that $I_1$ is NAE-satisfiable, in which case we know additionally that every partial assignment on at most $k$ variables extends to a full NAE-satisfying assignment. 
Let $D\subseteq X_k\cup Z$ be a domain of ${\leq}3$ variables and $p:D\to \{0,1\}$ be any assignment that does not directly violate a clause of $I_2$.    We show that $p$ extends to a NAE-satisfying assignment for $I_2$.  First, it is trivial that $p$ can be extended to $3$ variables without violating a clause; so there is no loss generality in assuming $|D|=3$.  By Lemma \ref{lem:balanced} it now suffices to show that $p$ can be extended to a balanced partial assignment on some domain $D'\supseteq D$ containing at most $6$ distinct $X$-variables.    If $D$ contains no $Z$-variables, then this is trivial, as $p$ is already balanced.

In the remaining cases there will be $Z$-variables that will require balancing.  
We use the order symbol notation, and note that each pair of adjacent order symbols (including boundaries) bracket some $X$-variables, which we will refer to as an \emph{interval}.  
The \emph{width} of an interval will be the number of unassigned $X$-variables in the interval.  
We say that an interval is \emph{wide} (for a domain $D$) if it has width $\geq 6-|D\cap X_k|$.

Case 1: \emph{$D$ contains just one $Z$-variable, $z$, and two $X$-variables $x,y$}.\\
The variable $z$ appears exactly twice in $I_2$, in consecutive clauses within one clause family.  
The definition of balanced ensures that we need at most $2$ further $X$-variables to be added to $D$ in order to balance $p$: one for the interval left of $z$ and for the interval right $\neg z$.  
If one of these intervals (say, the left of $z$) has width $0$, then it is because $z$ appears in a boundary clause with $x,y$, and the assumption that $p$ does not violate this clause ensures it is already balanced; in this case the other interval has width at least $3k+1\geq 19$ (so, is wide) and we may choose any $X$-variable in it to balance $p$.  
If both intervals have width at least $1$, then we may select an unassigned $X$-variable from each interval and use these to balance $p$.  
Note that the assumption that $X$-variables appear at most once in each clause family is used here; see Remark \ref{rem:distinct}.

Case 2: \emph{$D$ contains two $Z$-variables, $z,z'$ and one $X$-variable $x$}.\\
  If $z,z'$ appear in the same clause family, then they create three intervals within the clause family; we may assume without loss of generality that $z$ appears left of $z'$.  
  In this case we need to select up to three further $X$-variables to balance $p$, one for each interval.  
  If all of these intervals contain an unassigned $X$-variable, then we can balance $p$ by extending $D$ to include these.  
  Otherwise, if one of the intervals has width $0$, it can only be the interval between $z$ and $z'$, and this only occurs in the case where $(\neg z\vee x\vee z')$ is a clause.  
  The assumption that $p$ does not violate $(\neg z\vee x\vee z')$ ensures that this interval is already balanced, so that we can balance~$p$ by selecting unassigned $X$-variables from the first and third interval.  
  
Next consider when $z$ and $z'$ appear in different clause families.  
In this case there are four intervals, though none of them can have width $0$, and at least two of the intervals are wide, having length at least $\lceil(3k+3)/2\rceil-1\geq 10$ (given that $k\geq 6$).  
Our strategy is to balance each interval starting from the shortest.  If there is at most one interval of width less than $2$ then this is easy: balance the shortest one (so that remaining shorter interval now has width at least $1$), then the next shortest.  
The remaining two intervals now still have width at least $10-2=8$, so that we may easily choose two further $X$-variables to balance $p$.  
  
The final consideration for Case 2 is when there are two intervals of length $1$, which means that both $z$ and $z'$ appear (possibly negated) in boundary clauses containing $x$.  
If one of these boundary clauses is already balanced by the value taken by $x$ then we may proceed as before by selecting unassigned $X$-variables to balance the remaining two or three intervals.  
If $p(x)$ balances neither of these two boundary clauses then it follows that the literal for $z$ appearing with $x$ takes the same value under $p$ as the literal for $z'$ appearing with $x$.  
Select the remaining $X$-variable $y$ and $y'$ from each of these boundary clauses and use them to balance the two boundary clauses by setting $p(y)=p(y')=\neg p(x)$.   
Note that this does not require  $y$ to be different from $y'$, as all $X$-variables appear in non-negated form in~$I_2$.  
The remaining two intervals have width at least $3k+1$ and can be balanced by the inclusion of two further $X$-variables (making a total of at most five $X$-variables in this case).

Case 3: \emph{$D$ contains only $Z$-variables $z,z',z''$}.\\
The subcase where all three appear in the same clause family is easy: there are four intervals and we may choose an $X$-variable from each to balance $p$.  If exactly two of $z,z',z''$ appear in the same clause family then we have $5$ intervals to balance, as depicted in the following schematic, with the various intervals labelled $A_1$--$A_5$:
\begin{multline*}
\stackrel{A_1}{\overbrace{\underline{\rule{0cm}{0.8em}\hspace{2cm}}}}z)\wedge (\neg z\stackrel{A_2}{\overbrace{\underline{\rule{0cm}{0.8em}\hspace{2cm}}}}z')\wedge (\neg z'\stackrel{A_3}{\overbrace{\underline{\rule{0cm}{0.8em}\hspace{2cm}}}}\\
\stackrel{A_4}{\overbrace{\underline{\rule{0cm}{0.8em}\hspace{2cm}}}}z'')\wedge (\neg z''\stackrel{A_5}{\overbrace{\underline{\rule{0cm}{0.8em}\hspace{2cm}}}}
\end{multline*}
None of the intervals can have width $0$,  only $A_2$ can have width $1$, and one of $A_4$ and $A_5$ has width at least $\lceil(3k+3)/2\rceil-1\geq 10$ and there will be no loss of generality in assuming that $A_4$ is the longer interval.  First choose an $X$-variable,~$x_2$,  to balance~$A_2$.  If~$x_2$ appears in any other interval, it can only be in one of $A_4$ or~$A_5$, which are reduced in width by at most $1$ by the value given to $x_2$.  Next choose an unassigned $X$-variable, $x_5$, to balance $A_5$, which is possible as $A_5$ has width at least $1$ at this stage.  Note that $A_4$ still has width $\geq 9$ at this point.  Now both of $A_1$ and $A_3$ have width at least one (with $1$ only possible if $x_5$ appears).  Balance $A_1,A_3$ by selecting unassigned variables $x_1,x_3$.  The interval $A_4$ still has width at least $7$, so can be balanced by selection of one further unassigned $X$-variable.  Then $p$ is balanced.  

Finally, assume that all three of $z,z',z''$ appear in different clause families:
\begin{multline*}
\stackrel{A_1}{\overbrace{\underline{\rule{0cm}{0.8em}\hspace{2cm}}}}z)\wedge (\neg z\stackrel{A_2}{\overbrace{\underline{\rule{0cm}{0.8em}\hspace{2cm}}}}\\
\stackrel{A_3}{\overbrace{\underline{\rule{0cm}{0.8em}\hspace{2cm}}}}z')\wedge (\neg z'\stackrel{A_4}{\overbrace{\underline{\rule{0cm}{0.8em}\hspace{2cm}}}}\\
\stackrel{A_5}{\overbrace{\underline{\rule{0cm}{0.8em}\hspace{2cm}}}}z'')\wedge (\neg z''\stackrel{A_6}{\overbrace{\underline{\rule{0cm}{0.8em}\hspace{2cm}}}}
\end{multline*}
As before we must choose an $X$-variable from each of the six intervals $A_1$--$A_6$ to balance $p$.  Up to symmetry, there is no loss of generality in assuming $A_1$ is narrower than $A_2$, that $A_3$ is narrower than $A_4$ and $A_5$ is narrower than $A_6$.  Note  that $A_2$, $A_4$ and $A_6$ are wide intervals, here meaning they initially have at least six unassigned $X$-variables. 

For $i=1,3,5$, let $x_i,y_i$ be the leftmost $X$-variables in $A_i$, noting that by Remark~\ref{rem:distinct} we have that $x_i\neq y_i$.  If $|\{x_1,y_1,x_3,y_3,x_5,y_5\}|\geq 3$ we may find three distinct representatives of $\{x_1,y_1,x_3,y_3,x_5,y_5\}$, one each from $\{x_1,y_1\}$, $\{x_3,y_3\}$ and $\{x_5,y_5\}$, and balance $A_1$, $A_3$ and $A_5$ using these.  If $|\{x_1,y_1,x_3,y_3,x_5,y_5\}|\leq 2$ then $\{x_1,y_1\}=\{x_3,y_3\}=\{x_5,y_5\}$ and it suffices to assign $0$ to $x_1$ and $1$ to $y_1$ to balance all three of $A_1,A_3,A_5$.  Now we may balance the wider intervals $A_2,A_4,A_6$ easily, as they each have at least three remaining unassigned $X$-variables.  This completes Case 3, the final case.
\end{proof}

The following lemma is an immediate corollary of Theorem \ref{thm:3robNAE} and the fact that three variables can appear together (negated or non-negated) in at most one clause of $I_2$.  
\begin{lem}\label{lem:atmost2}
Let $v_1,v_2,v_3$ be three variables from $X_k\cup Z$, not necessarily distinct and assume that the instance $I_2$ is NAE-satisfiable.  
\begin{itemize}
\item If $v_1,v_2,v_3$ are distinct variables that appear together in a single clause, then there are precisely two assignments $p:\{v_1,v_2,v_3\}\to\{0,1\}$ that do not extend to NAE-satisfying assignments for $I_2$, and these assignments are negations of each other.  
\item Otherwise, if $|\{v_1,v_2,v_3\}|\leq 2$ or if $v_1,v_2,v_3$ do not appear within a single clause, then every assignment $p:\{v_1,v_2,v_3\}\to\{0,1\}$ extends to a NAE-satisfying assignment for $I_2$.
\end{itemize}
\end{lem}

\section{Edge and vertex robust G3C}\label{sec:G3C}
The main result of this section is {the following} Theorem \ref{thm:G3C}, which will be an immediate corollary of Theorem \ref{thm:G3CI} below and Theorem \ref{thm:3robNAE}.  A \emph{triangle} in a graph will mean a $3$-clique.
\begin{thm}\label{thm:G3C}
{The} ${\leq} 2$-robust graph $3$-colourability {problem} is \texttt{NP}-complete.  Moreover, this remains true for graphs in which every edge lies within exactly one triangle.
\end{thm}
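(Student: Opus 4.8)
The plan is to reduce from the strongly flexible monotone NAE3SAT instances $f(I)$ supplied by Lemma~\ref{lem:NAE}, turning NAE-satisfying assignments into proper $3$-colourings and turning the detailed flexibility recorded in parts (1)--(4) of that lemma into $2$-robustness of the colouring. Fix the colour set $\{0,1,2\}$ and a distinguished \emph{ground} vertex $g$, thought of as receiving colour $0$. For each variable $v$ of $f(I)$ I introduce vertices $v$ and $\bar v$ and the triangle $\{g,v,\bar v\}$; this forces $\{v,\bar v\}$ onto the two non-ground colours $\{1,2\}$ (read as true/false), realises negation for the $Z$-variables, and already places every ground edge and every $v$--$\bar v$ edge inside a triangle. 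For each clause $c=(\ell_1\vee\ell_2\vee\ell_3)$ I add a fresh triangle $T_c=\{[c,1],[c,2],[c,3]\}$ and an edge from $[c,j]$ to the literal vertex $\ell_j$. Since $T_c$ must realise all three colours in any proper $3$-colouring, exactly one of its vertices is coloured $0$, one coloured $1$ and one coloured $2$; the literal adjacent to the colour-$1$ vertex is forced to $2$, the one adjacent to the colour-$2$ vertex is forced to $1$, and the one adjacent to the colour-$0$ vertex is free.

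A short derangement argument then gives the correctness of plain $3$-colouring. If $(\ell_1,\ell_2,\ell_3)$ were monochromatic, say all equal to colour $1$, then no vertex of the rainbow triangle $T_c$ could receive colour $1$, which is impossible; conversely, if the literal values are not all equal, a suitable bijection $j\mapsto\text{colour}$ avoiding each forbidden value $\ell_j$ exists, so the gadget is colourable. Hence $\mathbb{G}_{f(I)}$ is $3$-colourable precisely when $f(I)$ is NAE-satisfiable, and NO instances of $f(I)$ yield non-$3$-colourable graphs. For the triangulation clause of the theorem I must still put each clause-to-literal edge $[c,j]$--$\ell_j$ inside a triangle; I do this by introducing a helper vertex $w_{c,j}$ adjacent to both $[c,j]$ and $\ell_j$, so that $\{[c,j],\ell_j,w_{c,j}\}$ is a triangle and all three of its edges are triangulated at once. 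As $[c,j]$ and $\ell_j$ are already adjacent, $w_{c,j}$ is simply forced to the unique remaining colour, so $3$-colourability is unaffected. (Membership in \texttt{NP} is as in Abramsky, Gottlob and Kolaitis \cite{AGK}: since in these instances every YES is robustly $3$-colourable and every NO is non-$3$-colourable, the \texttt{NP} property of bare $3$-colourability already separates the two.)

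The heart of the argument is verifying $2$-robustness. Given a locally compatible partial $3$-colouring on two vertices of $\mathbb{G}_{f(I)}$, I use the colour-symmetry (automorphisms of $\mathbb{K}_3$) to normalise any fixed value of $g$ to $0$ and then case-split on the types of the two constrained vertices. If both are variable-triangle (literal) vertices, they determine at most two literals of $f(I)$ and local compatibility makes the induced partial truth assignment locally compatible, so it extends by the plain $2$-robustness of $f(I)$ (Lemma~\ref{lem:NAE}(1)); the companion vertices $\bar v$ are coloured automatically. If one fixed vertex is a literal and the other a triangle vertex $[c,j]$, then the colour of $[c,j]$ either pins $\ell_j$ to one value or leaves it free, and completing the rainbow triangle $T_c$ around this choice is exactly the situation handled by the majority-value flexibility of Lemma~\ref{lem:NAE}(2). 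Fixing two triangle vertices of the same clause forces all three literals into a non-monochromatic pattern that extends by the clause-wise extendability of Lemma~\ref{lem:NAE}(4), while two triangle vertices in different clauses combine (2) and (4). A fixed helper $w_{c,j}=\gamma$ forces $\{[c,j],\ell_j\}$ to be the two non-$\gamma$ colours, which is absorbed into a constraint on $\ell_j$ and $[c,j]$ and then discharged as in the literal/triangle case. The one genuinely global subtlety is that a fixed literal vertex carrying a $Z$-variable constrains both the clause containing $z$ and the clause containing $\neg z$; this is precisely why Lemma~\ref{lem:NAE}(3) is stated, guaranteeing a simultaneous majority placement of $z$ and $\neg z$ so that the $Z$-literals introduced in the passage $I^\sharp\mapsto f(I)$ can always be stabilised.

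The main obstacle, as the surrounding discussion anticipates, is the interaction of triangulation with robustness rather than either in isolation. Adding the helper triangles is trivially compatible with $3$-colourability, but---as the text notes---triangle-addition does not in general preserve $2$-robustness, because a fixed helper vertex re-introduces local forcing that collapses a clause triangle into a two-colour configuration. Showing that every such collapse is liftable to a full colouring is exactly the work that forces me to draw on the finer data of Lemma~\ref{lem:NAE} (majority values, $Z$-literal sign flexibility, clause-wise extendability) rather than on bare $2$-robust NAE-satisfiability; correctly matching each colouring-level forcing to the appropriate item (1)--(4) is the crux of the proof. The extension to the edge-flexibility statement of Proposition~\ref{pro:edgerobust} should then follow by applying the same majority/free-vertex analysis to pairs of clause triangles.
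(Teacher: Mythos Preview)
Your construction is exactly the paper's: your ground vertex $g$, literal triangles, clause triangles $T_c$, and helpers $w_{c,j}$ are the paper's special vertex, literal vertices, clause-position triples, and connector vertices respectively, and you correctly identify Lemma~\ref{lem:NAE}(1)--(4) as the toolkit and locate the main difficulty at the helper vertices.

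The gap is in your absorption step. When one of the two fixed vertices is a helper $w_{c,j}$ and the other is some vertex $u$, replacing the helper constraint by a constraint on the pair $(\ell_j,[c,j])$ leaves you with \emph{three} constrained vertices $\ell_j$, $[c,j]$, $u$ (or four, if $u$ is itself a helper), so the previously handled literal/triangle case---which concerned only two fixed vertices---does not apply. The paper does not attempt this reduction; it treats connector-vs-literal, connector-vs-connector, and connector-vs-clause-position as separate top-level cases (its Cases~3, 4, 5), each argued directly from the majority-value machinery. In particular the connector-vs-clause-position case splits further according to whether the associated literals $\lceil u\rceil$, $\lceil v\rceil$ are equal, negations of one another, or built on distinct variables, and the negation subcase is precisely where Lemma~\ref{lem:NAE}(3) enters. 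Your sketch invokes (3) only for internal $Z$-literal stabilisation, but it is equally essential in this part of the colouring-level case analysis; without it the like-colouring of $u$ and $v$ in that subcase cannot be achieved.
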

The first statement in the theorem is given in \cite[Theorem 3.3]{AGK}.  {The second statement will be required for later results in the article.}  In Proposition \ref{pro:edgerobust} we also observe a slight extension to the theorem, that shows in the case that our construction is $3$-colourable, then there are also no non-local restrictions to the number of colours required to cover two distinct edges. 

The construction is closely based on a standard reduction of NAE3SAT to G3C except we have more triangles, and the robust colourability is proved by way of Theorem \ref{thm:3robNAE}.  

We continue to build {on the reduction commenced in} Section \ref{sec:sat}, taking {the} instance $I_2$ of NAE3SAT, satisfying the conditions of Theorem \ref{thm:3robNAE} {and Lemma \ref{lem:atmost2}}.
We must construct a graph $I_3=\mathbb{G}_{I}$ (we write $\mathbb{G}_I$ rather than $\mathbb{G}_{I_2}$ to avoid double subscript, and because the construction ultimately depends on the original instance $I_0=I$ of positive NAE3SAT).
The vertices are as follows.
\begin{itemize}
\item[A1] There is a special vertex $a$.
\item[B1] For each variable $x\in X_k\cup Z$ there are vertices corresponding to both $x$ and $\neg x$ (the \emph{literal vertices}: we consider these to be both literals in $I_2$ and vertices in $\mathbb{G}_I$).
\item[C1] For each clause $e$ {of $I_2$} we have three \emph{connector} vertices $c_{e,1}, c_{e,2},c_{e,3}$.
\item[D1] For each clause $e$ {of $I_2$} we have three vertices $e_1,e_2,e_3$ (the \emph{clause position vertices}).
\end{itemize}
The edges are described as follows.
\begin{itemize}
\item[E1] The special vertex is adjacent to all literal vertices.
\item[F1] Each literal vertex $x$  is adjacent to its negation $\neg x$.
\item[G1] For each clause $e$, the clause vertices $e_1,e_2,e_3$ form a {triangle}.
\item[H1] For each clause $e=(\ell_1\vee\ell_2\vee\ell_3)$ and each $i\in \{1,2,3\}$, the following three vertices form a triangle: the literal vertex $\ell_i$, the connector vertex $c_{e,i}$ and $i^{\rm th}$ clause position vertex $e_i$ for $e$.
\end{itemize}
Figure~\ref{fig:G} gives a schematic of this construction, with connector vertices shown with smaller vertex symbols.  In a typical instance, some type B1 (literal) vertices will be involved in many edges, but those corresponding to negations of variables from~$X_k$ will  connect only to vertex $a$ and to their non-negated form, as variables from~$X_k$ appear only positively in $I_2$.
\begin{figure}
\begin{tikzpicture}
\node [element, minimum size=6pt,very thick, label = $a$] (a) at (0,4) {};
\foreach \x in {0,...,5}{
\node [element, minimum size=6pt,very thick] (\x) at (\x/2-2,3) {};
\draw [thick] (a) to (\x);
}
\node at (1.15,3) {\ldots};
\foreach \x in {6,7}{
\node [element, minimum size=6pt,very thick] (\x) at (\x/2-1.3,3) {};
\draw [thick] (a) to (\x);
}
\foreach \x/\y in {0/1,2/3,4/5,6/7}{
\draw [thick] (\x) to (\y);
}

\node at (1,1.5) {\ldots};

\foreach \x in {0,1,3}{
\node [element, minimum size=6pt,very thick] (tu\x) at (1.4*\x-2.1,2) {};
\node [element, minimum size=6pt,very thick] (tl\x) at (1.4*\x-2.5,1.2) {};
\node [element, minimum size=6pt,very thick] (tr\x) at (1.4*\x-1.7,1.2) {};
\draw [thick] (tu\x) to (tl\x) to (tr\x) to (tu\x);
}

\draw [thick] (tl0) [out = 90,in = 240] to (0);
\node [element, minimum size=3pt, thick] (0a) at (-2.6,2.2) {};
\draw (tl0) [out = 95,in = 270] to (0a) [out = 60,in = 230] to (0);
\draw [thick] (tu0) [out = 50,in = 230] to (2);
\node [element, minimum size=3pt, thick] (2a) at (-1.75,2.55) {};
\draw (tu0) [out = 60,in = 230] to (2a) [out = 30,in = 220] to (2);

\draw [thick] (tr0) [out = 90,in = 220] to (4);
\node [element, minimum size=3pt, thick] (4a) at (-1.38,2.25) {};
\draw (tr0) [out = 95,in = 235] to (4a) [out = 30, in = 215] to (4);

\draw [thick] (tl1) [out = 110,in = 220] to (5);
\node [element, minimum size=3pt, thick] (5a) at (-0.5,2.5) {};
\draw  (tl1) [out = 120,in = 210] to (5a) [out = 25,in = 215] to (5);
\draw [thick] (tu1) [out = 50,in = 220] to (1.1,2.5);
\draw (tu1) [out = 40,in = 220] to (1.2,2.5);
\draw [thick] (tr1) [out = 90,in = 220] to (0.8,2.5);
\draw  (tr1) [out = 85,in = 220] to (0.9,2.5);

\draw [thick] (tl3) [out = 110,in = 280] to (2);
\node [element, minimum size=3pt, thick] (2aa) at (-.2,2) {};
\draw  (tl3) [out = 112,in = 350] to (2aa) [out = 160,in = 270] to (2);

\draw [thick] (tu3) [out = 130,in = 300] to (1.4,2.5);
\draw [thick] (tr3) [out = 60,in = 300] to (7);
\node [element, minimum size=3pt, thick] (7a) at (2.8,2.2) {};
\draw  (tr3) [out = 50,in = 270] to (7a) [out = 105,in = 315] to (7);
\draw (tu3) [out = 125,in = 300] to (1.5,2.5);

\node at (-3.6,4) {A1:};
\node at (-3.6,3.05) {B1:};
\node at (-3.6,2.3) {C1:};
\node at (-3.6,1.5) {D1:};

\end{tikzpicture}
\caption{A schematic of $\mathbb{G}_{I}$}\label{fig:G}
\end{figure}
\begin{remark}\label{rem:4cycles}
The property that no clause of $I_2$ contains a variable {twice (even negatively and positively)} ensures that there are no $4$-cycles in $\mathbb{G}_{I}$: more precisely there are no four distinct vertices $u_0,u_1,u_2,u_3$ with each of $\{u_i,u_{i\oplus_4 1}\}$ an edge.  
It is also clear that each edge of $\mathbb{G}_I$ lies in exactly one triangle.
\end{remark}
\begin{thm}\label{thm:G3CI}
If $I_2$ is not NAE-satisfiable then $\mathbb{G}_I$ is not $3$-colourable.  If $I_2$ is NAE-satisfiable, then $\mathbb{G}_I$ is ${\leq} 2$-robustly $3$-colourable.
\end{thm}
\begin{proof}
We begin by setting some notation that is useful through the proof.  We abbreviate the four vertex types (A1), (B1), (C1), (D1) as $A$, $B$, $C$, $D$ for brevity in the remainder of this proof.
For a vertex  $w$ of type $C$ and $D$, we let $w_B$ denote the unique literal associated with $w$, $w_C$ the unique connector vertex equal or adjacent to $w$, and $w_D$ the unique position vertex equal or adjacent to $w$ (if $w$ has type~$C$). 
We also let $e_w$ denote the clause corresponding to the triangle of position vertices that includes $w_D$.
If $w$ has type $B$, then we also use $w_B$ to denote $w$, though leave $w_C$, $w_D$ and $e_w$ undefined in general.

Aside from the connector vertices, which play only a triangulation role, the construction of $\mathbb{G}_I$ from $I_2$ is a standard reduction, but we will need to explore one half of the reduction more carefully than usual.
For the easy direction: when there is a $3$-colouring of $\mathbb{G}_I$, we may permute the colour names so that the special vertex~$a$ has been coloured $2$; then by (E1), the colours of the literal vertices lie amongst $\{0,1\}$, which can then be shown to be a NAE-satisfying truth assignment for $I_2$.  
This fact is completely standard and routinely verified from the edge definitions (F1), (G1).  
The other direction is also standard but we need the details for the robust satisfiability argument.  

Assume that $I_2$ has been NAE-satisfied by some truth assignment $\nu$.  
We  define the following \emph{colouring strategy} for $3$-colouring $\mathbb{G}_I$. 
\begin{itemize}
\item Colour the vertex $a$ by $2$.  
\item Colour each literal vertex by its truth value.  
\item For each clause $e=(\ell_1\vee\ell_2\vee\ell_3)$ there are two literals taking the same value under $\nu$ (the \emph{majority value}).  
\begin{itemize}
\item Make an arbitrary choice of one of the literals taking the majority value and colour its position vertex by $2$.
\item For the other two vertices, colour the position vertex by the opposite of the truth value the corresponding literal takes.  
\end{itemize}
\item Each connector vertex is adjacent to just two other vertices: a literal vertex and a clause position vertex, both already coloured.  Colour the connector vertex by the remaining available colour.  
\end{itemize}
The colouring strategy gives a valid $3$-colouring of $\mathbb{G}_I$.  
To show that $\mathbb{G}_I$ is ${\leq}2$-robustly $3$-colourable, we must take this line of argument further, utilising the fact that in the situation that $I_2$ is NAE-satisfiable, it is also ${\leq}3$-robustly satisfiable.
 
Let $u,v$ be a pair of distinct  vertices.  
Our goal is to show that any valid colouring of $u$ and $v$ extends to a $3$-colouring for $\mathbb{G}_I$.  
We will say that the vertices $u$ and $v$ are \emph{like-coloured} if they are given the same colour, and are \emph{unlike-coloured} if they are given different colours.
It  suffices to find a $3$-colouring of $\mathbb{G}_I$ that like-colours $u$ and $v$ if they are not adjacent, and a $3$-colouring that unlike-colours $u$ and $v$, as all other colourings of $u,v$ can be obtained by permuting the colour names from these. 
If $u$ and $v$ are adjacent, then all colourings unlike-colour~$u$ and~$v$ and there is nothing further to check.  
Now assume that~$u$ is not adjacent to~$v$.  

Without loss of generality, we may assume that the vertex type ($A,B,C,D$) of~$u$  is alphabetically earlier or equal to the type of $v$.  
If $u$ has type $A$, then $u=a$, and as~$v$ is not adjacent to $u$, it follows that $v$ has type $C$ or~$D$.   
Using only the $2$-robust satisfiability of $I_2$ we may easily find a NAE-satisfying truth assignment placing the literal $v_B$ in the majority value within the clause $e_v$, allowing for the flexible choice of colour~$2$ (like-colouring with $u$) or a colour from $\{0,1\}$ for $v$ (unlike-colouring with $u$).
We may now assume that the type of $u$ is at least $B$.  

If both $u$ and $v$ have type $B$, then~$u$ and~$v$ are literals, and under the colouring strategy their colour is their truth value.  
In this case we again need only the $2$-robust NAE-satisfiability of $I_2$ to find a like- and an unlike-colouring of $u$ and $v$.
Thus we may assume that  $v$ has type either $C$ or~$D$, meaning that $v_B$, $v_C$, $v_D$ and~$e_v$ are all defined.

The clause $e_v$ corresponding to $v$ contains two further literals beyond $v_B$, at least one of which being neither  $u_B$, nor $\neg u_B$; we denote this literal by $w=w_B$, and let $w_C$ and $w_D$ denote the connector and position vertices corresponding to the occurrence of $w$ in the clause $e_v$.    
It is still possible that $u_B$ or $\neg u_B$ is equal to $v_B$ or is the third literal in $e_V$, though in the case that $u_B$ is $v_B$, then the assumption that $u$ is not adjacent to $v$ ensures that $u$ is of type~$C$ or~$D$. 
We now consider some possible truth values for a triple of literals $(u_B,v_B,\ell)$ including $u_B$, $v_B$ and some generic third literal $\ell$ distinct from $u_B$, $v_B$ and their negations.  
In most instances we will choose $\ell$ as $w$.  
Because the dual of a NAE-satisfying truth assignment is also NAE-satisfying, it will suffice to consider only assignments in which~$u_B$ takes the value~$0$, and the four combinations are gathered in columns $u_B$, $v_B$ and~$\ell$ of Table~\ref{tab:table}.
\begin{table}
\begin{tabular}{c|ccc|cccc}
&$u_B$&$v_B$&$\ell$&$u_C$&$u_D$&$v_C$&$v_D$\\
\hline
$p_0$&0&0&0&2&1&1/2&2/1\\
$p_1$&0&0&1&2&1&2&1\\
$p_2$&0&1&0&2&1&2&0\\
$p_3$&0&1&1&2&1&0/2&2/0\\
\end{tabular}
\caption{\rule{0cm}{1.4em}The four possible partial assignments $p_0,p_1,p_2,p_3$ on $(u_B,v_B,\ell)$ with $p_i(u_B)=0$ and some  possible colourings of $u_C$, $u_D$, $v_C$, $v_D$ under the colouring strategy.}\label{tab:table}
\end{table}
If $u_B=v_B$ then only $p_0$ and $p_1$ are well-defined, but by Lemma~\ref{lem:atmost2}, both extend to full NAE-satisfying assignments for~$I_2$.  
Similarly, if $\neg u_B=v_B$ then only $p_2$ and $p_3$ are well-defined, but again both extend to full NAE-satisfying assignments for~$I_2$.
In all other cases however, all four rows give meaningful partial assignments, though Lemma~\ref{lem:atmost2} only guarantees that at least three are locally compatible (and therefore extend to a NAE-satisfying truth assignment).
The remaining columns give possible colourings for $u_C,u_D,v_C,v_D$, though the validity of these will depend on the choice of $\ell$ and whether the assignment $p_i$ is locally compatible.  
Note that if $u=u_B$, then we did not define $u_C$ and~$u_D$, and so columns $u_C$ and~$u_D$ should be ignored.  
If $u\neq u_B$ (that is, $u$ has type $C$ or $D$), then 
possibly $u_D$ also lies in the component corresponding to $e_v$, 
but in that case we must have $u=u_C$, because $u=u_D$ would place $u$ adjacent to $v=v_D$ in the component for $e_v$.  
In this situation, where $u_D$ lies in the clause component for $e_v$, it follows that $e_v$ is the clause $u_B\vee v_B\vee w$ (up to commutativity of $\vee$) and the assignment of $(0,1,1)$ to $(u_B,v_B,w)$ is a locally compatible assignment giving $v_B$ and $w$ the majority value in  $e_v$.  
The colouring strategy then colours $u_C$ by $2$, and $v_C$ can be coloured either $2$ or $0$, and $v_D$ either $0$ or $2$ respectively.
This achieves the desired like- and  unlike-colouring of $u$ and $v$.  
From now we may assume that if $u$ has type $C$ or $D$, then $u_C$ and $u_D$ are associated with a different component to the one associated with $e_v$.  It then follows that any choices available for $u_C$ are independent of those for $v_C$.
In particular, the colouring strategy always allows for $u_C$ to be coloured~$2$, and $u_D$ to be coloured by $\neg u_B$.  This is what is shown in the corresponding columns of Table~\ref{tab:table}.

If in Table~\ref{tab:table} we choose $\ell$ to be $w$, then the colours shown for $v_C$ and $v_D$ are allowed by the colouring strategy.
In the case of $p_0$ and $p_3$, literals $v_B$ and $w=\ell$ have the same truth value, so the vertices $v_C$ and $v_D$ (and $w_C$ and $w_D$) get a flexible choice between the colour $\neg v_B$ and $2$ (this is shown as $1/2$ in the $p_0$ row and $0/2$ in the $p_3$ row).
In the case of $p_1$ and $p_2$, we cannot know which of $v_B$ and $w$ is taking the majority value for $e_v$, but it is always possible for $v_D$ to take the colour $\neg v_B$ and for $v_C$ to be coloured $2$, which is what is shown in the table.

\begin{table}
\begin{tabular}{c|ccccc}
$(u,v)$:&$(B,C)$&$(B,D)$&$(C,C)$&$(C,D)$&$(D,D)$\\
\hline
$=$:&$p_3$&$p_2,p_3$&$p_0,p_1$&$p_0,p_3$&$p_0,p_1$\\
$\neq$:&$p_0,p_1$&$p_0,p_1$&$p_0,p_3$&$p_0,p_1$&$p_0,p_2$
\end{tabular}
\caption{\rule{0em}{1.4em}Some choices of rows witnessing like-colouring and unlike-colouring of $u$, $v$, where $=$ denotes cases that can be used to like-colour $u$ and $v$, and $\neq$ denotes cases that can be used to unlike-colour $u$ and $v$.}\label{tab:2}
\end{table}

There are the following five possible pairs of vertex types for $(u,v)$:
\[
(B,C),(B,D),(C,C),(C,D),(D,D),
\]
and we now explore how many have been flexibly coloured by the colourings in Table~\ref{tab:table}, where $\ell=w$ unless otherwise stated.  
Table~\ref{tab:2} gives a sample of up to two choices from $p_0,\dots,p_3$ that like- and unlike-colour $u$ and $v$, provided they are well-defined.

Case 1. When $u_B=v_B$, only $p_0$ and $p_1$ are well-defined, but both extend to full NAE-satisfying assignments for $I_2$, so the given colourings also extend to $3$-colourings of $\mathbb{G}_I$.  
As $u$ is either $u_C$ or $u_D$ in this situation, the two choices offered by $p_0$ are already sufficient to colour $u$ and $v$ in the desired ways.

Case 2. When $\neg u_B=v_B$, then only $p_2$ and $p_3$ are well-defined, but again both extend to a full NAE-satisfying assignments for $I_2$, so the given colourings extend to all of $\mathbb{G}_I$.  
This time we achieve like and unlike-colouring for $u_C$ with respect to the two possibilities for $v$ using $p_3$, but we achieve only an unlike-colouring for the case where $u=u_D$ and $v\in \{v_C,v_D\}$ (which implies $v=v_D$ due to the alphabetical ordering assumptions).  
In order to like-colour $u_D$ and $v$ for this case, it is necessary to colour both by $2$.  
Let $e_u$ be the clause corresponding to the component for $u$, and let $w'$ be a literal in $e_u$ whose underlying variable is distinct from that of $v_B$ and $w$.  
Then the assignment $(v_B,w,w')\mapsto (1,1,0)$ places $u_B=\neg v_B$ in the majority value with $w'$ in $e_u$ and $v_B$ in the majority value with $w$ in $e_v$.  
Due to the independent choice of colour in $\{1,2\}$ for $u_D$ and $\{0,2\}$ for $v_D$, a  like-colouring of $u$ and $v$ is achieved provided this truth assignment is locally compatible for $I_2$.
The clause $e_u$ contains the negation of a literal in $e_v$ (namely $u_b=\neg v_B$), so $e_u$ and $e_v$ are neighbouring clauses within a clause family.  It follows that $v_B$ (and $u_B$) is a $Z$-literal, and then that there is no clause of $I_2$ that contains all three of $w,w',v_B$.  So every assignment to these three literals is locally compatible, as required.

Case 3.  $u_B$ is neither $v_B$ nor $\neg v_B$.  
In this case, at least three of the four partial assignments in Table~\ref{tab:table} extend to a full NAE-satisfying truth assignment for $I_2$ by Lemma \ref{lem:atmost2}.
As seen in Table~\ref{tab:2}, only the case of like-colouring $u_B$ and $v_C$ fails to achieve two distinct choices (at least one of which extends to a valid $3$-colouring under the colouring strategy). 
If the assignment $p_3$ extends, then the proposed like-colouring of $u_B$ and $v_C$ in the $p_3$ row extends to a full $3$-colouring of $\mathbb{G}_I$ and we are done.
If the assignment $p_3$ fails to extend, then the ${\leq}3$-robust satisfiability of $I_2$ implies that it must explicitly violate a clause of $I_2$.  
So, up to commutativity of $\vee$, either the clause $(\neg u_b\vee v_B\vee w)$ or $(u_b\vee \neg v_B\vee\neg w)$  appears in $I_2$.  Because these clauses agree with $e_v$ on the underlying variables of both $v_B$ and $w$, it follows that $v_B$ and $w$ are $X$-variables and appear only positively in $I_2$; this then shows that  $(\neg u_b\vee v_B\vee w)$ is the only possible clause to fail $p_3$.  
Now, $(\neg u_b\vee v_B\vee w)$  could be $e_v$ itself or it could be some other clause $e$.

Subcase 3.1.  If $(\neg u_b\vee v_B\vee w)$ is $e_v$, consider the assignment $p_2$, still with the choice of $\ell$ as $w$.  
Under this assignment, $v_B$ has the majority value in $e_v$ (with $\neg u_B$), so it is possible to colour $v_C$ by $0$ and $v_D$ by $2$, thereby achieving the like-colouring of $u$ and $v$.

Subcase 3.2.  If $(\neg u_B\vee v_B\vee w)$ is a distinct clause to $e_v$, then  $e_v$ is  $(w'\vee v_B\vee w)$ for some literal $w'$ with an underlying variable distinct from that of $u_B$ and $v_B$.  
In this situation, $u_b$ and $w'$ must be distinct $Z$-literals, and occurring in distinct clause families.
So there is no clause of $I_2$ containing all three variables for $u_B,v_B,w'$.  
Therefore, if $\ell$ denotes $w'$ (instead of $w$), the assignment $p_3$ is  locally compatible, and the colouring given in Table~\ref{tab:table} achieves the desired like-colouring of  $u_B$ and $v_C$.
  
As the last remaining case, the proof is complete.
\end{proof}

The following corollary of Theorem \ref{thm:G3CI} has been extended to almost complete generality by the All or Nothing Theorem (ANT) of \cite{hamjac}.  The present proof takes a more direct path and was a precursor step in understanding that paved the way to results such as the ANT.
\begin{cor}\label{cor:uHK3}
It is \texttt{NP}-complete to decide membership of graphs in the universal Horn class of $\mathbb{K}_3$.  More particularly, when the instance $\mathbb{G}_I$ is ${\leq} 2$-robustly $3$-colourable then $\mathbb{G}_I$ is in the universal Horn class of $\mathbb{K}_3$, while when $\mathbb{G}_I$ is not $3$-colourable then $\mathbb{G}_I$ admits no homomorphisms into  $\mathbb{K}_3$ and hence is not in the universal Horn class of $\mathbb{K}_3$.
\end{cor}
\begin{proof}
This follows immediately from Theorem \ref{thm:G3CI} and Example \ref{eg:K3}(iii).
\end{proof}
The All or Nothing Theorem of \cite{hamjac} implies that for every $k\in\mathbb{N}$ there exists an $\ell\in\mathbb{N}$ such that it is \texttt{NP}-hard to distinguish the non-$3$-colourable graphs from those satisfying the property that every partial $3$-colouring on $k$ vertices extends to a full $3$-colouring, provided that it can be extended to any $\ell$ further vertices.  
Theorem~\ref{thm:G3C} shows \texttt{NP}-hardness when $k=2,\ell=0$.  
For $k\geq 3$ it is easy to see that \texttt{NP}-hardness in the $3$-colouring case will require $\ell\geq 1$, as the neighbours of any vertex of degree at least $3$ can have at most $2$ colours in any $3$-colouring.  
Is $\ell=1$ enough for \texttt{NP}-hardness when $k=3$?  This seems to be unknown.  
In general, it would be interesting to understand the minimum value of $\ell$, as a function of $k$, for \texttt{NP}-hardness of $3$-colouring.  The same questions can also be asked for $n$-colouring (with $n\geq 3$).  
We provide two propositions giving tentative initial results in this style.  
Proposition \ref{pro:edgerobust} gives some partial results on the \texttt{NP}-hardness of extendability of partial $3$-colourings in the case of $k=4$ and $\ell=2$, but only in the case of colourings of two pairs of adjacent vertices.  
Proposition \ref{pro:highercolouring} concerns $\lfloor (n-1)/2\rfloor$-robust $n$-colourability when $n\geq 5$ and uses recent advances in the so-called promise-CSP.

Let $\{i_1,i_2\}$ and $\{i_1',i_2'\}$ be two edges of $\mathbb{G}_I$, not identical.  There are always at least two colours appearing amongst $\{i_1,i_2,i_1',i_2'\}$, but there are some simple local configurations that, when present, can force there to be always exactly $2$ colours, or always exactly $3$ colours: see Figure \ref{fig:config}.
\begin{figure}
\begin{tikzpicture}
\node at (0,0) {
\begin{tikzpicture}
\node (1) [draw,thick,circle,inner sep=1mm, fill = white] at (0,0) {$i_1$};
\node (2) [draw,thick,circle,inner sep=1mm, fill = white] at (0,1) {$i_2$};
\node (1') [draw,thick,circle,inner sep=1mm, fill = white] at (2,0) {$i_1'$};
\node (2') [draw,thick,circle,inner sep=1mm, fill = white] at (2,1) {$i_2'$};
\node (3) [draw,thick,circle,inner sep=1mm, fill = white] at (1,0.5) {$\exists$};

\draw (1) -- (2) -- (3) -- (1') -- (2') -- (3) -- (1);
\end{tikzpicture}};
\node at (4.5,0) {\begin{tikzpicture}
\node (1) [draw,thick,circle,inner sep=1mm, fill = white] at (0,0) {$i_1$};
\node (2) [draw,thick,circle,inner sep=1mm, fill = white] at (0,1) {$i_2$};
\node (1') [draw,thick,circle,inner sep=1mm, fill = white] at (3,0) {$i_1'$};
\node (2') [draw,thick,circle,inner sep=1mm, fill = white] at (3,1) {$i_2'$};
\node (3) [draw,thick,circle,inner sep=1mm, fill = white] at (1,0.5) {$\exists$};
\node (4) [draw,thick,circle,inner sep=1mm, fill = white] at (2,0.5) {$\exists$};

\draw (1) -- (2) -- (3) -- (4) -- (1') -- (2') -- (4) -- (3) -- (1);
\end{tikzpicture}};

\node at (9,0) {\begin{tikzpicture}
\node (1) [draw,thick,circle,inner sep=1mm, fill = white] at (0,0) {$i_1$};
\node (2) [draw,thick,circle,inner sep=1mm, fill = white] at (0,1.5) {$i_2$};
\node (1') [draw,thick,circle,inner sep=1mm, fill = white] at (1.5,0) {$i_1'$};
\node (2') [draw,thick,circle,inner sep=1mm, fill = white] at (1.5,1.5) {$i_2'$};

\draw (1') -- (2) -- (1) -- (1') -- (2');
\end{tikzpicture}};

\end{tikzpicture}
\caption{Three local configurations forcing the number of colours on $\{i_1,i_2,i_1',i_2'\}$ in any valid $3$-colouring (other edges and vertices may also be present but the $\exists$ symbol means that such a vertex does appear).  In the first, exactly two colours can appear in $\{i_1,i_2,i_1',i_2'\}$.  In the second and third, exactly $3$ colours appear in $\{i_1,i_2,i_1',i_2'\}$t.  In the third configuration, we allow $i_2'$ to coincide with one of $i_1$ or $i_2$.}\label{fig:config}
\end{figure}

\begin{pro}\label{pro:edgerobust}
Let $\{i_1,i_2\}$ and $\{i_1',i_2'\}$ be {two non-identical edges of $\mathbb{G}_I$}.  Except in the three configurations given in Figure \ref{fig:config} \up(and permutations of these labellings\up), if there is a $3$-colouring {of $\mathbb{G}_I$}, then there is a $3$-colouring giving $\{i_1,i_2,i_1',i_2'\}$ exactly~$2$ colours and a $3$-colouring giving $\{i_1,i_2,i_1',i_2'\}$ exactly $3$ colours.
\end{pro}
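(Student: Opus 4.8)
The plan is to build on the $2$-robust $3$-colourability already established in Theorem \ref{thm:G3C}, and to track more carefully which colour patterns the two edges $\{i_1,i_2\}$ and $\{i_1',i_2'\}$ can achieve. First I would reduce the problem to a bounded-size local analysis: by $2$-robustness we already know any single nonadjacent pair can be like- or unlike-coloured, so the new content is about the \emph{joint} colour behaviour of four vertices spanning two edges. The natural approach is to observe that ``exactly $2$ colours on $\{i_1,i_2,i_1',i_2'\}$'' means the two edges use a common palette of size $2$ (so $\{i_1,i_2\}$ and $\{i_1',i_2'\}$ share both their colours), while ``exactly $3$ colours'' means the palettes together exhaust all three colours. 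I would phrase both as the existence of a suitable partial colouring of at most four vertices that is locally compatible, and then invoke the preservation properties already proved---principally $2$-robustness and the majority-value information in Lemma \ref{lem:NAE} parts (2),(3),(4)---to extend it to a full $3$-colouring of $\mathbb{G}$.

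Next I would organise the argument by the relative position of the two edges, mirroring the case structure in the proof of Theorem \ref{thm:G3C}. The key observation is that forcing exactly $2$ colours is obstructed precisely when some vertex is forced to differ from all of $\{i_1,i_2,i_1',i_2'\}$ (the first configuration of Figure \ref{fig:config}, where the shared neighbour $\exists$ is adjacent to one vertex of each edge, pinning a third colour between them); and forcing exactly $3$ colours is obstructed precisely when the four vertices are squeezed onto a common pair of colours by short even-length connections (the second and third configurations). So for each arrangement of the edges I would exhibit, using the flexibility results, a partial truth assignment on the underlying literals that realises the desired palette, excluding exactly the listed configurations. For the ``$2$-colour'' direction I would typically assign the relevant literals so that the two edges reuse the same two colours (choosing the special vertex's colour $2$ as one of them where convenient); for the ``$3$-colour'' direction I would force the literals at the two edges to take complementary majority values so that colour $2$ appears on one edge and the remaining two colours on the other.

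The step I expect to be the main obstacle is a careful enumeration showing that the \emph{only} obstructions are the three configurations listed, i.e.\ the converse direction. This requires verifying that whenever none of the Figure \ref{fig:config} patterns (up to relabelling) is present, there genuinely is enough local freedom---no hidden short cycle or shared clause structure forces a fixed palette. Because $\mathbb{G}$ was constructed to have no $4$-cycles (the parenthetical remark after edge rule H1), many potentially awkward short connections are ruled out automatically, and this is what I would lean on to show that a shared forced third colour can only arise through a genuine length-two bridge as in configuration one, while a forced two-colour collapse can only arise through the even bridges of configurations two and three. The bookkeeping here is routine but delicate: one must check each pairing of vertex types (literal, clause position, connector, special) against each admissible edge placement and confirm that outside the three named configurations both a $2$-colour and a $3$-colour realisation survive. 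I would present this as a finite case check, leaning throughout on the majority-value flexibility of Lemma \ref{lem:NAE} to supply the required extensions, rather than recomputing colourings from scratch.
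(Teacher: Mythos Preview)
Your plan would likely succeed, but it takes a substantially harder route than the paper and misses the one idea that makes the proof short.

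The paper's argument is purely graph-theoretic at this point and never returns to Lemma~\ref{lem:NAE} or to the vertex-type taxonomy (literal, connector, clause-position, special). Instead it exploits the triangulation property directly: for each edge $\{i_1,i_2\}$ choose a vertex $i_3$ completing a triangle on it, and similarly $i_3'$ for $\{i_1',i_2'\}$. In any valid $3$-colouring the palette used on $\{i_1,i_2\}$ is exactly the complement of the single colour of $i_3$, and likewise for the primed edge. Hence the two edges use the same two colours iff $i_3$ and $i_3'$ are like-coloured, and they cover all three colours iff $i_3$ and $i_3'$ are unlike-coloured. This collapses the whole question to $2$-robustness on the single pair $\{i_3,i_3'\}$. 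The no-$4$-cycle property (and a brief check that $i_3\neq i_3'$ and $i_3,i_3'\notin\{i_1,i_2,i_1',i_2'\}$ outside the excluded configurations) is what guarantees that $\{i_3,i_3'\}$ is a non-edge, so that $2$-robustness applies. The three configurations of Figure~\ref{fig:config} are precisely the cases where this reduction is blocked: $i_3=i_3'$ (first configuration), $i_3$ adjacent to $i_3'$ (second), or the two edges already share a vertex and a further adjacency forces a clique (third).

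Your approach---re-entering the NAE3SAT structure, invoking Lemma~\ref{lem:NAE}(2)--(4), and case-splitting over vertex types---can be made to work, but it reproduces the long case analysis of Theorem~\ref{thm:G3C} rather than building on its conclusion. There is also a soft gap in your outline: you speak of exhibiting a locally compatible partial colouring on ``at most four vertices'' and extending it, but only $2$-robustness has been proved, so you must explain how to reduce to two vertices. The triangulation trick \emph{is} that reduction; once you see it, the vertex-type bookkeeping and the appeals to Lemma~\ref{lem:NAE} become unnecessary.
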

\begin{proof}
We show that in cases other than those of Figure \ref{fig:config}, we may achieve both the situation of exactly two colours amongst $\{i_1,i_2,i_1',i_2'\}$ and three colours amongst $\{i_1,i_2,i_1',i_2'\}$.

{Case 1. $|\{i_1,i_2,i_1',i_2'\}|\leq 3$.  As $\{i_1,i_2\}\neq \{i_1',i_2'\}$ and the third configuration from Figure \ref{fig:config} is being avoided, we may assume without loss of generality that $i_1=i_1'$, but $\{i_2,i_2'\}$ is not an edge}.  
In this case we can use $2$-robust $3$-colourability to like-colour {$i_2$ and $i_2'$} to achieve exactly two colours in the set, and unlike-colour {$i_2$ and $i_2'$} to achieve all three colours.

Case 2. $|\{i_1,i_2,i_1',i_2'\}|=4$.  
In this case, avoiding the first and third configurations (up to permutations of $\{i_1,i_2\}$ and $\{i_1',i_2'\}$) means that we may assume that $\{i_1,i_2,i_1',i_2'\}$ does not contain triangle, and that the edges $\{i_1,i_2\}$ and $\{i_1',i_2'\}$ do not form a pair of triangles with some fifth vertex.  
In this case there are at most two edges between $\{i_1,i_2\}$ and $\{i_1',i_2'\}$.

Subcase 2.1.  \emph{There are two edges between $\{i_1,i_2\}$ and $\{i_1',i_2'\}$}.\\
 As there is no triangle, there is no loss of generality in assuming that the edges are $\{i_1,i_1'\}$ and $\{i_2,i_2'\}$.  But this is a $4$-cycle, and there are no such configurations in~$\mathbb{G}_I$.  So there is nothing to check.

Subcase 2.2. \emph{There is exactly one edge between $\{i_1,i_2\}$ and $\{i_1',i_2'\}$; say $\{i_1,i_1'\}$}.\\
By like colouring $i_2$ and $i_2'$ we have all three colours amongst $\{i_1,i_2,i_1',i_2'\}$.  So the goal is to achieve exactly $2$ colours amongst the four vertices.  
Let $i_3$ and $i_3'$ be vertices forming a triangle with $\{i_1,i_2\}$ and $\{i_1',i_2'\}$ respectively.  
Now $i_3$ cannot coincide with $i_3'$ as then $\{i_1,i_2,i_3\}$ would be a triangle sharing a common edge with the triangle $\{i_1,i_1',i_3\}$, and $\mathbb{G}_I$ has no such configurations.  
Also $i_3$ is not amongst $\{i_1',i_2'\}$, nor is $i_3'$ amongst $\{i_1,i_2\}$, because there is only one edge between the vertices of the two edges being considered in Subcase 2.2.  
Now $i_3$ and $i_3'$ cannot be adjacent because if so we would find the four cycle $i_3\sim i_3'\sim i_1'\sim i_1\sim i_3$, which does not appear in $I_3$ by Remark \ref{rem:4cycles}.  
Thus we may  like-colour $i_3$ and $i_3'$ to achieve exactly two colours amongst our four vertices. Thus Subcase 2.2 is complete.

Subcase 2.3.  \emph{There are no edges between $\{i_1,i_2\}$ and $\{i_1',i_2'\}$}.\\
  Again let $i_3$ and $i_3'$ be vertices forming triangles with $\{i_1,i_2\}$ and $\{i_1',i_2'\}$ respectively.  
  As there are no edges between $\{i_1,i_2\}$ and $\{i_1',i_2'\}$, it follows that  $i_3\notin\{i_1',i_2'\}$ and $i_3'\notin \{i_1,i_2\}$.  
  Also $i_3\neq i_3'$ {as otherwise $\{i_1,i_2,i_1',i_2',i_3=i_3'\}$ would be the first configuration in Figure \ref{fig:config}}, where we already know exactly two colours can be found amongst $\{i_1,i_2,i_1',i_2'\}$ under any valid $3$-colouring.  
  Now if $\{i_3,i_3'\}$ is an edge then we have the second configuration of Figure \ref{fig:config}, and have already observed that exactly $3$ colours appear amongst $\{i_1,i_2,i_1',i_2'\}$, under any valid $3$-colouring.   
  Thus we may now assume that $\{i_3,i_3'\}$ is not an edge.  But  then we can like-colour $i_3$ and $i_3'$  to place exactly two colours amongst $\{i_1,i_2,i_1',i_2'\}$, and unlike-colour $i_3$ and $i_3'$  to place exactly 3 colours amongst $\{i_1,i_2,i_1',i_2'\}$ as required.
\end{proof}
 
The next proposition uses the $\texttt{NP}$-hardness of distinguishing $k$-colourable graphs from $n$ colourable graphs, where $n>k\geq 3$.  At the time of writing, the  state of art is when $n=2k-1$; see Bul\'{\i}n, Krokhin and Opr\v{s}al \cite{BKO} or the same team with Barto \cite[Theorem 6.5]{BBKO}.
 \begin{pro}\label{pro:highercolouring}
 Let $k\geq 3$.  Then it is \texttt{NP}-hard to distinguish between the following two classes of graphs\up:
 \begin{itemize}
 \item graphs that are ${\leq}k$-robustly $(2k-1)$-colourable\up;
 \item graphs that are not $(2k-1)$-colourable.
 \end{itemize}
 \end{pro}
 \begin{proof}
 Let $\mathbb{G}$ be an instance of the promise: either $\mathbb{G}$ is $k$-colourable or is not $(2k-1)$-colourable.  We need to show that when $\mathbb{G}$ is $k$-colourable then it is $\leq k$-robustly $(2k-1)$-colourable.
 Let $p$ be any valid partial $2k-1$-colouring of at most $k$ vertices $v_0,\dots,v_{\ell-1}$.  We may assume that $\ell\geq 1$, as otherwise we may extend the empty domain of $p$ to include some vertex $v_0$ coloured arbitrarily.  Let $c$ be a valid $k$-colouring of $\mathbb{G}$.
 
Let $C\subseteq \{0,\dots,2k-1\}$ denote the set of colours used by $p$, and assume without loss of generality that $p(v_0)=0$.  If $|C|=k$ then $\ell=k$ and all of the vertices $v_0,\dots,v_{\ell-1}$ have different colours under $p$.  In this situation, there are $k-1$ remaining colours $\{0,\dots,2k-1\}\backslash C$ unused by $p$, and note that, up to a permutation of the colour names, we may assume that the $k$-colouring $c$ maps into $\{0\}\cup \big(\{0,\dots,2k-1\}\backslash C\big)$ in such a way that $c(v_0)=0$.  Now update $c$ on the colours $v_1,\dots,v_{\ell-1}$ by redefining $c$ to agree with $p$ on these points.  This is a valid $(2k-1)$-colouring that extends $p$.

Alternatively, if $|C|<k$, then there are at least $k$ colours unused by $p$, and we may assume that $c$ maps into $\{0,\dots,2k-1\}\backslash C$.  Again, we may update $c$ to agree with $p$ on the particular points $v_0,\dots,v_{\ell-1}$ to obtain a valid $(2k-1)$-colouring extending $p$.
 \end{proof}

 \subsection{Some comments.} 
 To finish this section, we make a brief comment about the prospect of following the direct approach to $2$-robust G3C sketched in \cite{AGK}.  {The approach in \cite{AGK}  starts from a result of Beacham \cite{bea} showing the \texttt{NP}-completeness of $2$-robust satisfiability for $3$-hypergraph $2$-colourability.  The $3$-hypergraph $2$-colourability problem is simply the positive version of NAE3SAT and the reduction to G3C in \cite{AGK} also follows a standard construction.  Unfortunately, not every edge  lies within a triangle, and while triangulation of edges by the addition of extra vertices obviously preserves the $3$-colourability or otherwise of a graph, it does not preserve the $2$-robust colourability.  We will require triangulated edges (or at least a condition equivalent to this) for our reduction in the next section.}

The author's original approach to $2$-robust $3$-colourability was independent of all of these possibilities, and instead arrived via a reduction from the Hamiltonian Circuit problem (via an intermediate encoding into SAT then NAE3SAT).  The idea is that if there is a Hamiltonian circuit in a graph, then we can also choose where to start this circuit, and in which direction to travel.  By varying these choices it is possible to again arrive at a reduction to G3C, in which yes instances are $2$-robustly satisfiable and no instances are not $3$-colourable at all.  However, the author was unable to push this to the results of the next section (due to essentially the same problem as for the \cite{AGK} approach) until a conversation with Standa \v{Z}vin\'y\ at the ALC Fest in Prague 2014 drew attention to the more flexible approach of Gottlob~\cite{got}, which we have revisited at the start of Section \ref{sec:sat}.
  
\section{$2$-robust positive 1-in-3SAT and separability.}\label{sec:monotone}
We now continue our series of reductions $I_0=I,I_1,I_2, I_3=\mathbb{G}_I$ by encoding the graph $\mathbb{G}_I$ into positive 1-in-3SAT; the main result is Theorem~\ref{thm:SP2}, which was the primary target of all previous sections in Part~\ref{part:sat}.  Recall from Section \ref{sec:CSPintro} that $\mathbbold{2}$ denotes the usual template for positive 1-in-3SAT as a constraint problem: the domain is the set $\{0,1\}$ and there is a single ternary relation $R=\{(0,0,1),(0,1,0),(1,0,0)\}$.

We define our construction starting from any graph $\mathbb{G}$, producing an instance~$\mathbbold{2}(\mathbb{G})$ of 1-in-3SAT.  The instance $I_4$ will be taken to be $\mathbbold{2}(I_3)$.

Construction of the positive 1-in-3SAT variables of $\mathbbold{2}(\mathbb{G})$ is straightforward: for each vertex $i$ of the graph $\mathbb{G}$ and each colour $j=0,1,2$, we construct the variable $x_{i,j}$ (which asserts ``vertex $i$ is coloured $j$'').

We now give the clauses of $\mathbbold{2}(\mathbb{G})$, which are of two kinds, \emph{type A2} and \emph{type B2}.   
\begin{itemize}
\item[A2] For each vertex $i$ we have the clause $(x_{i,0}\vee x_{i,1}\vee x_{i,2})$.  1-in-3 satisfaction of this clause corresponds to ``vertex $i$ is coloured by exactly one colour''.
\item[B2] For each triangle $\{i_1,i_2,i_3\}$ appearing in $\mathbb{G}$ and each colour $j=0,1,2$, we include the clause $(x_{i_1,j}\vee x_{i_2,j}\vee x_{i_3,j})$.  1-in-3 satisfaction of this clause corresponds to ``colour $j$ appears exactly once in the {triangle} $\{i_1,i_2,i_3\}$''.
\end{itemize}
In this construction, and for the remainder, we will not distinguish between clauses that differ only up to the order of appearance of their variables, so $(x_{i,0}\vee x_{i,1}\vee x_{i,2})$ is considered the same as $(x_{i,1}\vee x_{i,2}\vee x_{i,0})$ and so on.
In the case that every edge of $\mathbb{G}$ lies within a triangle, then it is easy to see that 1-in-3 satisfying truth assignments for the collection of type A2 and B2 clauses coincide precisely to valid $3$-colourings of $\mathbb{G}_I$ under the given interpretation of the variables.  {This gives the easy half of the next theorem (the first dot point).}
\begin{thm}\label{thm:1in3}
Let $\mathbb{G}$ be a graph in which every edge lies in a triangle. 
\begin{itemize}
\item If  $\mathbb{G}$ is \emph{not} $3$-colourable then $\mathbbold{2}(\mathbb{G})$ is \emph{not} 1-in-3 satisfiable.
\item If $\mathbb{G}$ is ${\leq} 2$-robustly satisfiable, then $\mathbbold{2}(\mathbb{G})$ is ${\leq} 2$-robustly 1-in-3 satisfiable.
\end{itemize}
\end{thm}
\begin{proof}
Only the second dot point remains to be established.  Assume $\mathbb{G}$ is ${\leq}2$-robustly $3$-colourable.
Let $x$ and $y$ be two distinct variables of $\mathbbold{2}(\mathbb{G})$; say $x=x_{i,j}$ and $y=x_{i',j'}$.  We must show that every locally compatible assignment of values to $x$ and $y$ extends to a 1-in-3 satisfying truth assignment.  

Case 1: $i=i'$.  In this case, local consistency ensures that not both $x$ and $y$ can take the value $1$ (due to clauses of type A2).  Without loss of generality, assume that $j=0$ and $j'=1$.  In this case, as $x\neq y$ we can easily range through the values $(0,0),(0,1),(1,0)$  for $(x,y)$ by colouring vertex $i$ by $2$, $1$ and $0$ respectively.  

Case 2:  $i\neq i'$.  As distinct vertices of $\mathbb{G}$ can unlike coloured, if $j\neq j'$ (say, $j=0$, $j'=1$) then we may achieve the following values of $(x,y)$ in the following ways:
\begin{itemize}
\item $(0,0)$ by colouring $i$ by $1$ and $i'$ by $0$.
\item $(0,1)$ by colouring $i$ by $2$ and $i'$ by $1$.
\item $(1,0)$ by colouring $i$ by $0$ and $i'$ by $2$.
\item $(1,1)$ by colouring $i$ by $0$ and $i'$ by $1$.
\end{itemize}
If $j=j'$ (say, $j=0$), then the possibilities depend on whether or not $\{i,i'\}$ is an edge.  If it is, then local consistency with clauses of kind B2 ensures we do not need to achieve $(x,y)$ taking the value $(1,1)$.  We may achieve the following values of $(x,y)$ in the following ways:
\begin{itemize}
\item $(0,0)$ by colouring $i$ by $1$ and $i'$ by $2$.
\item $(0,1)$ by colouring $i$ by $1$ and $i'$ by $0$.
\item $(1,0)$ by colouring $i$ by $0$ and $i'$ by $1$.
\item $(1,1)$ by colouring $i$ by $0$ and $i'$ by $0$, which is possible when $\{i,i'\}$ is \emph{not} an edge.  
\end{itemize}
This completes the proof.
\end{proof}
The instance $I_3=\mathbb{G}_I$ has every edge lying within a triangle and is either not $3$-colourable or is ${\leq} 2$-robustly $3$ colourable.  Thus the instance $I_4=\mathbbold{2}(I_3)$ satisfies the premise of precisely one of the two points in Theorem~\ref{thm:1in3}.
\begin{thm}\label{thm:SP2}
Let $\mathbbold{2}$ denote the usual template for positive 1-in-3SAT.  Any set of positive 3SAT instances containing the ${\leq}2$-robustly positive 1-in-3 satisfiable members of $\mathsf{SP}(\mathbbold{2})$ and contained within the members of $\CSP(\mathbbold{2})$ has \texttt{NP}-hard membership with respect to logspace many-one reductions.
\end{thm}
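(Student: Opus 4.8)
The plan is to exhibit a single logspace many-one reduction $\rho$ from SAT to monotone $1$-in-$3$ $3$SAT that realises the ``extreme separation'' promised in the introduction: satisfiable sources are sent into the lower class $\{\mathbb{B}\in\mathsf{SP}(\mathbbold{2}):\mathbb{B}\text{ is }2\text{-robustly }1\text{-in-}3\text{ satisfiable}\}$, while unsatisfiable sources are sent outside $\CSP(\mathbbold{2})$ altogether. Since every admissible $\mathcal{S}$ obeys $\{\mathbb{B}\in\mathsf{SP}(\mathbbold{2}):\mathbb{B}\ 2\text{-robustly satisfiable}\}\subseteq\mathcal{S}\subseteq\CSP(\mathbbold{2})$, such a $\rho$ then sends YES instances into $\mathcal{S}$ and NO instances into its complement, so $\rho$ reduces an \texttt{NP}-complete problem to membership in $\mathcal{S}$, uniformly over all admissible $\mathcal{S}$. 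The map $\rho$ is simply the composition of the logspace reductions already assembled in Part~\ref{part:sat}: SAT to $6$-robust monotone NAE$(21)$SAT (the theorem preceding Lemma~\ref{lem:NAE}, taking $k=6$), then $I^\sharp\mapsto f(I)$ into NAE$3$SAT (Lemma~\ref{lem:NAE}), then $f(I)\mapsto\mathbb{G}$ into triangulated G$3$C (Theorem~\ref{thm:G3C}), and finally $\mathbb{G}\mapsto\mathbb{B}$ into monotone $1$-in-$3$ $3$SAT via the clauses A2 and B2 of Section~\ref{sec:monotone}. A composition of logspace maps is logspace.

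For the NO direction I would trace unsatisfiability along the chain. An unsatisfiable SAT instance yields an NAE-unsatisfiable $I^\sharp$ (Gottlob's property that NO maps to NO), hence an NAE-unsatisfiable $f(I)$ (validity of the standard NAESAT-to-NAE$3$SAT reduction), hence a non-$3$-colourable $\mathbb{G}$ (validity of the NAE$3$SAT-to-G$3$C reduction), hence an unsatisfiable $1$-in-$3$ instance, since by construction the $1$-in-$3$ satisfying assignments of $\mathbb{B}$ are exactly the valid $3$-colourings of $\mathbb{G}$. Thus NO sources land outside $\CSP(\mathbbold{2})$, and in particular outside $\mathcal{S}$.

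For the YES direction, a satisfiable source produces a $2$-robustly $3$-colourable $\mathbb{G}$ (Theorem~\ref{thm:G3C}, fed by the properties of Lemma~\ref{lem:NAE}), and the case analysis preceding the present statement already shows the resulting $\mathbb{B}$ is $2$-robustly $1$-in-$3$ satisfiable. It remains to place $\mathbb{B}$ in $\mathsf{SP}(\mathbbold{2})$, for which, by the strongest case of Theorem~\ref{thm:separation}, it suffices that $\mathbb{B}\in\CSP(\mathbbold{2})$ and that both $=$ and $R$ are unfrozen-in. Unfrozenness of $=$ is immediate: for any two distinct variables the partial assignment $(0,1)$ is locally compatible, because the only binary projection of $R$ is $R'=\{(0,0),(0,1),(1,0)\}$, which contains both $(0,1)$ and $(1,0)$; hence it extends by $2$-robustness, colouring the pair distinctly.

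The genuine obstacle is unfrozenness of the \emph{ternary} $R$, since $2$-robustness does not control triples directly. I would argue combinatorially. Given a triple $T=(u,v,w)$ of distinct variables that is \emph{not} a constraint of $\mathbb{B}$, it suffices to find a satisfying assignment placing at least two of $u,v,w$ at value $1$, for then $T\notin R^{\mathbbold{2}}$. A pair from $T$ can be forced to $(1,1)$ by $2$-robustness precisely when its two variables do not co-occur in any A2 or B2 clause: co-occurrence is the only way a binary projection of $R$ (which avoids $(1,1)$) could be violated, while unary projections permit any value, so in the non-co-occurring case $(1,1)$ is locally compatible. The key observation is that if \emph{every} pair of $T$ co-occurred in a clause, then $T$ would itself be a clause: two variables co-occur only by sharing a vertex (an A2 clause) or by sharing a colour at two vertices of a common triangle (a B2 clause); pairwise vertex-sharing forces the three to be the three colour-variables of a single vertex, pairwise colour-sharing forces the three vertices to be pairwise adjacent and hence a triangle, and the mixed possibilities are readily seen to be inconsistent. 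Hence some pair of $T$ is free to take $(1,1)$, yielding the required assignment and showing $R$ is unfrozen-in. By Theorem~\ref{thm:separation} it follows that $\mathbb{B}\in\mathsf{SP}(\mathbbold{2})$, so YES sources land in the lower class, completing the reduction and the proof.
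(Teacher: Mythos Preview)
Your proof is correct and follows essentially the same route as the paper. The paper's own argument is terse---it simply invokes ``the chain of reductions in the article so far''---and the content you supply for $\mathsf{SP}(\mathbbold{2})$-membership (namely, that if every pair from a non-clause triple co-occurred in some clause then the triple would itself be an A2 or B2 clause) is exactly what the paper sketches in the opening paragraph of Section~\ref{sec:sat} and records afterwards as Observation~(I).

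One small omission: you handle only triples of \emph{distinct} variables when establishing that $R$ is unfrozen-in. For a triple with a repetition, say $u=v\neq w$, one needs $(\nu(u),\nu(u),\nu(w))\neq(0,0,1)$; the partial assignment $(u,w)\mapsto(1,0)$ is locally compatible and by $2$-robustness extends, while for $u=v=w$ any satisfying assignment suffices since $(0,0,0),(1,1,1)\notin R^{\mathbbold{2}}$. The paper covers this in the parenthetical remark within Observation~(I).
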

\begin{proof}
The chain of reductions $I\mapsto I_4$ maps (by a logspace many-one reduction) NO instances  of the \texttt{NP}-complete problem positive NAE3SAT
to NO instances of $\CSP(\mathbbold{2})$ and YES instances to the ${\leq}2$-robustly 1-in-3 satisfiable instances of $\mathbbold{2}$.  However when $I_4$ is ${{\leq} 2}$-robustly 1-in-3 satisfiable, it lies in $\mathsf{SP}(\mathbbold{2})$ because it satisfies the $\mathcal{R}\cup\{=\}$-separation properties for $\mathbbold{2}$, where $\mathcal{R}$ denotes the signature of $\mathbbold{2}$ (a single ternary relation).  
\end{proof}

In the next section we need a number of particular properties held by the instances of positive 1-in-3SAT that are obtained via this reduction; {as before, we express them in terms of $\mathbbold{2}(\mathbb{G})$ for a general graph $\mathbb{G}$}.  
Before we list these properties, let us say that two pairs of variables $\{v,w\}$ and $\{x,y\}$ are \emph{linked} in an instance of 1-in-3SAT if there is a variable $z$ such that $(v\vee w\vee z)$ and $(x\vee y\vee z)$ are clauses appearing in $\mathbbold{2}(\mathbb{G})$.  
(The reader is reminded that we consider clauses to be the same if they differ only up to the order of appearance of variables, so the common variable in a pair of linked clauses does not need to appear in the final position in general.)  {We define the \emph{link graph} of $\mathbbold{2}(\mathbb{G})$ to be the graph $\operatorname{link}(\mathbbold{2}(\mathbb{G}))$ whose vertices are the $2$-element sets of variables of $\mathbbold{2}(\mathbb{G})$ and where two such vertices are adjacent when they are linked.  
We will also say that a vertex $\{x,y\}$ of $\operatorname{link}(\mathbbold{2}(\mathbb{G}))$ is of \emph{type A} when $x,y$ form part of a clause of type A2, and are of \emph{type B} when $x,y$ are part of a clause of type B2.  
All other pairs of vertices are of \emph{type C}; note that type C  vertices are isolated in $\operatorname{link}(\mathbbold{2}(\mathbb{G}))$.}
In the following lemma, ``distinct clauses'' means clauses that differ by at least one variable.
\begin{lem}\label{lem:fourprops}
{Let $\mathbb{G}$ be a graph in which every edge lies within a triangle and there are no 4-cycles.}
{The following properties hold for $\mathbbold{2}(\mathbb{G})$.}
\begin{enumerate}[\rm(I)]
\item No two distinct clauses {of $\mathbbold{2}(\mathbb{G})$} share more than one variable.
\item {Let $x,y,z$ be variables of $\mathbbold{2}(\mathbb{G})$, not necessarily distinct and such that the clause $(x\vee y\vee z)$ does \emph{not} appear amongst the constructed clauses of $\mathbbold{2}(\mathbb{G})$.  Then either $|\{x,y,z\}|\leq 2$ or at least one of the pairs $\{x,y\}$, $\{x,z\}$, $\{y,z\}$ is of type C. }
\item Let $w,x,y,z$ be not necessarily distinct variables of $\mathbbold{2}(\mathbb{G})$.  {Then either $|\{w,x,y,z\}|\leq 3$ or there is a two element subset $\{u,v\}\subseteq\{w,x,y,z\}$ with $\{u,v\}$ of type C.}
\item {The link graph $\operatorname{link}(\mathbbold{2}(\mathbb{G}))$ is a disjoint union of cliques.  Moreover these cliques are either degenerate isolated vertices or cliques of size $2$ \up(that is, isolated edges\up).}
\end{enumerate}
\end{lem}   
\begin{proof}
Because $\mathbb{G}$ contains no $4$-cycles it follows that no two triangles of $\mathbb{G}$ share a common edge.  Property (I) follows immediately from this and the construction of~$\mathbbold{2}(\mathbb{G})$ from $\mathbb{G}$.

{For (II), assume that $(x\vee y\vee z)$ fails to appear in $\mathbbold{2}(\mathbb{G})$ and $|\{x,y,z\}|=3$.  If $\{x,y\}$ is of type C then we are done.  Now consider when 
 $\{x,y\}$ is of type $A$.  Then there is no loss of generality in assuming that $x=x_{i,0}$, $y=x_{i,1}$ and $z=x_{j,k}$, where $j\neq i$.  Now select $\ell\in\{0,1\}$ such that $k\neq \ell$, and find that $\{x_{i,\ell},z\}$ is of type C.  When $\{x,y\}$ is of type B there is no loss of generality in assuming that $x=x_{i_1,0}$, $y=x_{i_2,0}$ and $z=x_{j,k}$ where either $k\neq 0$ or $\{i_1,i_2,j\}$ is a triple of pairwise distinct vertices of $\mathbb{G}$ that do not form a triangle.  If $k\neq 0$ then there is a choice of $i\in \{i_1,i_2\}$ such that $j\neq i$.  Then $\{x_{i,0},z\}$ is of type C.  \ If $k=0$ but $\{i_1,i_2,j\}$ does not form a triangle, then given that $\{i_1,i_2\}$ is an edge of $\mathbb{G}$  we must have that one of $\{i_1,j\}$ or $\{i_2,j\}$ is not an edge of $\mathbb{G}$.  Without loss of generality assume $\{i_1,j\}$ is not an edge.  Then $\{x_{i_1,0},x_{j,k}\}$ is of type C.  }
 
{Observation (III) will follow immediately from Observation (II) unless both $(w\vee x\vee y)$ and $(x\vee y\vee z)$ are clauses in $I_4$.  But then Observation (I) shows that these clauses must be identical (up to a permutation of the position of variables in the clause), so that $w=z$ and we have $|\{w,x,y,z\}|\leq 3$.}

{For  (IV), assume that $\{y_1,z_1\}$ links to $\{y_2,z_2\}$ which links to $\{y_3,z_3\}$.  
So there are variables $u,v$ such that $(y_1\vee z_1\vee u)$, $(y_2\vee z_2\vee u)$ as well as $(y_2\vee z_2\vee v)$ and $(y_3\vee z_3\vee v)$ are clauses of $\mathbbold{2}(\mathbb{G})$, up to a permutation of the position of variables in the clause.  However, $(y_2\vee z_2\vee u)$ and $(y_2\vee z_2\vee v)$ share more than one variable (this uses the fact that every clause of $\mathbbold{2}(\mathbb{G})$ contains three variables), so must coincide by~(I).  
Hence  $u=v$ showing that $\{y_1,z_1\}$ links to $\{y_3,z_3\}$.  
Now observe from the definition of clauses of type A2 that there can be no edge between two vertices of type A, while the fact that no triangle shares a common edge similarly ensures that there can be no edge between vertices of type B.  
Thus all edges in $\operatorname{link}(\mathbbold{2}(\mathbb{G}))$ are between type A and B, which implies it is bipartite.  Because every nontrivial component is a clique it then follows that all nontrivial components of $\operatorname{link}(\mathbbold{2}(\mathbb{G}))$ are single edges.}
\end{proof}

\part{Applications to variety membership}\label{part:varieties}
\section{Variety membership for the semigroup ${\bf B}_2^1$}\label{sec:B21}
Recall that the variety generated by an algebraic structure $\mathbf{A}$ is the class of all algebras of the same signature that satisfy the equations holding on $\mathbf{A}$; equivalently that are homomorphic images of subalgebras of direct powers of $\mathbf{A}$.  {When $\mathbf{A}$ is finite}, the pseudovariety of $\mathbf{A}$ is the class of finite algebras in the variety of $\mathbf{A}$, but more generally a pseudovariety is a class of finite algebras of the same signature that is closed under taking homomorphisms, subalgebras and finitary direct products.  The finite part of any variety is a pseudovariety, but not every pseudovariety is of this form.  The computational problem of deciding membership of finite algebras in a variety coincides with the problem of deciding membership in the pseudovariety of finite members.  For this reason, the results of the present section are cast in the setting of pseudovarieties.

The six element monoid ${\bf B}_2^1=\langle 1,a,b\mid a^2=b^2=0, aba=a, bab=b\rangle$ is the most ubiquitous minimal counterexample in problems relation to varieties of semigroups.
The possible intractability of the computational problem of deciding membership of finite semigroups in the variety generated by ${\bf B}_2^1$ is perhaps the most obvious unresolved problem relating to ${\bf B}_2^1$ and so it is not surprising that this has appeared in a number of places in the literature including Problem 4 of Almeida \cite[p.~441]{alm}, Problem 3.11 of Kharlampovich and Sapir \cite{khasap} and page 849 of Volkov, Gol$'$dberg and Kublanovksi\u{\i} \cite{VGK}.  We now use the results of the previous section to show that this computational problem is \texttt{NP}-hard.   In fact we show this true for a large swathe of finite semigroups: any semigroup whose pseudovariety contains that of ${\bf B}_2^1$ and lies within the join of the pseudovariety of ${\bf B}_2^1$ with the pseudovariety $\mathsf{L}{\bf DS}$ {(here the font variation follows standard convention due to the fact that $\mathsf{L}$ is a class operator applied to the smaller pseudovariety ${\bf DS}$; see Almeida \cite{alm} for example).}

We build on the reduction to $2$-robust positive 1-in-3SAT in the previous section, {continuing with $I_4$ as our input structure for the next reduction}.  The notation $V_{I_4}$ will denote the set of variables appearing in $I_4$.  Recall that we consider clauses only up to the rearrangement of the variables they contain.  

We construct a semigroup ${\bf S}_{I_4}$ by way of the following semigroup presentation.  The semigroup ${\bf S}_{I_4}$ is generated from the following elements: 
\begin{enumerate}
\item $1$, a multiplicative identity element;
\item $0$, a multiplicative $0$ element;
\item an element $\ba$ and an element $\bb$;
\item for each variable $v$ of $I$ an element $\ba_v$.
\end{enumerate}
The semigroup is subject to the following rules (where $u,v,u_1,u_2,v_1,v_2,w$ are variables in the instance $I_4$):
\begin{enumerate}
\item ${\bf a}{\bf b}{\bf a}={\bf a}$, ${\bf b}{\bf a}{\bf b}={\bf b}$, ${\bf a}{\bf a}={\bf b}{\bf b}=0$ (so that $1, {\bf a},{\bf b}$ generate a copy of ${\bf B}_2^1$);
\item $\ba_u\ba_v=0$ if there is no clause containing both $u$ and $v$ (or if $u=v$);
\item $\ba_u\ba_v=\ba_v\ba_u$;
\item $\ba_u\ba_v\ba_w=\ba$ whenever $u\vee v\vee w$ is a clause in $I_4$;
\item $\ba_{u_1}\ba_{u_2}=\ba_{v_1}\ba_{v_2}$ if $\{u_1,u_2\}$ and $\{v_1,v_2\}$ are linked pairs of variables. 
\item {$\bb\ba_{u}\ba_{v}\bb=\bb\ba_u\bb=0$.}
\end{enumerate}
Observe that conditions (4) and (6) show that $\ba$ and $0$ are redundant as generators, and ${\bf S}_{I_4}$ can be generated from $1,\bb,\ba_v$ (for variables $v$) alone.

When $u$ and $v$ appear in some common clause, then we let $[uv]$ denote the set of all words $xy$ where $x$ and $y$ are variables of $I_4$ and $\{u,v\}$ is linked to $\{x,y\}$; so $[uv]=[xy]=[yx]$ in this case.   
{We will write $\ba_{[uv]}$ to denote the value $\ba_u\ba_v$ and observe that this notation is consistent with condition (5), which gives  $\ba_{[uv]}=\ba_u\ba_v=\ba_x\ba_y=\ba_{[xy]}$ when $\{u,v\}$ is linked to $\{x,y\}$.}

We considered clauses $u\vee v\vee w$ only up to permutations of the variables within them, which is consistent with the commutativity condition (3) and implies that condition (5) also holds for all permutations of the product $\ba_u\ba_v\ba_w$.
Also, for any variable $u$ we have $\ba_u\ba_u=0$ by (2), because of the property that the variable $u$ does not appear twice in any single clause.    
Finally, we observe that for any quadruple of variables $v_1,v_2,v_3,v_4$ (not necessarily distinct) we have $\ba_{v_1}\ba_{v_2}\ba_{v_3}\ba_{v_4}=0$.  
Indeed, if $|\{v_1,v_2,v_3,v_4\}|\leq 3$ then (2) and (3) yield $0$ as the value of the product, while if  $|\{v_1,v_2,v_3,v_4\}|=4$, condition (III) from Lemma~\ref{lem:fourprops} shows that there is $2$-element subset of $\{v_1,v_2,v_3,v_4\}$ of type C, from which (2) and (3) again imply the value of~$0$.

Thus the elements of ${\bf S}_{I_4}$ are $A\cup A\bb A\cup\{0\}$, where 
\[
A:=\{1,\ba\}\cup\{\ba_u\mid u\in V_{I_4}\}\cup \{\ba_{[uv]}\mid u,v\in V_{I_4}\text{ appear in a single clause}\}.
\]
We mention that the equality in condition (5) of the presentation does not lead to further equalities between the elements just listed: this is due to property (IV) of Lemma \ref{lem:fourprops} and the equality (4) of the presentation.
In particular the semigroup~${\bf S}_{I_4}$ can be constructed in logspace from the instance $I_4$, given that the instance $I_4$ satisfies the particular properties identified above (itself constructible in logspace from $I_3$, then from $I_2$, then from $I_1$ and finally  from $I$).   

{In the following lemma we use $\mathsf{V}_{\rm fin}({\bf B}_2^1)$ to denote the pseudovariety generated by ${\bf B}_2^1$ (it consists of the finite members of the variety generated by ${\bf B}_2^1$).}
\begin{lem}\label{lem:isinB21}
If $I_4$ is 1-in-3 satisfiable, then ${\bf S}_{I_4}\in\mathsf{V}_{\rm fin}({\bf B}_2^1)$.
\end{lem}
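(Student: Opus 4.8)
The plan is to place $S_I$ in $\mathbb{V}({\bf B}_2^1)=\mathsf{HSP}({\bf B}_2^1)$ by realising it as a homomorphic image of a subsemigroup of a direct power of ${\bf B}_2^1$. Before doing so I would record why the $\mathsf H$ operator is unavoidable here. Each $\ba_v$ satisfies $\ba_v^2=0$ (rule~(2) with $u=v$), and any $\ba_u,\ba_v$ sharing a clause commute (rule~(3)); hence under any homomorphism $\phi\colon S_I\to{\bf B}_2^1$ each $\phi(\ba_v)$ lies in the set $\{0,a,b\}$ of square-zero elements, and since $a,b$ do not commute the three images attached to a clause cannot use both letters. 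A product of three commuting square-zero elements of ${\bf B}_2^1$ is $0$, so $\phi(\ba)=\phi(\ba_u\ba_v\ba_w)=0$ for every clause, and then $\phi(\bb)=\phi(\bb\ba\bb)=0$ as well. Thus no homomorphism into ${\bf B}_2^1$ separates $\ba$ from $0$, so $S_I\notin\mathsf{SP}({\bf B}_2^1)$, and the defining relations must be restored by a genuine quotient.

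Accordingly I would build a cover as follows. Let $\mathcal F$ be the free semigroup on the generators $\{0,1,\ba,\bb\}\cup\{\ba_v\}$ and let $\rho$ be the congruence on $\mathcal F$ generated by the defining relations, so that $\mathcal F/\rho=S_I$. I choose a family of coordinate homomorphisms $\phi_i\colon\mathcal F\to{\bf B}_2^1$, set $\Phi=(\phi_i)_i$, and take $T=\Phi(\mathcal F)\le({\bf B}_2^1)^{N}$. Crucially the $\phi_i$ are \emph{not} required to respect the presentation of $S_I$. The map returning each generator tuple to the corresponding generator of $S_I$ extends to a surjective homomorphism $T\twoheadrightarrow S_I$ exactly when $\ker\Phi\subseteq\rho$, that is, when the coordinate tuple of a word determines its value in $S_I$. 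The entire task is therefore to choose enough coordinates to separate the distinct normal-form elements listed just before the lemma, while never separating words already equal in $S_I$.

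I would use two families of coordinates. A single skeleton coordinate $\phi_\ast$ sends $\ba\mapsto a$, $\bb\mapsto b$ and every $\ba_v\mapsto 1$; this embeds the copy of ${\bf B}_2^1$ faithfully, separates $0,1,\ba,\bb,\bc,\bd$, and is precisely the coordinate keeping $\ba$ off $0$ inside $T$. The second family is indexed by the $1$-in-$3$ satisfying assignments $\nu$ of $I$. Using the matrix-unit picture $a=e_{12}$, $b=e_{21}$, $ab=e_{11}$, $ba=e_{22}$, each $\nu$ determines a coordinate $\phi_\nu$ sending $\ba_v$ to $a$ or $0$ according to $\nu(v)$ (and $\bb\mapsto b$); since $a$ and $b$ fail to commute, such coordinates record, for a decorated word $\bb\ba_u$, $\ba_u\bb$, $\ba_u\bb\ba_v$, both which variables occur and on which side of $\bb$ they sit, and they send the two-variable products $\ba_u\ba_v$ to $0$, separating these from the singletons $\ba_u$. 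This is the step that uses the hypothesis: $2$-robust satisfiability (Theorem \ref{thm:SP2}), together with Observations (I)--(IV), guarantees, for any prescribed locally compatible value-pattern on the variables occurring in a given pair of normal forms, a satisfying assignment realising it, so the family $\{\phi_\nu\}$ separates all the variable-level elements $\ba_u$, $\ba_{[uv]}$, $\ba_u\bb\ba_v$, $\ba_{[u_1u_2]}\bb\ba_{[v_1v_2]}$ and their one-sided decorations.

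The main obstacle is the verification that $\ker\Phi\subseteq\rho$: the coordinates must identify exactly the pairs that the presentation identifies, and no others. The delicate relations are commutativity (3) and linking (5), which are enforced only by the final quotient and not in any single coordinate, so I must check that whenever two canonical words agree in every coordinate they are already linked- (or commute-)equal in $S_I$. This is governed by Observation (III), that linked pairs form a disjoint union of cliques, which makes the identifications forced by rule~(5) globally consistent across all the $\phi_\nu$, and by Observation (II), that no two distinct clauses share two variables, which rules out spurious coincidences among the products $\ba_u\bb\ba_v$. Once these compatibility checks are in place, the induced map $r\colon T\to S_I$ is a well-defined surjection and $S_I\in\mathsf{HSP}({\bf B}_2^1)=\mathbb{V}({\bf B}_2^1)$.
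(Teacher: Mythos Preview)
Your overall architecture is the same as the paper's: exhibit $S_I$ as a quotient of a subsemigroup of a power of ${\bf B}_2^1$, with coordinates indexed by satisfying assignments. Your opening paragraph showing $S_I\notin\mathsf{SP}({\bf B}_2^1)$ is correct and a nice piece of motivation that the paper does not include. However, the specific coordinate maps you choose do not work.

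The problem is the assignment $\phi_\nu(\ba_v)\in\{a,0\}$. With this choice, for any pair $u,v$ appearing together in a clause you get $\phi_\nu(\ba_u\ba_v)=0$ for \emph{every} $\nu$ (since at most one of the two factors is $a$, the other $0$), and $\phi_\ast(\ba_u\ba_v)=1$. Thus all of the distinct elements $\ba_{[uv]}$ of $S_I$ receive the \emph{same} tuple in $T$, and worse, this common tuple coincides with $\Phi(\ba_w\ba_w)$, which equals $0$ in $S_I$. So $\ker\Phi\not\subseteq\rho$ and your map $T\to S_I$ is not well defined. The same collapse kills every element of $S_I$ carrying a subscript $[uv]$: the products $\bb\ba_{[uv]}$, $\ba_{[uv]}\bb$, $\ba_{[uv]}\bb\ba_w$, $\ba_{[u_1u_2]}\bb\ba_{[v_1v_2]}$ are all sent to a tuple that does not depend on $[uv]$. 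Your claim that ``the family $\{\phi_\nu\}$ separates all the variable-level elements $\ba_u$, $\ba_{[uv]}$, \dots'' is therefore false.

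The paper repairs exactly this by sending $\ba_v$ to $1$ or $a$ (rather than $a$ or $0$): set $\alpha_v(\nu)=1$ if $\nu(v)=0$ and $\alpha_v(\nu)=a$ if $\nu(v)=1$. Then $\alpha_u\alpha_v$ takes values in $\{1,a\}$ and carries genuine information; in particular $\alpha_u\alpha_v\alpha_w=\alpha$ on the nose for every clause, so no skeleton coordinate is needed. The quotient is then by the ideal $U$ of tuples with a zero entry, which is what enforces rule~(2) rather than rule~(4). The separation of distinct $\alpha_{[uv]}$ uses Observation~(I) exactly as you anticipated, but it only goes through because the values live in $\{1,a\}$ rather than $\{a,0\}$. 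If you rewrite your $\phi_\nu$ this way and drop $\phi_\ast$, your outline becomes the paper's proof.
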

\begin{proof}
If $I_4$ is 1-in-3 satisfiable then it is ${\leq}2$-robustly 1-in-3 satisfiable by Theorem~\ref{thm:1in3}.
Let ${\bf T}$ be the subsemigroup of $({\bf B}_2^1)^{\hom(I_4,\mathbbold{2})}$ generated by the maps 
\[
\beta,\alpha,\alpha_{v},1\colon\hom(I_4,\mathbbold{2})\to {\bf B}_2^1
\] 
(for each $v\in V_{I_4}$) defined by their action on $\phi\in \hom(I_4,\mathbbold{2})$ as follows:
\begin{align*}
\beta(\phi)&:=b;\\
\alpha(\phi)&:=a;\\
\alpha_v(\phi)&:=
\begin{cases}
1&\text{ if $\phi(v)=0$,}\\
a&\text{ if $\phi(v)= 1$;}
\end{cases}\\
1(\phi)&:=1.
\end{align*}
Let $U\subseteq T$ (the underlying universe of ${\bf T}$) consist of all elements $t$  of ${\bf T}$ that contain a $0$-entry; that is, there is $\phi\in \hom(I_4,\mathbbold{2})$ with $t(\phi)=0$.  The set $U$ is trivially seen to be an ideal of ${\bf T}$, and we claim that  ${\bf S}_{I_4}$ is isomorphic to the Rees quotient ${\bf T}/U$ under the map $\iota$ defined by $\ba_v\mapsto \alpha_{v}$, ${\bf b}\mapsto \beta$, $\ba\mapsto \alpha$ and $1\mapsto 1$.  
We first show that~$\iota$ is bijective on the generators, or equivalently, that there are no equalities between the elements $1$, $\beta$, $\alpha$ and the various $\alpha_v$.  This is trivially true between $1$, $\beta$ and $\alpha$.  For each variable $v$ in $V_{I_4}$, the ${\leq} 2$-robust satisfiability of $I_4$ ensures there are homomorphisms $\nu_1,\nu_2:I_4\to \mathbbold{2}$ with $\nu_1(v)=1$ and $\nu_2(v)=0$.  Then $\alpha_v(\nu_1)=a$ and $\alpha_v(\nu_2)=1$.  So $\alpha_v\notin\{1,\alpha,\beta\}$.  Similarly, for distinct variables $u,v$ it is locally compatible to satisfy $I_4$ by a homomorphism $\nu$ with $\nu(u)\neq \nu(v)$, and then $\alpha_u(\nu)\neq \alpha_v(\nu)$.  This shows that $\iota$ maps the generators of ${\bf S}_{I_4}$ bijectively to those of ${\bf T}/U$.

We now verify that the rules (1)--(6) defining ${\bf S}_{I_4}$ hold for the corresponding elements of ${\bf T}/U$, which will show that $\iota$ is a surjective homomorphism.  It will then remain to show that elements distinct in ${\bf S}_{I_4}$ correspond to distinct elements of ${\bf T}/U$.

Rules  (1) and (3) are trivial to verify.  For rule (2), observe that when $u,v$ are variables  not both appearing in the same clause, then the partial assignment $(u,v)\mapsto (1,1)$ is locally compatible, so there is $\nu\in\hom(I_4,\mathbbold{2})$ with $\nu(u)=\nu(v)=1$.  Then on coordinate $\nu$ we have both $\alpha_u(\nu)=a$ and $\alpha_v(\nu)=a$ so that $[\alpha_u\alpha_v](\nu)=0$, showing that $\alpha_u\alpha_v\in U$ (or is $0$ in ${\bf T}/U$). For rule~(4), note that the definition of 1-in-3 satisfaction ensures that exactly one variable in each clause $(x\vee y\vee z)$ takes the value $1$, which in turn implies that exactly one of $\alpha_x(\nu),\alpha_y(\nu),\alpha_z(\nu)$ takes the value $a$, with the others taking the value $1$.  Thus $[\alpha_x\alpha_y\alpha_z](\nu)=a$ (for all $\nu\in\hom(I,\mathbbold{2})$), showing that $\alpha_x\alpha_y\alpha_z=\alpha$ when $(x\vee y\vee z)$ is a clause in $I_4$.

For the rule (5), consider $\{u_1,u_2\}$ and $\{v_1,v_2\}$ such that $(w\vee u_1\vee u_2)$ and $(w\vee v_1\vee v_2)$ are clauses in $I_4$.  Let $\nu\in\hom(I_4,\mathbbold{2})$ be arbitrary.  
If $\nu(w)=1$, then $\nu(u_1)=\nu(u_2)=\nu(v_1)=\nu(v_2)=0$ and so $\alpha_{u_1}\alpha_{u_2}(\nu)=\alpha_{v_1}\alpha_{v_2}(\nu)=1$.  
If $\nu(w)=0$, then $\{\nu(u_1),\nu(u_2)\}=\{\nu(v_1),\nu(v_2)\}=\{0,1\}$ and so $\alpha_{u_1}\alpha_{u_2}(\nu)=\alpha_{v_1}\alpha_{v_2}(\nu)=a$.  
As $\nu\in\hom(I_4,\mathbbold{2})$ is arbitrary, we have $\alpha_{u_1}\alpha_{u_2}=\alpha_{v_1}\alpha_{v_2}$.  
As for elements of ${\bf S}_{I_4}$, we now write $\alpha_{[uv]}$ to denote the product $\alpha_u\alpha_v$.

For the rule (6), consider any pair $\{u,v\}$.  If $\{u,v\}$ is of type C then $\alpha_u\alpha_v=0$ in ${\bf T}/U$, so we now assume it is of type A or B. \  By ${\leq} 2$-robust satisfiability there is $\nu\in\hom(I_4,\mathbbold{2})$ such that $\nu(u)=\nu(v)=0$, and then $\alpha_{[uv]}(\nu)=1$.  So in ${\bf T}$ we have $\beta\alpha_{[uv]}\beta(\nu)=b 1 b=0$, showing that $\beta\alpha_{[uv]}\beta\in U$.  Thus in ${\bf T}/U$ we have $\beta\alpha_{[uv]}\beta=0$.  The case of $\beta\alpha_u\beta$ is almost identical, using the $1$-robust satisfiability of $I_4$ to find a $\nu\in\hom(I_4,\mathbbold{2})$ with $\nu(u)=0$.

This shows that the map $\iota:{\bf S}_{I_4}\to {\bf T}/U$ is a surjective homomorphism.  To complete the proof we must show that $\iota$ is injective.
This has already been verified for the generators.
Observe that each element $\alpha_u$ has a coordinate equal to $1$.  Similarly, if $u,v$ appear in some common clause of $I_4$, then, as $(u,v)\mapsto (0,0)$ is locally compatible, there is a coordinate $\nu\in \hom(I,\mathbbold(2))$ with $\alpha_u\alpha_v(\nu)=1$.  Thus elements of the form $\alpha_u$ or $\alpha_u\alpha_v$ (where $u,v$ appear in some clause of $I$) are distinct from any product involving  $\alpha,\beta$.

We have already verified that if $u\neq v$ then $\alpha_u\neq \alpha_v$.  Observe now that if $u$ and $v$ are distinct variables appearing in some clause of $I_4$ and $w$ is any variable, then without loss of generality we have $w\neq u$, so that $(w,u)\mapsto (0,1)$ is locally compatible.  Hence there is a coordinate $\nu\in \hom(I_4,\mathbbold(2))$ in which $\alpha_u\alpha_v(\nu)=a\neq 1=\alpha_w(\nu)$.  Hence no $\alpha_w$ is equal to any $\alpha_u\alpha_v$.

Next, consider when $u,v$ appear in some clause and $x,y$ appear in some clause but $\{u,v\}$ and $\{x,y\}$ are not linked.  Let $w$ be such that $(w\vee u\vee v)$ is a clause in $I$.  Now $(w\vee x\vee y)$ is not a clause, so by Lemma \ref{lem:fourprops} (II) {we have a pair from $\{w,x,y\}$ that has type C.  As $\{x,y\}$ is not of type C there is no loss of generality in assuming that $\{w,x\}$ is of type C.  The assignment $\nu$ given by $(\nu(w),\nu(x))=(1,1)$ is locally compatible so extends to a 1-in-3 satisfying assignment $\nu\in \hom(I_4,\mathbbold{2})$ that fails to 1-in-3 satisfy $(w\vee x\vee y)$.  Because $x,y$ and $u,v$ each  appear in some clause of $I_4$ it follows that
$(\nu(w),\nu(x),\nu(y))=(1,1,0)$, while $(\nu(w),\nu(u),\nu(v))=(1,0,0)$.} Then $\alpha_{[uv]}(\nu)=1$ while $\alpha_{[xy]}(\nu)=a$.  Thus $\alpha_{[uv]}\neq \alpha_{[xy]}$ if $[uv]\neq [xy]$.

We can now verify that there are no further equalities amongst the elements of ${\bf T}/U$ aside than for corresponding elements of ${\bf S}_{I_4}$.  Set $\overline{A}$ to be the analogue of the set $A$ in ${\bf T}/U$:
\[
\overline{A}:=\{1,\alpha\}\cup\{\alpha_u\mid u\in V_{I_4}\}\cup \{\alpha_{[uv]}\mid u,v\in V_{I_4}\text{ appear in a single clause}\}.
\]
We have shown that for any distinct $\gamma,\delta\in \overline{A}$ there is a 1-in-3 satisfying assignment $\nu_{\gamma,\delta}\in \hom(I_4,\mathbbold{2})$ such that $\{\gamma(\nu_{\gamma,\delta}),\delta(\nu_{\gamma,\delta})\}=\{1,a\}$; so $\iota$ is injective on the set~$A$.  
Consider then any $\gamma_1,\delta_1,\gamma_2,\delta_2\in \overline{A}$ such that $\gamma_1\beta\gamma_2=\delta_1\beta\delta_2\notin U$.  
For any $\nu\in \hom(I_4,\mathbbold{2})$ we have $\gamma_1\beta\gamma_2(\nu)\in \{b,ba\}$ if $\gamma_1(\nu)=1$ and $\gamma_1\beta\gamma_2(\nu)\in \{a,ab\}$ if $\gamma_1(\nu)=a$.  
Thus $\gamma_1\beta\gamma_2=\delta_1\beta\delta_2\notin U$ implies $\gamma_1=\delta_1$.  
By symmetry, $\gamma_2=\delta_2$ also, thus $\iota$ is  injective on elements of $A\bb A$ also.  Moreover, as all elements of $A$ have coordinates within $\{1,a\}$, and every element of $A\bb A$ has at least one coordinate in $\{b,ab,ba\}$, it follows that $\iota$ is injective, and therefore is an isomorphism.
\end{proof}

The converse to Lemma \ref{lem:isinB21} is true and implies that deciding membership in the pseudovariety of ${\bf B}_2^1$ is \texttt{NP}-hard.  We {will} show something stronger than the converse {however}.

The proof of the next result makes use of Green's relations $\mathscr{H}$, $\mathscr{L}$, $\mathscr{R}$, $\mathscr{D}$, and $\mathscr{J}$.  We briefly recall the definitions, but direct the reader to any text on semigroup theory  for further details; Howie \cite{how} is one such.
Let ${\bf S}$ be a semigroup, with underlying universe $S$, and let ${\bf S}^1$ denote the result of adjoining an identity element to ${\bf S}$ if it does not already have one (otherwise ${\bf S}^1={\bf S}$).  
Define the preorder $\leq_\mathscr{L}$ on $S$ by $a\leq_\mathscr{L} b$ if there exists $x\in S^1$ with $xb=a$.  
The preorder $\leq_\mathscr{R}$ is defined dually, while $\leq_\mathscr{H}$ is the intersection $\leq_\mathscr{L}\cap\leq_\mathscr{R}$.  
The preorder $\mathscr{J}$ is defined by $a\leq_\mathscr{J}b$ if there are $x,y\in S^1$ with $xby=a$.  The relations $\mathscr{L}$, $\mathscr{R}$, $\mathscr{H}$, $\mathscr{J}$ are the equivalence relations of elements equivalent under each respective preorder.  
Thus, for example $a\mathrel{\mathscr{J}}b$ if $a\leq_\mathscr{J}b$ and $b\leq_\mathscr{J}a$. 
A further useful fact that we use is that if $a\leq_\mathscr{H} b$ and $b\leq_\mathscr{J} a$ in a finite semigroup, then $a\mathrel{\mathscr{H}}b$; see \cite{how} for example.
Finally, we observe that $\mathscr{D}=\mathscr{L}\circ\mathscr{R}=\mathscr{R}\circ\mathscr{L}$ (the second equality is a theorem, not an obvious fact), and in a finite semigroup $\mathscr{D}=\mathscr{J}$.
Green's relations are one of the fundamental tools for the structural theory of semigroups, and often appear in characterisations of pseudovarieties.
The reader is directed to Almeida's text~\cite{alm} for the following pseudovariety facts.  
The pseudovariety $\mathbf{DS}$ consists of all finite semigroups whose regular $\mathscr{D}$-classes are subsemigroups, while   $\mathsf{L}\mathbf{DS}$ consists of those finite semigroups~${\bf S}$ that are \emph{locally} in $\mathbf{DS}$: for every idempotent $e$, the subsemigroup of ${\bf S}$ on elements $eSe$ is in $\mathbf{DS}$.  
The pseudovariety $\mathsf{L}\mathbf{DS}$ is known to capture precisely the class of all finite semigroups whose variety does not contain~${\bf B}_2^1$.
The interval $[\![\mathsf{V}_{\rm fin}({\bf B}_2^1),\mathsf{L}\mathbf{DS}\vee \mathsf{V}_{\rm fin}({\bf B}_2^1)]\!]$ consists of all pseudovarieties $\mathcal{V}$ with $\mathsf{V}_{\rm fin}({\bf B}_2^1)\subseteq \mathcal{V}\subseteq \mathsf{L}\mathbf{DS}\vee \mathsf{V}_{\rm fin}({\bf B}_2^1)$, or equivalently all pseudovarieties $\mathcal{V}\subseteq \mathsf{L}\mathbf{DS}\vee \mathsf{V}_{\rm fin}({\bf B}_2^1)$ containing~${\bf B}_2^1$.

\begin{lem}\label{lem:is3col}
Let $\mathcal{V}$ be any pseudovariety in the interval $[\![\mathsf{V}_{\rm fin}({\bf B}_2^1),\mathsf{L}\mathbf{DS}\vee \mathsf{V}_{\rm fin}({\bf B}_2^1)]\!]$.  If ${\bf S}_{I_4}\in\mathcal{V}$ then $I_4$ is 1-in-3 satisfiable.
\end{lem}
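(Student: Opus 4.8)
The plan is to reduce immediately to the single extremal case $\mathscr{V}=\mathbb{L}\mathbf{DS}\vee\mathbb{V}({\bf B}_2^1)$: since every pseudovariety $\mathscr{V}$ in the interval satisfies $\mathscr{V}\subseteq\mathbb{L}\mathbf{DS}\vee\mathbb{V}({\bf B}_2^1)$, the hypothesis $S_I\in\mathscr{V}$ gives $S_I\in\mathbb{L}\mathbf{DS}\vee\mathbb{V}({\bf B}_2^1)$, so it is enough to prove that this one membership already forces $I$ to be $1$-in-$3$ satisfiable. Membership in a join of pseudovarieties is witnessed by a division, and since every member of $\mathbb{V}({\bf B}_2^1)$ divides a power of ${\bf B}_2^1$, I would fix a finite $D\in\mathbb{L}\mathbf{DS}$, an integer $n$, a subsemigroup $T\le D\times({\bf B}_2^1)^n$ and a surjection $\theta\colon T\onto S_I$. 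Equivalently this is an injective relational morphism $\tau\colon S_I\to D\times({\bf B}_2^1)^n$, and I would work with the induced relational morphisms $\tau_i\colon S_I\to{\bf B}_2^1$ obtained by composing with the $i$-th coordinate projection, each satisfying $\tau_i(s)\tau_i(t)\subseteq\tau_i(st)$.

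It is worth stressing at the outset that there is no honest homomorphism from $S_I$ into ${\bf B}_2^1$ reflecting enough structure to decode a colouring directly: already in the forward construction of Lemma~\ref{lem:isinB21}, $S_I$ is only a Rees quotient of a subdirect power of ${\bf B}_2^1$, and the naive coordinate maps fail to respect the relations $\ba_u\ba_v=0$. The decoding must therefore be genuinely relational, and the whole point of the hypothesis $D\in\mathbb{L}\mathbf{DS}$ is to control the $D$-factor. The levers are the following features of $S_I$: the elements $\ba,\bb$ generate a copy of the Brandt semigroup ${\bf B}_2$ with $\ba\bb\ba=\ba$, $\bb\ba\bb=\bb$, $\ba^2=\bb^2=0$ and idempotents $\bc=\ba\bb$, $\bd=\bb\ba$ whose product $\bc\bd=0$ leaves the regular $\mathscr{J}$-class; and, crucially, relation~(4) glues \emph{all} clause products to this \emph{same} regular element, $\ba_x\ba_y\ba_z=\ba$ for every clause $(x\vee y\vee z)$. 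Because ${\bf B}_2$ divides ${\bf B}_2^1$ while $\mathbb{L}\mathbf{DS}$ is the largest pseudovariety omitting ${\bf B}_2^1$, the ${\bf B}_2$-configuration cannot be supported by $D$; I would make this precise by lifting the ${\bf B}_2$-relations into $T$, projecting to $D$, and invoking the defining property of $\mathbf{DS}$ together with the stability observation recorded before the lemma ($s\le_{\mathscr{H}}t$ and $t\le_{\mathscr{J}}s$ imply $s\mathrel{\mathscr{H}}t$) to conclude that the $D$-coordinates collapse the would-be ${\bf B}_2$. Hence the separation of $\ba$ from $0$ (and of $\bc,\bd$ from each other and from $0$) witnessed by the injectivity of $\tau$ must be carried by the $({\bf B}_2^1)^n$-factor.

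With the ${\bf B}_2$-structure located in the ${\bf B}_2^1$-factor, I would extract the assignment as follows. Using that every clause product equals the one regular element $\ba$, and that $\ba$ must be realised as a genuinely nonzero (regular) element in the ${\bf B}_2^1$-factor, the aim is to select, for each variable $v$, a truth value $\nu(v)$ from the relational image $\tau_i(\ba_v)$ at a suitable coordinate $i$, so that the defining product $\ba_x\ba_y\ba_z=\ba$ translates into ``exactly one of $x,y,z$ is true''. The arithmetic of ${\bf B}_2^1$ does the combinatorial work: a product of three elements, each confined to $\{1,a\}$, equals the nonzero regular element $a$ precisely when exactly one factor is $a$, which is exactly the $1$-in-$3$ condition, while the incompatibility relations $\ba_u\ba_v=0$ impose no obstruction on the resulting assignment. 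The subtlety is that $\ba_v$ occurs in several clauses and its flag must be chosen consistently across all of them; here the gluing of all clause products to the single element $\ba$, the regularity of $\ba$, and the control over the $D$-factor established above are what must force a single coherent selection, yielding a genuine $1$-in-$3$ satisfying assignment $\nu$ for $I$.

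I expect the main obstacle to be precisely this consistent relational decode: disentangling the $\mathbb{L}\mathbf{DS}$-contribution from the ${\bf B}_2^1$-contribution and proving that the $D$-factor is structurally unable to carry the ${\bf B}_2$-configuration, so that a coherent family of identity/$a$ flags for the variables can be read off the ${\bf B}_2^1$-factor at once. The purely arithmetic computations inside ${\bf B}_2^1$ (which products of three elements from $\{1,a\}$ give the nonzero regular element) are routine; it is the division-theoretic and Green's-relations bookkeeping — ensuring the flags are globally consistent rather than merely clause-by-clause, and that no single variable is simultaneously forced to be $a$-like in one clause and identity-like in another — that is delicate, and this is where the stability fact and the defining pseudoidentities of $\mathbb{L}\mathbf{DS}$ are needed.
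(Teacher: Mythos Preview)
Your overall architecture matches the paper's: reduce to $\mathscr{V}=\mathbb{L}\mathbf{DS}\vee\mathbb{V}({\bf B}_2^1)$, write $S_I$ as a quotient of some $T\le D\times({\bf B}_2^1)^n$ with $D\in\mathbb{L}\mathbf{DS}$, argue that the ${\bf B}_2$ sitting inside $S_I$ cannot be carried by the $D$-factor, and read a truth assignment off a single ${\bf B}_2^1$-coordinate. You also correctly flag the crux: global consistency of the variable flags across all clauses. But that is exactly where your outline stops short of a proof, and the paper's argument turns on one concrete device you have not proposed.

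The paper does not work relationally with the full preimage sets $\tau_i(\ba_v)$; it fixes \emph{specific} lifts $\hat{a}_v\in\psi^{-1}(\ba_v)$, $\hat{e}\in\psi^{-1}(1)$ and, crucially, a lift $\hat{b}\in\psi^{-1}(\bb)$ that is $\mathscr{J}$-minimal in $\psi^{-1}(\bb)$. For any clause $(u\vee v\vee w)$ one has $\hat{b}\hat{a}_u\hat{a}_v\hat{a}_w\hat{b}\in\psi^{-1}(\bb)$ and $\hat{b}\hat{a}_u\hat{a}_v\hat{a}_w\hat{b}\le_{\mathscr{H}}\hat{b}$; $\mathscr{J}$-minimality gives $\ge_{\mathscr{J}}$, so stability yields $\hat{b}\hat{a}_u\hat{a}_v\hat{a}_w\hat{b}\mathrel{\mathscr{H}}\hat{b}$ in $T$, hence \emph{equality} on every projection to the $\mathscr{H}$-trivial ${\bf B}_2^1$. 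One then locates a single coordinate $\pi$ with $\hat{b}\pi=b$ (using that proper subsemigroups of ${\bf B}_2^1$ lie in $\mathbb{L}\mathbf{DS}$), and from $[\hat{b}\hat{a}_u\hat{a}_v\hat{a}_w\hat{b}]\pi=b$ for \emph{every} clause one gets both that each $\hat{a}_v\pi\in\{1,a\}$ and that exactly one of the three is $a$. The assignment $v\mapsto 1$ iff $\hat{a}_v\pi=a$ is then automatically globally consistent. Your relational framing, by contrast, leaves you selecting one element from each set $\tau_i(\ba_v)$ without a mechanism forcing those selections to multiply correctly simultaneously, and you have not yet argued why the flags are confined to $\{1,a\}$. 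The $\mathscr{J}$-minimal choice of $\hat{b}$ is the missing idea that dissolves both of the difficulties you identified.
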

\begin{proof}
The proof works for any class $\mathcal{V}$ with $\mathsf{V}_{\rm fin}({\bf B}_2^1)\subseteq \mathcal{V}\subseteq \mathsf{L}\mathbf{DS}\vee \mathsf{V}_{\rm fin}({\bf B}_2^1)$ (not necessarily a pseudovariety).  
As ${\bf S}_{I_4}\in \mathcal{V}$, we also have ${\bf S}_{I_4}\in \mathsf{L}\mathbf{DS}\vee \mathsf{V}_{\rm fin}({\bf B}_2^1)$, which is equal to the closure of $\mathsf{L}\mathbf{DS}\cup \{{\bf B}_2^1\}$ under taking homomorphic images of subsemigroups of finitary direct products.  
So there is $\ell\geq 0$ and finite semigroups ${\bf S}_1,\dots,{\bf S}_k$ from $\mathsf{L}\mathbf{DS}$ such that there is ${\bf T}\leq {\bf S}_1\times\dots\times {\bf S}_k\times({\bf B}_2^1)^\ell$ and a surjective homomorphism $\psi:{\bf T}\onto {\bf S}_{I_4}$.  
As $\mathsf{L}\mathbf{DS}$ is closed under finitary direct products, there is no loss of generality in assuming that $k\leq 1$.  
Also, clearly $\ell>0$ because ${\bf S}_{I_4}\notin \mathsf{L}\mathbf{DS}$ due to the fact ${\bf S}_{I_4}$ embeds ${\bf B}_2^1$. 
For each variable $v$ appearing in $I$, select any element $\hat{a}_{v}\in\psi^{-1}({\bf a}_{v})$.  
Select an arbitrary element $\he$ from amongst the idempotents of the finite semigroup $\psi^{-1}(1)$ (unless one wishes to perform this proof in the monoid signature, and then the canonical choice of $\he$ is the constant tuple equal to $1$ on each projection). 
As ${\bf S}_{I_4}$ is generated by the elements ${\bf a}_{v}$, ${\bf b}$ and~$1$, we can assume that ${\bf T}$ is generated by the elements $\hat{a}_{v}$ along with $\he$ and any chosen element from $\psi^{-1}(\bb)$.  
We make a more careful selection of an element $
\hb$ from $\psi^{-1}(\bb)$.  

We select $\hat{b}$ arbitrarily from amongst the minimal elements of $\psi^{-1}(\bb)$ with respect to the $\mathscr{J}$ order.  Recall that for any clause $(u\vee v\vee w)$ in $I$ we have $\bb\ba_u\ba_v\ba_w\bb=\bb$, so that $\hb\ha_u\ha_v\ha_w\hb\in\psi^{-1}(\bb)$.  
Now $\hb\ha_u\ha_v\ha_w\hb\leq_\mathscr{H}\hb$, so we also have $\hb\ha_u\ha_v\ha_w\hb\leq_\mathscr{J}\hb$.  
Then the $\mathscr{J}$-minimality of the choice of $\hb$ in $\psi^{-1}(\bb)$ and the fact that $\hb\ha_u\ha_v\ha_w\hb\in\psi^{-1}(\bb)$, implies that we have $\hb\ha_u\ha_v\ha_w\hb\mathrel{\mathscr{J}}\hb$.  
But $\hb\ha_u\ha_v\ha_w\hb\leq_\mathscr{H}\hb$ then implies that $\hb\ha_u\ha_v\ha_w\hb\mathrel{\mathscr{H}}\hb$.  As this is true in ${\bf T}$, it is also true of any projection $\pi$ from ${\bf T}$ into ${\bf B}_2^1$.  As ${\bf B}_2^1$ is $\mathscr{H}$-trivial, it follows that $\hb\pi=\hb\ha_u\ha_v\ha_w\hb\pi$ always.  

Let $u\vee v\vee w$ be an arbitrary clause in $I_4$ and let $\ha$ denote the value $\ha_u\ha_v\ha_w$ (the value of $\ha$ may depend on the clause selected).  We define ${\bf T}'\leq {\bf T}$ to be the subsemigroup generated by $\ha,\hb,\he$, and note that $\psi$ maps ${\bf T}'$ homomorphically onto the subsemigroup of ${\bf S}_{I_4}$ generated by  $1,\bb,\ba$.  This subsemigroup of ${\bf S}_{I_4}$ is of course isomorphic to ${\bf B}_2^1$, showing that ${\bf B}_2^1 \in \mathsf{V}_{\rm fin}({\bf T}')$.  Now ${\bf T}'\leq {\bf S}_1\times ({\bf B}_2^1)^\ell$ so there are $\ell$ projection maps $\pi_1,\dots,\pi_\ell:T'\to {\bf B}_2^1$.
We wish to now show that one of these projections maps  ${\bf T}'$ surjectively onto ${\bf B}_2^1$.  Assume, for contradiction, that none of the projections $\pi_1,\dots,\pi_\ell$ map ${\bf T}'$ onto ${\bf B}_2^1$.  Then we may replace each of the $\ell$ copies of ${\bf B}_2^1$ in  ${\bf T}'\leq {\bf S}_1\times ({\bf B}_2^1)^\ell$ by a proper subsemigroup of ${\bf B}_2^1$.  But all proper subsemigroups of ${\bf B}_2^1$ lie in $\mathsf{L}{\bf DS}$ and this would show that ${\bf T}'\in \mathsf{L}{\bf DS}$, contradicting ${\bf B}_2^1\in\mathsf{V}_{\rm fin}({\bf T}')$.  Thus there is $\pi_i$ mapping ${\bf T}'$ surjectively onto ${\bf B}_2^1$.

Now ${\bf T}'$ is generated by $\he,\ha,\hb$ and so ${\bf B}_2^1$  is generated by $\he\pi_i$, $\ha\pi_i$, $\hb\pi_i$.  However there is only one $3$-element generating set for ${\bf B}_2^1$, and that is $\{1,a,b\}$.  As $\he$ is idempotent, while $a,b$ square to $0$, it follows that $\he\pi_i=1$, while $\{\ha\pi_i,\hb\pi_i\}=\{a,b\}$.  There is an automorphism of ${\bf B}_2^1$ that switches $a$ and $b$, so there is no loss of generality in assuming that  $\hb\pi_i=b$ and $\ha\pi_i=a$.

Now, for every variable $u$, there are variables $v,w$ such that $(u\vee v\vee w)$ is a clause in $I_4$, and then the property
 \[
[\hb\ha_u\ha_v\ha_w\hb]\pi_i = [\hb\ha_v\ha_w\ha_u\hb]\pi_i = [\hb\ha_w\ha_u\ha_v\hb]\pi_i=\hb\pi_i = b
 \]
 on projection $\pi_i$ gives $\ha_u\pi_i\in\{a,1\}$.  Define a truth assignment by $u\mapsto 1$ if $\ha_u\pi_i=a$ and $u\mapsto 0$ if $\ha_u\pi_i=1$.  Then for each clause $(u\vee v\vee w)$ in $I_4$, the property $[\hb\ha_u\ha_v\ha_w\hb]\pi_i=b$ ensures that exactly one of $\ha_u\pi,\ha_v\pi,\ha_w\pi$ is $a$, or equivalently, exactly one of $u,v,w$ is $1$.  Thus the clauses are satisfied in the 1-in-3 sense, as required.
 \end{proof}
 
 \begin{thm}\label{thm:LDS}
 Let $\mathcal{V}$ be any pseudovariety in the interval $[\![\mathsf{V}_{\rm fin}({\bf B}_2^1),\mathsf{L}\mathbf{DS}\vee \mathsf{V}_{\rm fin}({\bf B}_2^1)]\!]$.  Then membership in $\mathcal{V}$ is \texttt{NP}-hard with respect to logspace reductions.
 \end{thm}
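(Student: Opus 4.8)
The plan is to produce a single logspace many-one reduction that witnesses the hardness uniformly for every $\mathcal{V}$ in the interval, by composing the reductions already assembled in Part~\ref{part:sat} with the construction $I\mapsto S_I$ and then invoking the two bracketing lemmas at the top and bottom of the interval. First I would recall from Theorem~\ref{thm:SP2} that the chain of reductions built so far sends an instance of the \texttt{NP}-complete problem monotone NAE3SAT to an instance $I$ of monotone 1-in-3 3SAT enjoying the crucial ``extreme separation'' property: a YES instance produces a $2$-robustly 1-in-3 satisfiable $I$, while a NO instance produces an $I$ that is not even a member of $\CSP(\mathbbold{2})$, i.e.\ not 1-in-3 satisfiable at all. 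Every such $I$ satisfies the structural properties (I)--(IV) of Section~\ref{sec:monotone}, so the map $I\mapsto S_I$ is computable in logspace; composing gives a logspace map from monotone NAE3SAT to finite semigroups.

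Next I would show that $S_I\in\mathcal{V}$ if and only if the original NAE3SAT instance was a YES instance, which is precisely what a many-one reduction requires. For one direction, a YES instance yields a $2$-robustly satisfiable $I$, so Lemma~\ref{lem:isinB21} gives $S_I\in\mathbb{V}({\bf B}_2^1)$; since $\mathbb{V}({\bf B}_2^1)$ is the bottom of the interval, $S_I\in\mathcal{V}$. For the other direction I would use the contrapositive of Lemma~\ref{lem:is3col}: because $\mathcal{V}\subseteq\mathbb{L}\mathbf{DS}\vee\mathbb{V}({\bf B}_2^1)$, membership $S_I\in\mathcal{V}$ forces $I$ to be 1-in-3 satisfiable, and the extreme separation then forces the original instance to have been a YES instance. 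Combining the two directions gives $S_I\in\mathcal{V}\iff$ YES, so membership in $\mathcal{V}$ is \texttt{NP}-hard under logspace many-one reductions.

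The point I would underline is that the two lemmas pin down opposite ends of the interval simultaneously: robust satisfiability lands $S_I$ at the very base $\mathbb{V}({\bf B}_2^1)$, whereas mere membership anywhere up to the ceiling $\mathbb{L}\mathbf{DS}\vee\mathbb{V}({\bf B}_2^1)$ already forces satisfiability. The absence (guaranteed by Theorem~\ref{thm:SP2}) of instances lying strictly between ``not satisfiable'' and ``robustly satisfiable'' is exactly what allows one reduction to serve all $\mathcal{V}$ at once, since any such $\mathcal{V}$ separates the two extreme outcomes. The genuinely difficult work has already been discharged in Lemmas~\ref{lem:isinB21} and~\ref{lem:is3col} and in Theorem~\ref{thm:SP2}; the present theorem is the assembly step, and the only thing requiring care is confirming that the composite map is logspace and many-one, which follows because each constituent map is logspace and the target is the single membership query $S_I\in\mathcal{V}$.
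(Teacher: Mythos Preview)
Your proposal is correct and follows essentially the same approach as the paper's own proof: invoke Lemma~\ref{lem:isinB21} to place $S_I$ at the bottom of the interval when $I$ is $2$-robustly satisfiable, invoke Lemma~\ref{lem:is3col} to force satisfiability of $I$ from membership anywhere below the top, and appeal to Theorem~\ref{thm:SP2} for the extreme separation that makes one reduction serve all $\mathcal{V}$ simultaneously. Your write-up is somewhat more explicit than the paper's (spelling out the logspace composition and the role of properties (I)--(IV)), but the argument is the same.
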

 \begin{proof}
 We again continue with our instance $I_4$ and the constructed semigroup~${\bf S}_{I_4}$.
 If $I_4$ is a ${\leq} 2$-robustly 1-in-3 satisfiable instance of positive 1-in-3SAT, then ${\bf S}_{I_4}\in \mathsf{V}_{\rm fin}({\bf B}_2^1)\subseteq \mathcal{V}$ by Lemma \ref{lem:isinB21}.  Conversely, if ${\bf S}_{I_4}\in \mathcal{V}\subseteq \mathsf{L}\mathbf{DS}\vee \mathsf{V}_{\rm fin}({\bf B}_2^1)$ then Lemma~\ref{lem:is3col} shows that  $I_4$ is 1-in-3 satisfiable.  The result now follows from Theorem~\ref{thm:SP2}.
 \end{proof}

We now recall some basic facts about completely $0$-simple semigroups, and the reader is directed to a text such as Howie~\cite{how} for a more complete treatment.
Recall that a \emph{$0$-simple semigroup} is a semigroup ${\bf S}$ with~$0$ whose ideal consist only of $\{0\}$ and ${\bf S}$ itself. 
A $0$-simple semigroup is \emph{completely $0$-simple} if it satisfies the additional properties on nonzero idempotents: if $e^2=e$, $f^2=f$ and $ef=e=fe$, then $e=f$.
A \emph{completely simple semigroup} is defined similarly, but without~$0$: a single ideal, with the same condition on idempotents.
Every finite $0$-simple semigroup is completely $0$-simple, and moreover the seminal work of Rees~\cite{ree} provides an elegant representation of completely $0$-simple semigroups in terms of what is now called the Rees matrix semigroup (with $0$) construction, which we now recall.
Let ${\bf G}$ be a group with underlying set $G$, and $P$ an $I\times J$ matrix (the ``sandwich matrix'') whose entries come from $G\cup\{0\}$, where~$0$ is a symbol not in $G$.  
Then the \emph{Rees matrix semigroup} $M^0[{\bf G},P]$ consists of $0$ along with all triples $(i,g,j)$ where $i\in I$, $j\in J$ and $g\in G$.  The element $0$ is a multiplicative zero, and other products are defined by 
\[
(i,g,j)\cdot (k,h,\ell)=\begin{cases}
(i,gP_{jk}h,\ell)&\text{ if $P_{jk}\in G$}\\
0&\text{ otherwise.}
\end{cases}
\]
Provided that every row and every column of $P$ contains a nonzero element, the semigroup $M^0[{\bf G},P]$ is completely $0$-simple.  
Remarkably, every completely $0$-simple semigroup is isomorphic to a Rees matrix semigroup of this form.  
The semigroup~${\bf B}_2$ for example, is isomorphic to $M^0[{\bf 1},I_2]$, where ${\bf 1}$ denotes the one-element group, and $I_2$ is the $2\times 2$ identity matrix, with the single element of ${\bf 1}$ on the diagonal.
In the following results we always consider completely $0$-simple semigroups as Rees matrix semigroups.

The following observation is not found in \cite{how}, so we give more details.  
Consider two Rees matrix semigroups $M^0[{\bf G},P]$ and  $M^0[{\bf H},Q]$ (where $P$ has dimensions $I_P\times J_P$ and $Q$ dimensions $I_Q\times J_Q$).  
If we factor the direct product $M^0[{\bf G},P]\times M^0[{\bf H},Q]$ by the maximal ideal consisting of all tuples that include at least one $0$-coordinate, then a completely $0$-simple semigroup results that is isomorphic to $M^0[{\bf G}\times {\bf H},P\times Q]$, where $P\times Q$ denotes a form of the matrix Kronecker product: its dimensions are $(I_P\times I_Q)\times (J_P\times J_Q)$ and the $((i_1,i_2),(j_1,j_2))$ coordinate is $0$ if $P_{i_1,j_1}=0$ or $Q_{i_2,j_3}=0$, and otherwise is the entry $(P_{i_1,j_1},Q_{i_2,j_2})$ of the group direct product ${\bf G}\times {\bf H}$.  
This is easily seen, as a tuple in $M^0[{\bf G},P]\times M^0[{\bf H},Q]$ avoiding a $0$-coordinate is of the form $((i_1,g,j_1),(i_2,h,j_2))$ (for $g\in G$, $h\in H$), and the proposed representation $M^0[{\bf G}\times {\bf G},P\times Q]$ is simply considering this tuple rearranged as $((i_1,j_1),(g,h),(j_1,j_2))$.

Let us say that a completely 0-simple semigroup is \emph{block diagonal} if its sandwich matrix may be arranged in block diagonal form (with each rectangular block consisting of strictly nonzero entries).  The following schematic depicts a matrix with $3$ blocks; with large zeroes denoting regions containing only $0$, and grey areas consisting of nonzero entries.
\[
\left[\begin{array}{c|c|c}
\textcolor{black!30}{\rule{0.6cm}{.6cm}}\rule{0cm}{0.7cm}&\rule{0cm}{.6cm}\mbox{\huge{0}}&\rule{0cm}{.6cm}\mbox{\huge{0}}\\\hline
\rule{0cm}{.8cm}\mbox{\huge{0}}&\textcolor{black!30}{\rule{0.8cm}{.8cm}}\rule{0cm}{0.9cm}&\rule{0cm}{.8cm}\mbox{\huge{0}}\\\hline
\rule{0cm}{.5cm}\mbox{\huge{0}}&\rule{0cm}{.5cm}\mbox{\huge{0}}&\textcolor{black!30}{\rule{0.5cm}{.5cm}}\rule{0cm}{0.6cm}
\end{array}\right]
\]
It is not hard to verify that block diagonal completely 0-simple semigroups are precisely those satisfying the law 
\[
xv\neq 0\And uv\neq 0\And uy\neq 0\Rightarrow xy\neq 0.
\]  
{When the number of blocks is 1, the block diagonal property coincides with being a completely simple semigroup.  When there are at least two blocks, the semigroup~${\bf B}_2$ is a subsemigroup: simply generate from any two idempotents selected from separate blocks.}
  
The following result and its proof holds in either the monoid or the semigroup signature.
\begin{cor}\label{cor:orthodoxlike}
Let ${\bf M}=M^0[{\bf G},P]$ be a finite block diagonal completely 0-simple semigroup, where $P$ has at least two blocks \up(equivalently, so that ${\bf M}$ has nontrivial zero divisors\up).  Then the membership problem for finite semigroups or monoids in $\mathsf{V}_{\rm fin}({\bf M}^1)$ is \texttt{NP}-hard.
\end{cor}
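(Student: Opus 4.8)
The plan is to apply Theorem \ref{thm:LDS} to the pseudovariety $\mathbb{V}({\bf M}^1)$, by showing that it lies in the interval $[\![\mathbb{V}({\bf B}_2^1),\mathbb{L}\mathbf{DS}\vee\mathbb{V}({\bf B}_2^1)]\!]$. Write ${\bf M}=\mathcal{M}^0(G;I,\Lambda;P)$ as a Rees matrix semigroup, and record the block structure as a partition $I=I_1\sqcup\dots\sqcup I_m$ and $\Lambda=\Lambda_1\sqcup\dots\sqcup\Lambda_m$ with $p_{\lambda i}\neq 0$ precisely when $\lambda$ and $i$ lie in a common block. A nonzero element $(i,g,\lambda)$ squares to $0$ exactly when $p_{\lambda i}=0$, so ${\bf M}$ fails to be completely regular (equivalently, has a nonzero nilpotent, equivalently has nontrivial zero divisors) if and only if some entry of $P$ is zero, which for block diagonal $P$ means $m\ge 2$. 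Thus the hypothesis supplies at least two blocks, and this is what drives both inclusions.

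For the lower inclusion it suffices to exhibit ${\bf B}_2^1$ as a subsemigroup of ${\bf M}^1$, whence ${\bf B}_2^1\in\mathbb{V}({\bf M}^1)$. Choose distinct blocks $s\neq t$ and indices $i_1\in I_s$, $i_2\in I_t$, $\lambda_1\in\Lambda_t$, $\lambda_2\in\Lambda_s$ (nonempty, since every row and column of $P$ has a nonzero entry). Then $p_{\lambda_1 i_1}=p_{\lambda_2 i_2}=0$ while $p_{\lambda_1 i_2}$ and $p_{\lambda_2 i_1}$ are nonzero, so putting $g=p_{\lambda_2 i_1}^{-1}p_{\lambda_1 i_2}^{-1}$ and $h=p_{\lambda_1 i_2}^{-1}p_{\lambda_2 i_1}^{-1}$, the elements $\ba=(i_1,g,\lambda_1)$ and $\bb=(i_2,h,\lambda_2)$ satisfy $\ba^2=\bb^2=0$, $\ba\bb\ba=\ba$ and $\bb\ba\bb=\bb$; hence $\{\ba,\bb,\ba\bb,\bb\ba,0\}$ is a copy of ${\bf B}_2$, and adjoining the identity of ${\bf M}^1$ produces ${\bf B}_2^1\le{\bf M}^1$.

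The substance is the upper inclusion ${\bf M}^1\in\mathbb{L}\mathbf{DS}\vee\mathbb{V}({\bf B}_2^1)$, which I would obtain by exhibiting ${\bf M}$ as a homomorphic image of a subsemigroup of $\hat{\bf M}\times\mathsf{B}_m$, where $\mathsf{B}_m$ is the combinatorial (trivial-group) Brandt semigroup on $m$ indices and $\hat{\bf M}=\mathcal{M}^0(G;I,\Lambda;\hat P)$ is obtained from ${\bf M}$ by filling in every cross-block entry of $P$ with $1_G$. Since $\hat P$ has no zero entries, the nonzero elements of $\hat{\bf M}$ form a completely simple subsemigroup, so every regular $\mathscr{D}$-class of $\hat{\bf M}^1$ is a subsemigroup and $\hat{\bf M}^1\in\mathbf{DS}\subseteq\mathbb{L}\mathbf{DS}$. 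There is a block-recording homomorphism $\rho\colon{\bf M}\to\mathsf{B}_m$ sending $(i,g,\lambda)$ with $i\in I_s,\lambda\in\Lambda_u$ to the matrix unit $(s,u)$ (and $0\mapsto 0$), whose defining feature is that $\rho(x)\neq 0$ iff $x\neq 0$. The map $\iota\colon(i,g,\lambda)\mapsto(i,g,\lambda)$ into $\hat{\bf M}$ is \emph{not} a homomorphism (cross-block products that vanish in ${\bf M}$ survive in $\hat{\bf M}$), but on the subsemigroup $T=\langle(\iota(x),\rho(x)):x\in{\bf M}\rangle\le\hat{\bf M}\times\mathsf{B}_m$ the second coordinate detects exactly the cross-block collapses: an element of $T$ has nonzero $\mathsf{B}_m$-coordinate iff the corresponding product is nonzero in ${\bf M}$, and in that case its $\hat{\bf M}$-coordinate is the true value of the product. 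Hence the rule ``send $(a,b)\in T$ to $a$ (read in ${\bf M}$) if $b\neq 0$, and to $0$ otherwise'' is a well-defined surjective homomorphism $T\onto{\bf M}$, and adjoining $(1,1)$ extends it to $T^1\onto{\bf M}^1$. This gives ${\bf M}^1\in\mathbb{L}\mathbf{DS}\vee\mathbb{V}(\mathsf{B}_m^1)$, so it remains to place $\mathsf{B}_m^1$ in $\mathbb{V}({\bf B}_2^1)$.

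The main obstacle is precisely this last step, because $\mathsf{B}_m$ is congruence-free for $m\ge 2$; thus every homomorphism $\mathsf{B}_m\to{\bf B}_2$ is trivial, $\mathsf{B}_m$ is not a subpower of ${\bf B}_2$, and the membership must genuinely exploit a quotient of a subpower. I would prove $\mathsf{B}_m\in\mathbb{V}({\bf B}_2)$ as follows. Index a power of ${\bf B}_2$ by the set $\mathcal{F}$ of all $2$-colourings $\chi\colon\{1,\dots,m\}\to\{1,2\}$, and map each matrix unit $e_{ij}\in\mathsf{B}_m$ to the tuple whose $\chi$-entry is the matrix unit of ${\bf B}_2$ with row $\chi(i)$ and column $\chi(j)$. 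Let $U$ be the subsemigroup of ${\bf B}_2^{\mathcal{F}}$ generated by these tuples. A telescoping computation shows that a product of such generators has all coordinates nonzero iff it telescopes in $\mathsf{B}_m$ to a single matrix unit $e_{pq}$ (since ``$\chi(j)=\chi(k)$ for every $\chi$'' forces $j=k$), whereas any failed match leaves some coordinate zero. Consequently the map sending an all-nonzero element of $U$ to the matrix unit it represents, and every other element to $0$, is a well-defined surjective homomorphism $U\onto\mathsf{B}_m$, giving $\mathsf{B}_m\in\mathbb{V}({\bf B}_2)\subseteq\mathbb{V}({\bf B}_2^1)$; replacing ${\bf B}_2$ by ${\bf B}_2^1$ and adjoining identities yields $\mathsf{B}_m^1\in\mathbb{V}({\bf B}_2^1)$. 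Combining the two inclusions places $\mathbb{V}({\bf M}^1)$ in the interval of Theorem \ref{thm:LDS}, which delivers the asserted \texttt{NP}-hardness; the monoid and semigroup signatures are handled uniformly, exactly as in the cited remark preceding the statement.
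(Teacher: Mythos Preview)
Your proof is correct and follows essentially the same strategy as the paper: place $\mathbb{V}({\bf M}^1)$ in the interval of Theorem~\ref{thm:LDS} by decomposing ${\bf M}$ as a divisor of a Brandt semigroup crossed with a completely $0$-simple semigroup without zero divisors. The only differences are in presentation: the paper invokes the exclusion characterisation of $\mathbb{L}\mathbf{DS}$ for the lower inclusion and cites the known equality $\mathbb{V}({\bf B}_n^1)=\mathbb{V}({\bf B}_2^1)$ for the upper, whereas you supply an explicit embedding of ${\bf B}_2^1$ into ${\bf M}^1$ and an explicit construction of $\mathsf{B}_m$ as a quotient of a subpower of ${\bf B}_2$.
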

\begin{proof}
First observe that ${\bf B}_2^1$ is a submonoid ${\bf M}^1$, so that ${\bf M}^1$ is not in the pseudovariety $\mathsf{L}\mathbf{DS}$.  
Second, we note that ${\bf M}^1$ lies within the join of $\mathsf{V}_{\rm fin}({\bf B}_2^1)$ with the pseudovariety of all finite completely regular semigroups, which is a subclass of $\mathsf{L}\mathbf{DS}$.  
To see this, let $P$ denote the sandwich matrix of ${\bf M}$ in block diagonal form.  
Let $n>1$ denote the number of blocks in $P$ and let $P'$ denote any matrix obtained by replacing $0$ entries in $P$ by group entries (in an arbitrary way).  
The Rees matrix semigroup with $0$ for the sandwich matrix $P'$ has no $0$ divisors, and we denote it by~${\bf R}$.  
If the direct product ${\bf B}_n\times {\bf R}$ is factored by its unique maximal nontrivial ideal, a block diagonal semigroup~${\bf D}$ is obtained whose blocks consist of copies of~$P'$, as this is (isomorphic to) the Kronecker product $P'\times J_n$, where\footnote{We have used $J_n$ for the identity matrix rather than the usual $I_n$, to avoid ambiguity with our instances $I_1,\dots,I_4$.} $J_n$ is the $n\times n$ diagonal sandwich matrix of ${\bf B}_n$. 
So ${\bf M}$ is a subsemigroup of~${\bf D}$: to recover a copy of the $k^{\rm th}$ block of $P$ within $P'\times J_n$, simply restrict to the coordinates of the form $((i,k),(j,k))$, where $(i,j)$ is a coordinate of the $k^{\rm th}$ block in $P$. 
Thus~${\bf M}$ arises as a divisor of ${\bf B}_n\times {\bf R}$, and moreover  ${\bf M}^1$ arises  as a divisor of ${\bf B}_n^1\times {\bf R}^1$.  
As ${\bf R}^1$ is without zero divisors it lies in $\mathsf{L}\mathbf{DS}$ and then as $\mathsf{V}_{\rm fin}({\bf B}_n^1)=\mathsf{V}_{\rm fin}({\bf B}_2^1)$, it follows that ${\bf M}^1$ lies in the join of a subpseudovariety of $\mathsf{L}\mathbf{DS}$ with $\mathbf{V}({\bf B}_2^1)$.  
Now the corollary follows from Theorem~\ref{thm:LDS}.
\end{proof}
All of these results concern \texttt{NP}-hardness, not completeness.  Even the following problem is open.
\begin{problem}\label{prob:B21NP}
Is the membership problem for finite semigroups in the pseudovariety $\mathsf{V}_{\rm fin}({\bf B}_2^1)$ in \texttt{NP}?
\end{problem}

The five element semigroup ${\bf A}_2$ has presentation $\langle a,b\mid aba=a, bab=b, aa=0, bb=b\rangle$, and with an identity element adjoined is denoted ${\bf A}_2^1$.  This is a completely 0-simple monoid lying outside of the class of block diagonal completely 0-simple monoids of Corollary \ref{cor:orthodoxlike}; moreover it lies in the variety of any finite completely 0-simple monoid that is not block diagonal.  In an  article~\cite{jaczha} written subsequently to the present one (but appearing before it), it was shown that $\mathsf{V}_{\rm fin}({\bf B}_2^1)$ is not finitely based within the pseudovariety $\mathsf{V}_{\rm fin}({\bf A}_2^1)$, and that Lemma \ref{lem:is3col} fails to extend to include $\mathsf{V}_{\rm fin}({\bf A}_2^1)$.  So the methods of the present article do not directly shed any light on the complexity of deciding membership in $\mathsf{V}_{\rm fin}({\bf A}_2^1)$.  Thus the following problem is of immediate interest.
\begin{problem}\label{problem:A21}
Can membership in the pseudovariety $\mathsf{V}_{\rm fin}({\bf A}_2^1)$ be decided in polynomial time?
\end{problem}
We wish to make some observations relating to Problem \ref{problem:A21}.  
Recall from McNulty, Szekely and Willard~\cite{MSW} that for a finite algebra ${\bf A}$ of finite signature $\mathscr{S}$, the \emph{equational complexity function} $\beta_{\bf A}:\mathbb{N}\to\mathbb{N}$ is defined by $\beta_{\bf A}(n)=1$ if all $\mathscr{S}$-algebras of size $n$ lie in the variety of ${\bf A}$ and otherwise $\beta_{\bf A}(n)$ is defined to be the maximum, over all $n$-element $\mathscr{S}$-algebras ${\bf B}$ not in the variety of ${\bf A}$, of the smallest equation satisfied by ${\bf A}$ and failing on~${\bf B}$.  
(The precise definition of $\beta_{\bf A}$ depends on the precise definition of ``size'' of an equation, but any sensible will suffice for the following observation.)  
If ${\bf A}$ has a finite basis for its equations, then $\beta_{\bf A}$ is bounded, and the possible converse to this statement remains an important unsolved problem which is known to be equivalent to a well-known open problem of Eilenberg and Sch\"utzenberger \cite{ES}; see \cite{MSW} for a detailed discussion.  
To date there are very few algebras ${\bf A}$ for which $\beta_{\bf A}$ is known not to bounded by a polynomial.  
We wish to observe that ${\bf A}_2^1$ either has non-polynomial bounded equational complexity, or has co-\texttt{NP}-easy membership problem (with respect to many-one reductions), or possibly both.  
The idea is on page 247 of Jackson and McNulty~\cite{jacmcn}: if the equational complexity of ${\bf A}_2^1$ is bounded by a polynomial $p(n)$, then given a finite semigroup ${\bf B}$ not in the variety of ${\bf A}_2^1$, we may guess an equation $u\approx v$ of length at most $p(|B|)$ that is true on ${\bf A}_2^1$, as well as an assignment of its variables into ${\bf B}$ witnessing its failure on ${\bf B}$.  
This certificate for non-membership in the variety of ${\bf A}_2^1$ may be verified in deterministic polynomial time using the algorithm of Seif and Szab\'o~\cite[Theorem~4.7]{seisza} or Klima~\cite[Lemma 8]{kli} to verify that $u\approx v$ holds on ${\bf A}_2^1$ (as well as the easy check that the witnessing assignment into ${\bf B}$ does yield failure of $u\approx v$ on ${\bf B}$).  
We believe this provides reasonable evidence that the membership in the pseudovariety of~${\bf A}_2^1$ is not \texttt{NP}-hard with respect to polynomial time many-one reductions. 

Following Lee and Zhang \cite{leezha} it is known that there are precisely $4$ semigroups of order $6$ that have no finite basis for their equations.  Aside from ${\bf B}_2^1$, which we have shown to generate a pseudovariety with hard membership problem, and ${\bf A}_2^1$ which we have discussed in Problem \ref{problem:A21}, there is the example ${\bf A}_2^g$ observed by Volkov (private communication; see \cite{VGK}) and the semigroup ${\bf L}$ of Zhang and Luo \cite{zhaluo}.  The semigroup ${\bf A}_2^g$ was shown to have polynomial time membership problem for its pseudovariety by Goldberg, Kublanovsky and Volkov \cite{VGK}, but the complexity of membership in $\mathsf{V}_{\rm fin}({\bf L})$ is unknown.
\begin{problem}
Can membership in $\mathsf{V}_{\rm fin}({\bf L})$  be decided in polynomial time?
\end{problem}

At around the same time as the author proved the above results for ${\bf B}_2^1$, Kl\'{\i}ma, Kunc and Pol\'ak distributed a manuscript that gave a family of semigroups for which membership problem in the variety generated by any one is co-\texttt{NP}-complete (with respect to many-one reductions) \cite{KKP}.  Moreover, like ${\bf B}_2^1$, these semigroups are known important examples in the theory of finite semigroups, generating the pseudovariety corresponding to piecewise $k$-testable languages for various $k$.  It is interesting to observe that these semigroups all lie in the pseudovariety $\mathsf{L}{\bf DS}$, thus the direct product of any one of these with ${\bf B}_2^1$ generates a pseudovariety that lies in the interval $[\![\mathsf{V}_{\rm fin}({\bf B}_2^1),\mathsf{L}\mathbf{DS}\vee \mathsf{V}_{\rm fin}({\bf B}_2^1)]\!]$ so has \texttt{NP}-hard membership problem.  It would be very interesting if such a direct product also has co-\texttt{NP}-hard membership problem (with respect to many-one reductions).

\section{Variety membership for other algebraic signatures}\label{sec:small}
To complete the paper, we explore applications of our methods in signatures beyond the semigroup signature.  
We first explore two ad hoc cases producing examples of smallest possible  size, then explore two expansions of the semigroup signature: inverse semigroups and additively idempotent semirings (also known as semilattice-ordered semigroups).  
Some of the results in this section are partially superseded by results in Jackson, Ren and Zhao~\cite{JRZ}, where the instance $I_4$ of the present paper was used in the context of semirings.
This is discussed in more detail in both Subsection~\ref{subsec:small} and Subsection~\ref{subsec:B21inverse}.
\subsection{Variety membership for some small algebraic structures}\label{subsec:small}
An early result due to Lyndon \cite{lyn51} is that  every two element algebra of finite signature has a finite basis for its equations.  Because membership in the variety generated by such an algebraic structure is first order definable, its complexity lies in the complexity class $\texttt{AC}^0$, a proper subclass of~$\texttt{P}$ (the reader should consult a computational complexity text such as \cite{imm} for precise definitions of complexity classes such as $\texttt{AC}^0$).
Thus the smallest possible size for an algebra to generate a variety with intractable membership problem is $3$.  
At the time of submission, the smallest known algebra with intractable membership problem for its variety, assuming $\texttt{P}\neq \texttt{NP}$, had 6 elements: the original example due to Szekely \cite{sze} (subject to minor modification observed in Jackson and McKenzie \cite[p.~123]{jacmck}); the semigroup ${\bf B}_2^1$ of course also has this property by Theorem \ref{thm:LDS}. 
The \texttt{NP}-completeness results above now give an easy example of a $3$-element algebra with \texttt{NP}-hard variety membership problem, as well as a $4$-element groupoid with \texttt{NP}-hard variety membership.  
(Here by \emph{groupoid} we mean an algebraic structure consisting of a single binary operation, also known as a \emph{binar} or a \emph{magma}.)  
Recall that the \emph{graph algebra} $\ga(\mathbb{G})$ of a graph $\mathbb{G}$ is the algebra formed over the vertices $V_\mathbb{G}\cup\{\infty\}$ (where $\infty$ is some new symbol) by letting~$\infty$ act as a multiplicative zero element, and letting $u\cdot v=v$ if $\{u,v\}\in E_\mathbb{G}$ and $u\cdot v=\infty$ otherwise.  
Observe that for vertices $u_1,\dots,u_n\in V_\mathbb{G}$ (not necessarily distinct), we have $(\dots ((u_1\cdot u_2)\cdot u_3\dots )\cdot u_n\neq \infty$ if and only if $\{u_i,u_{i+1}\}\in E_\mathbb{G}$ for each $i\leq n-1$, in which case $(\dots ((u_1\cdot u_2)\cdot u_3\dots )\cdot u_n)=u_n$.  
Thus for a graph~$\mathbb{G}$, if  $u_1,\dots,u_n$ is a sequence of vertices in a path passing each edge at least once, then if we  consider the vertex symbols $u_1,\dots,u_n$ and $x$ as variables, it will follow immediately that $\mathbb{K}_3$ satisfies the law $(\dots ((u_1\cdot u_2)\cdot u_3\dots )\cdot u_n)\approx xx$ if and only if $\mathbb{G}$ is \emph{not} $3$-colourable.
\begin{cor}\label{cor:small}
\begin{enumerate}
\item It is \texttt{NP}-hard to decide membership of finite groupoids in the variety generated by the graph algebra of $\mathbb{K}_3$ \up(with $4$-elements\up).
\item There is a $3$-element algebra ${\bf M}$ with two ternary operations such that it is \texttt{NP}-complete to decide membership of finite algebras in the variety generated by ${\bf M}$.
\end{enumerate}
\end{cor}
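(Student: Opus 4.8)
The plan is to treat both parts uniformly as \emph{relational algebra} reductions that exploit the extreme separation established in Part~\ref{part:sat}: the instances produced there are either flexibly satisfiable (hence lie in $\mathsf{SP}$ of the template) or not satisfiable at all, and it is precisely this dichotomy that lets a single finite algebra detect the flexible class from one side while a term identity detects the unsatisfiable class from the other. In particular, mere colourability/satisfiability will \emph{not} suffice for the positive direction; it is the $2$-robustness (flexibility) that makes the construction return the right algebra.

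For (1) I would reduce from $2$-robust triangulated G3C (Theorem \ref{thm:G3C}), sending a graph $\mathbb{G}$ to its graph algebra $A(\mathbb{G})$, which is clearly logspace. If $\mathbb{G}$ is a YES instance then it is $2$-robustly $3$-colourable, i.e.\ (by the discussion following Example \ref{eg:K3}) $\mathbb{G}\in\mathsf{SP}(\mathbb{K}_3)$: it is $3$-colourable, distinct vertices are distinguishable ($=$-separation), and every non-edge can be monochromatically coloured (edge-separation). Let $C=\hom(\mathbb{G},\mathbb{K}_3)$ and let $S\leq A(\mathbb{K}_3)^C$ be generated by the evaluation maps $e_v\colon c\mapsto c(v)$. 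The set $N$ of elements of $S$ taking value $\infty$ in some coordinate is an ideal, so the Rees quotient $S/N$ exists, and I would verify that $\iota\colon A(\mathbb{G})\to S/N$, $v\mapsto[e_v]$, $\infty\mapsto[N]$, is an isomorphism: $=$-separation makes the $e_v$ distinct; for an edge $\{u,v\}$ one has $e_ue_v=e_v$, while for a non-edge, edge-separation forces $e_ue_v\in N$, matching the operation of $A(\mathbb{G})$; and any coordinatewise non-$\infty$ element of $S$ equals some $e_w$ (at each coordinate it is the colour of the root variable of its defining term), so $S\setminus N=\{e_v\}$ and $\iota$ is onto. Hence $A(\mathbb{G})\in\mathsf{HS}(A(\mathbb{K}_3)^C)\subseteq\mathbb{V}(A(\mathbb{K}_3))$. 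If instead $\mathbb{G}$ is a NO instance then $\mathbb{G}\not\to\mathbb{K}_3$, and using the standard term-graph calculus for graph algebras (a term $t$ is non-$\infty$ under an assignment iff the assignment is a graph homomorphism of the term graph of $t$, with value the image of the root), I would choose a term $t_\mathbb{G}$ whose term graph is $\mathbb{G}$ (any connected graph is realised by reusing variables along a product); then $t_\mathbb{G}\approx y\cdot y$ holds in $A(\mathbb{K}_3)$ yet fails in $A(\mathbb{G})$ under $v\mapsto v$, so $A(\mathbb{G})\notin\mathbb{V}(A(\mathbb{K}_3))$. This yields \texttt{NP}-hardness.

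For (2) I would run the same scheme with $\mathbbold 2$ in place of $\mathbb{K}_3$. The algebra ${\bf M}$ has universe $\{0,1,\infty\}$ ($\infty$ a zero for both operations); one ternary operation encodes $R$ with coordinate propagation ($f(x,y,z)=z$ when $(x,y,z)\in R$, else $\infty$), and a second ternary operation is used to glue clause-constraints together so that an arbitrary instance $I$ is realised as a term hypergraph and so that equality is recovered by the decoding. To each $I$ I attach the analogous algebra ${\bf M}_I$ on $V_I\cup\{\infty\}$. The positive direction builds a Rees quotient of a subpower of ${\bf M}$ over the $1$-in-$3$ satisfying assignments, the separations being supplied by Observations (I)--(IV) of Section \ref{sec:monotone} ($=$-separation together with the ability to place non-clauses outside $R$); the negative direction produces, for unsatisfiable $I$, a term-hypergraph identity valid in ${\bf M}$ but failing in ${\bf M}_I$. \texttt{NP}-hardness then follows from Theorem \ref{thm:SP2}. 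For \texttt{NP}-membership I would show $\mathbb{V}({\bf M})$ is residually small with a constant bound $n_0$, so an arbitrary finite algebra lies in it iff each of its boundedly-large subdirectly irreducible quotients lies in the finite set $\mathsf{HS}({\bf M}^{n_0})$; this gives a guess-and-verify certificate (guess the separating homomorphisms into the allowed small quotients), placing the problem in \texttt{NP}.

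The main obstacle I expect is in part (2): designing the two operations so that (a) every instance $I$ is faithfully encoded as a term hypergraph whose value propagation mimics the graph case --- the permutation-symmetric ternary $R$ makes the root bookkeeping and the assembly of several clauses into one connected term more delicate than for a binary edge relation --- and (b) the decoding recovers both $R$ and equality, matching $\mathsf{SP}(\mathbbold 2)$ exactly. Establishing the residual bound needed for the \texttt{NP} upper bound is a second, essentially finite but fiddly, verification. By contrast the graph-algebra facts used in part (1) are standard, so there the only genuinely new input is the observation that $2$-robustness is exactly what lets the Rees quotient of the subpower return $A(\mathbb{G})$ on the nose.
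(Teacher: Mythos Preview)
Your treatment of part (1) is correct and is essentially the paper's own argument, with the welcome addition that you make explicit why the Rees quotient of the subpower returns $A(\mathbb{G})$ on the nose (via $=$-separation and edge-separation), whereas the paper simply asserts it after observing $\mathbb{G}\in\mathsf{SP}(\mathbb{K}_3)$.

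For part (2) there is a genuine gap. You never actually define the second ternary operation, and you yourself flag this as the ``main obstacle'': designing the gluing operation so that an arbitrary instance $I$ becomes a connected term whose evaluation in ${\bf M}$ detects $R$ \emph{and} equality. This is not a detail to be filled in later; it is the whole construction, and the naive choice $f(x,y,z)=z$ for $(x,y,z)\in R$ does not by itself let you thread together clauses sharing a single variable into a single term, nor does it give a term identity that recovers equality on the relational side. Your NP upper bound via residual smallness is likewise conditional on an algebra you have not built.

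The paper sidesteps all of this by quoting an off-the-shelf construction: it takes $\operatorname{ps}(\mathbbold{2}^\pi)$ from \cite[Section~7.1]{jac:flat}, a $3$-element algebra with one ternary operation (encoding $R$) and two binary operations $\wedge,\triangleright$ (the ``pointed semilattice'' structure that does exactly the gluing and equality-recovery you are missing). Proposition~7.2 of \cite{jac:flat} then gives a polynomial-time equivalence between variety membership for $\operatorname{ps}(\mathbbold{2}^\pi)$ and universal Horn class membership for $\mathbbold{2}$, which delivers both \texttt{NP}-hardness (via Theorem~\ref{thm:SP2}) and \texttt{NP}-membership in one stroke. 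The reduction to two ternary operations is then a one-line term trick: $(x\wedge y)\triangleright z$ recovers both binaries via $(x\wedge y)\triangleright(x\wedge y)\approx x\wedge y$ and $(x\wedge x)\triangleright y\approx x\triangleright y$. So the paper's part~(2) is really a corollary of prior work plus Theorem~\ref{thm:SP2}; your from-scratch approach is in the right spirit but, as it stands, stops exactly where the difficulty begins.
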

\begin{proof}
(1). {We reduce from the \texttt{NP}-hardness result of Corollary \ref{cor:uHK3}, recalling that the property of lying in the universal Horn class of $\mathbb{K}_3$ is identical to being isomorphic to an induced subgraph of a direct power of $\mathbb{K}_3$.   In this situation we will show that $\ga(\mathbb{G}_I)\in\mathsf{V}_{\rm fin}(\ga(\mathbb{K}_3))$.  When $\mathbb{G}_I$ is not $3$-colourable, we will show that $\ga(\mathbb{G}_I)\notin\mathsf{V}_{\rm fin}(\ga(\mathbb{K}_3))$, which will complete the proof.}

First let $\mathbb{G}$ be any graph that is isomorphic to an induced subgraph of a power of~$\mathbb{K}_3$ (such as $\mathbb{G}_I$ when it is a YES instance in Corollary \ref{cor:uHK3}).  We show that $\ga(\mathbb{G})\in \mathsf{V}_{\rm fin}(\ga(\mathbb{K}_3))$.  To see this, assume without loss of generality that $\mathbb{G}$ is an induced subgraph of a power $\mathbb{K}_3^\ell$ of $\mathbb{K}_3$.   Then $\ga(\mathbb{G})$ is a quotient of a subalgebra of $\ga(\mathbb{K}_3)^\ell$: simply take the subalgebra of $\ga(\mathbb{K}_3)^\ell$ generated by the vertices of~$\mathbb{G}$ (which are $\ell$-tuples of vertices in $\mathbb{K}_3$), and factor by the ideal consisting of all elements that have a coordinate equal~$\infty$.

For the converse, assume that $\mathbb{G}$ is a finite connected graph \emph{not} {admitting any homomorphism} into $\mathbb{K}_3$ {(that is, is not $3$-colourable)}.  {This includes, for example, the graph $\mathbb{G}_I$ in Corollary \ref{cor:uHK3} in the case that it is a NO instance.}  Let $u_1,\dots,u_n$ be a sequence of vertices encountered in any path through $\mathbb{G}$ that includes each edge at least once in both directions (such paths obviously exist when $\mathbb{G}$ is connected, and moreover can be constructed in polynomial time, by taking any directed Eulerian circuit in the graph, treating the edge relation as a symmetric binary relation; see \cite{vol}).  Treat these vertex names as variables.  Then $\ga(\mathbb{K}_3)$ satisfies $(\dots ((u_1u_2)u_3)\dots u_n)\approx xx$, while $\ga(\mathbb{G})$ fails this law under the trivial interpretation of variables as vertices.  So $\ga(\mathbb{G})$ does not lie in the variety generated by $\ga(\mathbb{K}_3)$, as required.  

(2).  The idea is very similar, though now we can call directly on the construction of Jackson \cite[Section 7.1]{jac:flat}, which produces a $3$-element algebra $\operatorname{ps}(\mathbbold{2}^\pi)$ from the template $\mathbbold{2}$ for positive 1-in-3SAT. \ 
This algebra  has one ternary operation (corresponding to the ternary relation of $\mathbbold{2}$) and two binary operations $\wedge$, $\triangleright$.  
Proposition~7.2 of~\cite{jac:flat} then shows that the membership problem for finite algebras in the variety of $\operatorname{ps}(\mathbbold{2}^\pi)$ is polynomial time equivalent to the membership problem for the universal Horn class of $\mathbbold{2}$, which is \texttt{NP}-complete by Theorem \ref{thm:SP2}.  
The two binary operations $\wedge$ and $\triangleright$ can be replaced by the single ternary operation $(x\wedge y)\triangleright z$, due to satisfaction of the laws $(x\wedge y)\triangleright y\approx x\wedge y$ and $(x\wedge x)\triangleright y\approx x\triangleright y$.\end{proof}

The aforementioned~\cite{JRZ} used the instance $I_4$ from this paper to give another 3-element algebra
\texttt{NP}-hard membership problem for its pseudovariety, this time an additively idempotent semiring, known as ${\bf S}_7$.

\subsection{${\bf B}_2^1$ in other signatures}\label{subsec:B21inverse}
The semigroup ${\bf B}_2^1$ plays a prominent role in several enriched signatures, most prominently in the inverse semigroup signature $\{\cdot,{}^{-1}\}$ (where $0,1,ab,ba$ are fixed by inverse, and $a^{-1}=b$; see Ka\v{d}ourek \cite{kad} for example) and in the additively idempotent semigroup signature $\{+,\cdot\}$ (see Volkov~\cite{vol21} for example), where addition is idempotent, $1+ab=ab$, $1+ba=ba$ and all other distinct sums equal $0$.  
Both these operations arise naturally from the Wagner-Preston representation of ${\bf B}_2^1$ as a semigroup of injective partial maps, as inverse is just the usual inverse of an injective partial map, while $+$ can be interpreted as the intersection of injective partial maps.
When the signature is clear, we re-use the notation ${\bf B}_2^1$ for  both the inverse semigroup signature variant and the semiring variant, but use  notations such as $\langle B_2^1,\cdot,{}^{-1}\rangle$ (the inverse semigroup case) and so on, for disambiguation when necessary.  

The $3$-element semiring ${\bf S}_7$ from~\cite{JRZ} is the subsemiring of~$\langle B_2^1,+,\cdot\rangle$ on the elements $\{1,a,0\}$, and it was shown that the construction (based on the instance~$I_4$ of the current paper) is strong enough to show \texttt{NP}-hardness of membership in the semiring pseudovariety of ${\bf B}_2^1$ as well, by way of a result~\cite[Theorem~4.6]{JRZ} somewhat reminiscent of Theorem~\ref{thm:LDS} of this paper.
It is interesting to note that there are difficulties encountered in attempting to directly extend the construction of Section~\ref{sec:B21} to the operation~$+$, even if the successful construction built for ${\bf S}_7$ is explicitly modelled on the approach here.

In \cite[Remark 4.8]{JRZ} it is noted that the problem of membership in the pseudovariety of ${\bf S}_7$ is \texttt{NP}-complete.  
The argument for completeness given there does not apply to the semiring ${\bf B}_2^1$,  so  the  semiring version of Problem~\ref{prob:B21NP} is open as well.
\begin{problem}\label{prob:B21NPsemiring}
Is the membership problem for finite semirings in the pseudovariety $\mathsf{V}_{\rm fin}(\langle B_2^1,+,\cdot\rangle)$ in \texttt{NP}?
\end{problem}

In the inverse semigroup signature we arrive at a dual state of knowledge in comparison to the semigroup/monoid signature and the semiring signature: there seems to be significant challenges to applying the methods of this paper to establish \texttt{NP}-hardness, but we are instead able to use the work of Ka\v{d}ourek~\cite{kad} to observe the following upper bound to the computational complexity.
\begin{pro}\label{pro:kadourek}
Membership in the pseudovariety $\mathsf{V}_{\rm fin}(\langle B_2^1,\cdot,{}^{-1}\rangle)$ is in \texttt{NP}.
\end{pro}
Before we prove Proposition~\ref{pro:kadourek}, we need to recall some details of the structural characterisation of the countable members of $\mathsf{V}(\langle B_2^1,\cdot,{}^{-1}\rangle)$ in~\cite{kad}.
For the purposes of computational complexity, we consider only finite inverse semigroups.
It is obvious that every inverse semigroup ${\bf S}$ in the inverse semigroup variety of ${\bf B}_2^1$ must have only trivial subgroups, and it follows from basic semigroup structure theory that $\mathscr{D}=\mathscr{J}$ and every $\mathscr{D}$-class of ${\bf S}$ will be essentially like a Brandt semigroup~${\bf B}_n$: if the inverse subsemigroup generated by the $\mathscr{D}$-class of an element is factored by its unique maximum proper ideal, it will be isomorphic to ${\bf B}_n$ for some $n$.  
The set $\mathscr{D}({\bf S})$ of all $\mathscr{D}$-classes of ${\bf S}$ can be ordered by setting $D\leq D'$ if there is an idempotent $e\in D$ and $f\in D'$ with $ef=e$.  We let the principal filter $\{C\in \mathscr{D}({\bf S})\mid D\leq C\}$ of $D$ in this ordered set be denote $D^\uparrow$ (it is denoted $[D)_S$ in~\cite{kad}).

For any $\mathscr{D}$-classes $C,D$ with $D\leq C$, Ka\v{d}ourek defines two kinds of edge relation, $\alpha_{C,D}$ and $\beta_{C,D}$, using the idempotents of $D$ as vertices (see page~311 of~\cite{kad} for precise definition, noting we are adapting the notation slightly, but in an obvious way).  
For such $C,D$, the two edge relations can be computed in polynomial time, though we omit their precise definitions.
Membership of ${\bf S}$ in the pseudovariety of~${\bf B}_2^1$ is then shown (\cite[Theorem~2.3]{kad}) to be equivalent to a condition of the following form:
for every $C,D\in\mathscr{D}({\bf S})$ with 
$D\leq C$, whenever two  idempotents $e,f$ of $D$ and an idempotent $g$ of $C$ have $eg=e$ and $fg\neq f$, then there is a filter $K$ in $D^\uparrow$  that does \emph{not} include $C$, and is such that 
$f$ is not reachable from~$e$ in the graph whose vertices are the idempotents of~$D$, and whose edge relation is formed by taking 
the $\alpha_{C',D}$-edges from all~$C'$ in~$K$ and the $\beta_{C',D}$-edges from all $C'\in D^\uparrow\backslash K$.
From a computational perspective, there are only polynomially many pairs~$D$ and~$C$, and elements $e,f\in D$ and $g\in C$, with $eg=e$ and $fg\neq f$.  
However there may be exponentially many choices of~$K$, and the defined edge relation changes for each, so a simple brute force check for each such~$K$ will not in general be polynomial time.  
\begin{proof}[Proof of Proposition~\ref{pro:kadourek}]
Given a finite, aperiodic inverse semigroup ${\bf S}$ as input, there are only polynomially many triples $((e,f,g),C,D)$ where $C$ and $D$ are $\mathscr{D}$-classes with $D\leq C$, $e,f$ are idempotents in $D$ and $g$ is an idempotent of $C$ with $eg=e$ and $fg\neq f$.    
A certificate for membership in $\mathsf{V}_{\rm fin}(\langle B_2^1,\cdot,{}^{-1}\rangle)$ is then a list of the pairs $(((e,f,g),C,D),K)$, where $K$ is a filter of $D^\uparrow$ not containing $C$ and such that~$e$ and~$f$ lie in different components of the graph whose edges are formed according to Ka\v{d}ourek's construction.
The certificate check first verifies that the given triples $((e,f,g),C,D)$ are a complete enumeration of all such triples, and then for each of these triples, verifies that $K$ is a filter of $D^\uparrow$ avoiding $C$, and that the  edge relation constructed from $K$ has the required nonreachability condition for $e,f$.  
Each of these polynomially many checks can be performed in polynomial time, so the overall check is polynomial time, and \cite[Theorem~2.3]{kad} ensures that such a certificate exists if and only if the input ${\bf S}$ lies in $\mathsf{V}_{\rm fin}(\langle B_2^1,\cdot,{}^{-1}\rangle)$.
\end{proof}

The methods of the present paper do not easily adapt to show \texttt{NP}-hardness.
The instances ${\bf S}_{I_4}$ are motivated by the situation where $I_4$ is a yes instance: in the proof of Lemma~\ref{lem:isinB21}  we identified ${\bf S}_{I_4}$ in this case as a quotient of a subsemigroup ${\bf T}$ of power of~${\bf B}_2^1$, with the quotient corresponding to factoring a maximal ideal $U$.  
If we consider the inverse subsemigroup generated by the underlying set $T$, applications of inverse create a proliferation of new tuples, and managing a canonical construction that mimics this case becomes difficult. 

\begin{problem}\label{prob:kadourek}
Is membership in $\mathsf{V}_{\rm fin}(\langle B_2^1,\cdot,{}^{-1}\rangle)$ \texttt{NP}-complete?
\end{problem}
This problem is also of interest in the inverse semiring signature $\{+,\cdot,{}^{-1}\}$; see Gusev and Volkov~\cite{gusvol} for explorations of the relationship between these various signatures in the context of ${\bf B}_2^1$.  

The following problem might shed some light on Problem~\ref{prob:B21NP}, in view of the proof of Proposition~\ref{pro:kadourek}.
\begin{problem}\label{prob:dualkadourek}
Is there a criterion similar to Ka\v{d}ourek's for membership of finite semigroups in the semigroup pseudovariety $\mathsf{V}_{\rm fin}(\langle B_2^1,\cdot\rangle)$? 
\end{problem}
A semiring version, and an inverse semiring version of this problem is also of interest.
\medskip

\noindent {\bf Acknowledgement.}
The author wishes to thank the referee, whose meticulous reading of the original submission identified many deficiencies; the article has benefitted significantly from the resulting improvements.
The author wishes to thank Prof.~Mikhail Volkov for numerous suggestions including the exploration of the work of 
Ka\v{d}ourek's~\cite{kad} in Subsecton~\ref{subsec:B21inverse}.

\bibliographystyle{amsplain}

\end{document}